 \newcommand{\n}{\mathfrak{n} }
 \newcommand{\m}{\mathfrak{m} }
 \newcommand{\p}{\mathcal{P}}
 \newcommand{\Q}{\mathbb{Q}}
 \newcommand{\Z}{\mathbb{Z}}
  \newcommand{\N}{\mathbb{N}}
 \newcommand{\ov}{\overline}
  \newcommand{\Ass}{\operatorname{Ass}}
  \newcommand{\im}{\operatorname{image}}
 \newcommand{\dm}{\operatorname{dim}}
 \newcommand{\h}{\operatorname{ht}}
 \newcommand{\grade}{\operatorname{grade}}
 \newcommand{\depth}{\operatorname{depth}}
 \newcommand{\vol}{\operatorname{vol}}
  \newcommand{\lm}{\lambda}
 \newtheorem{theorem}{Theorem}[section]
 \newtheorem{corollary}[theorem]{Corollary}
 \newtheorem{lemma}[theorem]{Lemma}
 \newtheorem{proposition}[theorem]{Proposition}
 \newtheorem{conjecture}{Conjecture}
 \theoremstyle{definition}
 \newtheorem{example}[theorem]{Example}
 \theoremstyle{remark}
\begin{document}

\title[ ] {Normal Hilbert Polynomials : A Survey}
\author[]{Mousumi Mandal, Shreedevi Masuti and J. K. Verma}
\keywords{ $I$-admissible filtrations, normal Hilbert polynomial, local
cohomology of Rees algebra,
Ehrhart polynomial of a polytope, positivity conjecture}
\address{Department of Mathematics,Indian Institute of Technology Bombay, Powai,
Mumbai 400076 INDIA}

\email{mousumi@math.iitb.ac.in}
\email{shreedevi@math.iitb.ac.in}
\email{jkv@math.iitb.ac.in}

\maketitle
\begin{abstract} We  survey  some of the  major
results about normal Hilbert polynomials of ideals.
We discuss a formula due to Lipman for complete ideals
in regular local rings of dimension two, theorems of
Huneke, Itoh, Huckaba, Marley and  Rees in Cohen-Macaulay analytically
unramified local rings.
We also discuss recent works of Goto-Hong-Mandal and
Mandal-Singh-Verma concerning the positivity of the first coefficient of the
normal Hilbert polynomial in unmixed analytically unramified local rings.
Results of Moral{\'e}s and Villarreal
linking normal Hilbert polynomial of monomial ideals with Ehrhart polynomials of
polytopes are also presented. 
 \end{abstract}

\thispagestyle{empty}

\tableofcontents

\section*{Introduction}
In this paper we survey results about  normal Hilbert polynomials of ideals
in local Noetherian rings and polynomial rings. We first set-up the notation
and then describe the contents of this paper.   
Let $I$ be an $\m$-primary ideal of a local ring $(R,\m)$ of dimension $d.$ A sequence of
ideals $\mathcal I=\{I_n\}_{n\in \mathbb Z}$ is called an {\bf $I$-admissible 
filtration} if there exists  a $k\in \mathbb N$ such that for all $m,n \in
\mathbb Z$,
 $${\rm (i)}\;\; I_{n+1}\subseteq I_n,
 {\rm (ii)}\;\;  I_m I_n\subseteq I_{m+n} \;\; \text{and}\; {\rm (iii)}\;\;
   I^n\subseteq I_n \subseteq I^{n-k}.$$
   
 Marley in \cite{m} showed that if $\mathcal I$ is an $I$-admissible filtration
then the {\bf Hilbert  function} of $\mathcal I$ defined by  $H_{\mathcal
I}(n)=\lm(R/I_n)$ where $\lm$ denotes length as $R$-module coincides with a
polynomial $P_{\mathcal I}(x)\in \mathbb Q[x]$ of degree $d$ for large $n$. This
polynomial is written as 
 $$P_{\mathcal I}(x)=e_0(\mathcal I){x+d-1\choose d}-e_1(\mathcal{
I}){x+d-2\choose d-1}+\cdots +(-1)^de_d(\mathcal I)$$
and it is called the {\bf Hilbert polynomial of  $\mathcal I$}. The uniquely
determined integers $e_i(\mathcal I)$ for $i=0,1,\ldots ,d$ are called the {\bf
Hilbert coefficients of \;$\mathcal I.$} The coefficient $e_0(\mathcal I)$ is a
positive integer and it is called
the {\bf multiplicity of $\mathcal I.$} The coefficient $e_1(\mathcal I)$ is
called
the {\bf Chern number of $\mathcal I.$ } If $\mathcal I$ is the $I$-adic
filtration, then  $e(I):=e_0(\mathcal I)=e_0(I)$ (resp. $e_1(\mathcal
I):=e_1(I)$) is called the {\bf multiplicity}  (resp. the {\bf Chern number}) of
$I.$

We set $$n(\mathcal I)=\sup \{n\in \mathbb Z\mid H_{\mathcal I}(n)\not=
P_{\mathcal I}(n)\}.$$ 
The integer $n(\mathcal I)$ is called the {\bf postulation number} of $\mathcal
I$.
 For an ideal $I$ in a ring $R$, the {\bf integral closure} of $I$, denoted by
$\ov I$, is the set of elements $x\in R$ such that $x$ satisfies an equation of
the 
form $$x^n+a_1x^{n-1}+\cdots +a_n=0$$ where $a_j\in I^j$ for $1\leq j\leq n$. If
$x\in \ov I$ we say $x$ is  integral over $I$. Note that $\ov I$ is an ideal. 
 An ideal $I$ is said to be {\bf integrally closed} or {\bf complete}  if $\ov
I=I$ and it is said to be {\bf normal} if all its powers are integrally closed.
An ideal $I$ is said to be 
{\bf asymptotically normal} if there exists an integer $N\geq 1$ such that $I^n$
is integrally closed for all $n\geq N$.
A ring $R$ is said to be {\bf analytically unramified} if its $\m$-adic
completion $\hat{R}$ is reduced. Rees in \cite{rees1}  showed that if $R$ is
analytically unramified then the integral closure filtration $\{\ov{I^n}\}$ is
an $I$-admissible filtration. It follows that if $I$ is an $\m$-primary ideal
then the {\bf normal Hilbert function} $\ov H_I(n)=\lm(R/\ov{I^n})$ coincides
with a polynomial $\ov P_I(n)$ of degree $d$ for large $n$. This polynomial is
referred to as the {\bf normal Hilbert polynomial} of $I$ and it is written in
the form
 $$\ov P_I(x)=\ov e_0(I){x+d-1\choose d}-\ov e_1(I){x+d-2\choose d-1}+\cdots
+(-1)^d\ov e_d(I)$$
 where $\ov e_0(I),\ldots ,\ov e_d(I)$ are integers uniquely determined by $I$.
These integers are known as {\bf normal Hilbert coefficients} of $I$. 
The integer $\ov e_1(I)$ is called the {\bf normal Chern number of $I.$} 
The Hilbert series of an $I$-admissible filtration $\mathcal{I}=\{I_n\}$, is
defined as
$$F_{\mathcal I}(t)=\sum_{n\geq 1}\lm(I_{n-1}/I_n)t^{n-1}.$$
Given any filtration $\mathcal I=\{I_n\},$  the Rees algebra, extended Rees
algebra and the associated graded ring of  $\mathcal I$ are defined as   
$$\mathcal R_+(\mathcal I)=\bigoplus_{n\geq 0}I_nt^n,\;\; 
\mathcal R(\mathcal I)=\bigoplus_{n\in\mathbb Z}I_nt^n\;\; 
\text{and}\;\; G(\mathcal
I)=\bigoplus_{n\geq 0}I_n/I_{n+1}$$
respectively.
 For the integral closure filtration $\mathcal I=\{\ov{I^n}\}$ we denote the
Hilbert series $F_{\mathcal I}(t) $ by $\ov F_I(t)$ the Rees algebra by
$\ov{\mathcal R_+ }(I)$, extended Rees algebra by $\ov{\mathcal R }(I)$ and the
associated graded ring
 by $\ov G(I)$. 

A {\bf reduction} of an $I$-admissible filtration $\mathcal I=\{I_n\}$ is an
ideal $J\subseteq I_1$ such that $JI_n=I_{n+1}$ for all large $n$. Equivalently
$J\subseteq I_1$ is a reduction 
of $\mathcal I$ if and only if $\mathcal R(\mathcal I)$ is a finite $\mathcal R
(J)$-module. A {\bf minimal reduction} of $\mathcal I$ is a reduction of
$\mathcal I$ minimal
with respect to inclusion. For a minimal reduction $J$ of $\mathcal I$, we set 
$$r_J(\mathcal I)= \sup \{n\in \mathbb Z \mid I_n\not= JI_{n-1}\}.$$ The 
{\bf reduction  number} $r(\mathcal I)$ of $\mathcal I$ is defined to be the
least $r_J(\mathcal I)$ over all possible minimal reductions $J$ of $\mathcal
I$. If $\mathcal I =\{\ov{I^n}\}$ then we write $r(\mathcal I):=\ov{r}(I).$

The coefficient $e_0(\mathcal I)$ has been studied extensively. We shall not
discuss it in this paper.
Our purpose here is to focus on the other coefficients
about which not much is known. 

In the first section, we discuss results about 
$e_1(\mathcal I)$ for an $I$-admissible filtration.  The principal result is a
theorem of Huckaba and Marley which
gives sharp lower and upper bounds for this coefficient and relates them to the
depth of $G(\mathcal I).$ Using this result, we discuss when $e_1(\mathcal I)$
vanishes. We will also discuss conditions under which  the  normal reduction
number is atmost one or atmost two. Use of Huckaba-Marley Theorem simplifies the
proofs of these results.

In the second section, we discuss a remarkable result due to
Moral\'es, Trung and Villamayor. It answers the question: when is $e_1(I)=\ov
e_1(I) $ for a parameter ideal in an analytically unramified excellent local
domain ? Their result states that it is possible only when $I$ is normal and $R$
is regular. We will see that this is true for  unmixed analytically unramified
local rings.

In the third  section we continue the study of the normal Chern number.  In this
section we sketch the recent solution of the Positivity Conjecture of
Vasconcelos due to 
Goto, Hong and Mandal \cite{ghm}. 

We shall survey main results about the second  Hilbert
coefficient in the fourth section. The most decisive result in this direction is
due to Marley. It states that in a Cohen-Macaulay local ring $e_2(\mathcal I)
\geq 0$ for any $I$-admissible filtration $\mathcal I.$ We discuss this for
$\ov e_2(I)$ using a formula of Blancafort for the difference of Hilbert
polynomial and Hilbert function of an $I$-admissible filtration in terms of
lengths of local cohomology modules of the Rees algebra $\mathcal R(\mathcal I)$
with support in $\bigoplus_{n>0} I_nt^n$. We shall give a new proof of  Itoh's
lower bound 
$\ov e_2(I) \geq  \ov e_1(I) -\lm(\ov I/I)$ and show that
equality holds if and only if $\ov r(I) \leq 2.$ And in this case $\ov G(I)$ is
Cohen-Macaulay.

In the fifth section, we study the third normal Hilbert coefficient.  This 
coefficient is perhaps the most interesting. Marley
constructed an example of a monomial ideal $I$ in the power series ring
$k[[x,y,z]]$ for which $e_3(I) =-1.$ In contrast
to this, Itoh proved that if $R$ is any analytically unramified Cohen-Macaulay
local ring then 
$\ov e_3(I) \geq 0.$ This is proved by using a local cohomological
interpretation of this coefficient. One has to invoke a special case of Itoh's
vanishing theorem : $[H^2_{\mathcal R(I)_+}\ov{\mathcal R}(I)]_n=0$ for all $ n
\leq 0$ 
if $\dim R \geq 3.$ Itoh proved that if $\ov r (I) \leq 2$ and $R$ is
Cohen-Macaulay analytically unramified local ring of dimension at least $3$ then
$\ov e_3(I)=0.$ 

In fact he showed that in this case $\ov G(I)$ is Cohen-Macaulay.  
Itoh conjectured that if $R$ is Gorenstein local ring of dimension at least
three and $\ov e_3(I)=0$ then $\ov r(I) \leq 2.$  We will present his solution
of this conjecture for the 
filtration $\{\ov{I^n}\}$
for parameter ideals $I$ for which $\ov I=\m.$  

The normal Hilbert polynomial of an $\m$-primary ideal 
in a two-dimensional regular local ring $ (R, \m)$ was computed 
by Rees and Lipman. 
In the sixth section, we present a proof using Zariski's theory of complete
ideals for computation of the normal
Hilbert function $\lm (R/ \ov{(I^rJ^s)})$ for $\m$-primary ideals $I$ and $J.$
The formula of Rees and Lipman is a special case of it. This formula also
follows from a formula
of Hoskin and Deligne for $\lm (R/I)$ where $I$ is a  complete $\m$-primary
ideal of $R.$ 

Finally in section seven, we study the normal Hilbert polynomial of a
zero-dimensional monomial ideal  in the polynomial ring $R=k[x_1,x_2, \ldots,
x_n]$ over a field 
$k.$ In this case, this  polynomial is expressed as a difference of two Ehrhart
polynomials 
derived from the exponent vectors of the monomials generating $I.$

\section{Study of $ e_1(\mathcal I)$}
In this section we discuss results about the first normal Hilbert coefficient 
$\ov e_1(I)$. The first general result was proved by  T. Marley in \cite{m}.
\begin{theorem}{\rm \cite[Lemma 3.19]{m}}\label{M}
 Let $(R,\m)$ be a  local ring and let $I$ be an $\m$-primary ideal. Let
$\mathcal I=\{I_n\}$ be an $I$-admissible filtration. Then $$e_1(\mathcal I)\geq
e_1(I).$$
In particular if $R$ is an analytically unramified Cohen-Macaulay local ring
then  $\ov e_1(I)\geq 0.$
\end{theorem}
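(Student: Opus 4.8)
The plan is to compare the Hilbert polynomial of the filtration $\mathcal I = \{I_n\}$ with that of the $I$-adic filtration by exploiting the inclusions $I^n \subseteq I_n$ built into the definition of an $I$-admissible filtration, together with the key fact that both filtrations have the same leading term (i.e.\ $e_0(\mathcal I) = e_0(I)$). Recall that for any $n$ one has $\lm(R/I_n) \leq \lm(R/I^n)$ since $I^n \subseteq I_n$, so $P_{\mathcal I}(n) \leq P_I(n)$ for all large $n$. Writing out both polynomials in binomial form and using that their degree-$d$ terms agree, the difference $P_I(x) - P_{\mathcal I}(x)$ is a polynomial of degree at most $d-1$ whose leading coefficient, after normalising the binomials, is $(e_1(\mathcal I) - e_1(I))\binom{x+d-2}{d-1} + (\text{lower order})$. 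Since $P_I(n) - P_{\mathcal I}(n) \geq 0$ for all $n \gg 0$ and $\binom{n+d-2}{d-1} > 0$ grows without bound, the sign of this eventually-nonnegative polynomial forces its leading coefficient to be nonnegative, i.e.\ $e_1(\mathcal I) - e_1(I) \geq 0$. This gives $e_1(\mathcal I) \geq e_1(I)$.

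For the ``in particular'' clause, apply the inequality with $\mathcal I = \{\ov{I^n}\}$, the integral closure filtration, which is $I$-admissible by Rees's theorem (as $R$ is analytically unramified) — this is exactly the setup recorded in the excerpt. Thus $\ov e_1(I) = e_1(\{\ov{I^n}\}) \geq e_1(I)$. It remains to observe that $e_1(I) \geq 0$ when $R$ is Cohen-Macaulay. This is classical: choosing a minimal reduction $J$ of $I$ generated by a system of parameters (after passing to an infinite residue field, which changes nothing), one has $\lm(R/I^n) \leq \lm(R/J^n)$, and since $J$ is generated by a regular sequence, $\lm(R/J^n) = e_0(I)\binom{n+d-1}{d}$ has no $e_1$-term; comparing Hilbert polynomials as above yields $e_1(I) \geq 0$. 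Combining, $\ov e_1(I) \geq e_1(I) \geq 0$.

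The only genuinely delicate point is justifying $e_0(\mathcal I) = e_0(I)$, which underlies the degree reduction in the first paragraph. This follows from condition (iii) in the definition of an $I$-admissible filtration: the sandwich $I^n \subseteq I_n \subseteq I^{n-k}$ gives $\lm(R/I^n) \geq \lm(R/I_n) \geq \lm(R/I^{n-k})$, and since $\lm(R/I^{n-k}) = e_0(I)\binom{n-k+d-1}{d} + O(n^{d-1})$ has the same leading coefficient $e_0(I)/d!$ as $\lm(R/I^n)$, the squeezed function $\lm(R/I_n)$ must too; hence $e_0(\mathcal I) = e_0(I)$. With this in hand the rest is the elementary ``a polynomial eventually of one sign has leading coefficient of that sign'' argument, applied twice.
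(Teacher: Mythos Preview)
Your proof is correct and shares the paper's core idea of comparing $P_{\mathcal I}$ and $P_I$ via the inclusion $I^n \subseteq I_n$ together with $e_0(\mathcal I) = e_0(I)$. The execution differs in two minor respects. First, the paper passes to an infinite residue field and reduces to dimension $1$ using superficial elements, so the polynomial inequality becomes simply $e_0 n - e_1(\mathcal I) \leq e_0 n - e_1(I)$; you instead argue directly in dimension $d$ by reading off the leading coefficient of the degree-$(d-1)$ difference $P_I - P_{\mathcal I}$. Your route is slightly more self-contained since it avoids the superficial-element machinery. Second, for the ``in particular'' clause the paper notes that if $J$ is a minimal reduction of $I$ then $\ov{I^n} = \ov{J^n}$ for all $n$, whence $\ov e_1(I) = \ov e_1(J) \geq e_1(J) = 0$; you instead chain $\ov e_1(I) \geq e_1(I) \geq 0$, which requires the additional (classical) step of proving $e_1(I) \geq 0$ in a Cohen--Macaulay ring. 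The paper's detour through $J$ is marginally cleaner because $e_1(J) = 0$ is immediate for a parameter ideal, but both arguments are short and valid.
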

\begin{proof}
We may assume the residue field $R/\m$ is infinite and then by using superficial
elements \cite[section 8.5] {hs} we can reduce to the case $\dm R = 1$. Since
$I^n\subseteq I_n$ for  all $n$, $P_{\mathcal I}(n)\leq P_I(n)$ for $n$
sufficiently large. Then
$$e_0(\mathcal I)n-e_1(\mathcal I)\leq e_0(I)n-e_1(I)$$
for $n$ sufficiently large. Since $e_0(\mathcal I)=e_0(I)$, so $e_1(\mathcal
I)\geq e_1(I)$.
Now let $R$ be an analytically unramified Cohen-Macaulay local ring. Let $J$ be
a minimal reduction of $I.$ Then
$\ov {I^n}=\ov {J^n}$ for all $n.$  Thus $$\ov e_1(I)=\ov e_1(J)\geq e_1(J)=0.$$
\end{proof}

Huckaba and Marley \cite{hm} have given lower and upper bounds for the 
Chern number of an $I$-admissible filtration in a Cohen-Macaulay local ring. 

\begin{theorem}[Huckaba-Marley Theorem] {\rm \cite[Theorem 4.7]{hm}}
\label{HM}
  Let $(R,\m)$ be a $d$-dimensional Cohen-Macaulay local ring and let $I$ be an
$\m$-primary ideal. Let $\mathcal I=\{I_n\}$ be an $I$-admissible filtration and
$J$ be a minimal reduction of $\mathcal I$. Then
 \begin{enumerate}
    \item[{\rm (1)}] $\displaystyle{\sum_{n\geq 1}\lm((I_n,J)/J)} \leq
e_1(\mathcal I)\leq \displaystyle{\sum_{n\geq 1}\lm(I_n/JI_{n-1})}$. 
    \item[{\rm (2)}]  $e_1(\mathcal I)=\displaystyle{\sum_{n\geq
1}\lm((I_n,J)/J)}  $ if and only if  $G(\mathcal I)$ is Cohen-Macaulay.
    \item [{\rm (3)}] $e_1(\mathcal I)= \displaystyle{\sum_{n\geq
1}\lm(I_n/JI_{n-1})}$ if and only if $\depth G(\mathcal I)\geq d-1$.
   \end{enumerate}
 \end{theorem}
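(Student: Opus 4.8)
The plan is to prove (1) by induction on $d=\dm R$, and then to obtain (2) and (3) by locating exactly where, in that induction, the inequalities become equalities. One first reduces to the case $R/\m$ infinite by the faithfully flat base change $R\to R[X]_{\m R[X]}$, which affects neither the integers $e_i(\mathcal I)$, nor the lengths appearing in the statement, nor $\depth G(\mathcal I)$. So we may take $J=(a_1,\ldots,a_d)$ generated by a system of parameters, which, $R$ being \CM, is a regular sequence. Note that for $n>r_J(\mathcal I)$ one has $I_n=JI_{n-1}\subseteq J$, so both sums in (1) are finite, equal to the corresponding sums over $1\le n\le r_J(\mathcal I)$.

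For $d=1$, write $J=(a)$ with $a$ a nonzerodivisor and set $r=r_J(\mathcal I)$, so that $I_n=a^{n-r}I_r$ for $n\ge r$. Since $R$ is one-dimensional \CM\ and $(a)$ is a minimal reduction, $\lm(R/a^nR)=n\,e_0(\mathcal I)$, and for $n\gg0$
$$e_1(\mathcal I)=\lm(R/a^nR)-\lm(R/I_n)=\lm(I_n/a^nR)=\lm(I_r/a^rR),$$
the last equality because multiplication by $a^{n-r}$ is a length-preserving bijection $I_r/a^rR\to I_n/a^nR$. Filtering $I_r/a^rR$ by
$$a^rR=a^rI_0\subseteq a^{r-1}I_1\subseteq\cdots\subseteq aI_{r-1}\subseteq I_r$$
and noting that multiplication by $a^{r-1-n}$ identifies the quotient $a^{r-1-n}I_{n+1}/a^{r-n}I_n$ with $I_{n+1}/aI_n$ (again because $a$ is regular), we obtain $e_1(\mathcal I)=\sum_{n\ge1}\lm(I_n/aI_{n-1})$ --- so the upper bound in (1) is automatically an equality when $d=1$, consistent with (3) since $\depth G(\mathcal I)\ge0=d-1$. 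Since $aI_{n-1}\subseteq I_n\cap aR$, one has $\lm(I_n/aI_{n-1})\ge\lm\big(I_n/(I_n\cap aR)\big)=\lm\big((I_n,J)/J\big)$, which gives the lower bound, with equality if and only if $I_n\cap aR=aI_{n-1}$ for all $n$ --- the Valabrega--Valla condition, equivalent for $d=1$ to $a^*\in G(\mathcal I)_1$ being a nonzerodivisor, i.e.\ to $G(\mathcal I)$ being \CM. This proves (1)--(3) when $d=1$.

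For the inductive step, $d\ge2$, choose $a=a_1\in I_1$ superficial for $\mathcal I$ and part of a minimal reduction $J=(a,a_2,\ldots,a_d)$. Then $a$ is a nonzerodivisor, $R':=R/aR$ is \CM\ of dimension $d-1$, the induced filtration $\mathcal I':=\{(I_n+aR)/aR\}$ is admissible over $R'$, and $J':=JR'$ (generated by the images of $a_2,\ldots,a_d$) is a minimal reduction of $\mathcal I'$. The induction is driven by three facts: (i) superficiality gives $I_n\cap aR=aI_{n-1}$ for $n\gg0$, hence $H_{\mathcal I'}(n)=H_{\mathcal I}(n)-H_{\mathcal I}(n-1)$ for $n\gg0$, so $P_{\mathcal I'}$ is the first difference of $P_{\mathcal I}$ and, since $d\ge2$, $e_1(\mathcal I')=e_1(\mathcal I)$; (ii) because $a\in J$ one has $(I_n',J')/J'\cong(I_n+J+aR)/(J+aR)=(I_n+J)/J$ for \emph{every} $n$, so $\sum_{n\ge1}\lm\big((I_n',J')/J'\big)=\sum_{n\ge1}\lm\big((I_n,J)/J\big)$; (iii) the natural surjection $I_n/JI_{n-1}\twoheadrightarrow I_n'/J'I_{n-1}'\cong(I_n+aR)/(JI_{n-1}+aR)$ gives $\sum_{n\ge1}\lm\big(I_n'/J'I_{n-1}'\big)\le\sum_{n\ge1}\lm\big(I_n/JI_{n-1}\big)$. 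Feeding the inductive hypothesis for $(R',\mathcal I',J')$ through (i)--(iii) yields
$$\sum_{n\ge1}\lm\big((I_n,J)/J\big)=\sum_{n\ge1}\lm\big((I_n',J')/J'\big)\le e_1(\mathcal I')=e_1(\mathcal I)\le\sum_{n\ge1}\lm\big(I_n'/J'I_{n-1}'\big)\le\sum_{n\ge1}\lm\big(I_n/JI_{n-1}\big),$$
which proves (1).

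It remains to decide when these inequalities are tight. On the lower side the only slack is the inductive inequality $\sum\lm((I_n',J')/J')\le e_1(\mathcal I')$, so $e_1(\mathcal I)=\sum_{n\ge1}\lm((I_n,J)/J)$ if and only if $G(\mathcal I')$ is \CM; on the upper side there are two sources of slack, namely the inductive equality $e_1(\mathcal I')=\sum\lm(I_n'/J'I_{n-1}')$ (i.e.\ $\depth G(\mathcal I')\ge d-2$) and the surjections in (iii) being isomorphisms, which amounts to $aR\cap I_n\subseteq JI_{n-1}$ for all $n$. Converting these into the clean statements (2) and (3) is the heart of the matter: one uses the Valabrega--Valla criterion --- $a_1^*,\ldots,a_d^*$ is a $G(\mathcal I)$-regular sequence $\iff J\cap I_n=JI_{n-1}$ for all $n$ $\iff G(\mathcal I)$ is \CM --- together with the fact that a superficial $a$ with $\depth G(\mathcal I)\ge1$ makes $a^*$ a nonzerodivisor on $G(\mathcal I)$, whence $G(\mathcal I)/a^*G(\mathcal I)\cong G(\mathcal I')$ and $\depth G(\mathcal I)=\depth G(\mathcal I')+1$, and --- in the reverse direction --- the fact that the displayed equalities, combined with the inductive data, force the full Valabrega--Valla condition rather than merely its image modulo $a$. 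This last point --- controlling how $\depth G(\mathcal I)$ and the relation $aR\cap I_n=aI_{n-1}$ propagate under reduction by a superficial element in both directions --- is the step I expect to be the main obstacle; the bookkeeping behind (1) is comparatively routine once the one-dimensional case is settled.
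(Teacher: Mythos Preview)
The paper does not actually prove this theorem: it is a survey, and the Huckaba--Marley Theorem is stated with a citation to \cite[Theorem 4.7]{hm} (and the remark that a module-theoretic version appears in \cite{rv}), after which the authors proceed directly to consequences. So there is no ``paper's own proof'' to compare against.

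That said, your outline is the standard route and is essentially how Huckaba and Marley argue in \cite{hm}: reduce to infinite residue field, settle $d=1$ by the telescoping filtration $a^rR\subseteq a^{r-1}I_1\subseteq\cdots\subseteq I_r$, and for $d\ge2$ pass to $R'=R/(a)$ via a superficial element to carry $e_1$ and the two sums through unchanged or with the right inequality. Your items (i)--(iii) and the resulting chain of inequalities for (1) are correct.

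The part you flag as ``the main obstacle'' is indeed the nontrivial content of (2) and (3), and your sketch there is a bit vague. The missing named ingredient is what is usually called \emph{Sally's machine} (see \cite[Lemma 2.2]{hm} or \cite{rv}): if $a$ is superficial for $\mathcal I$ and $\depth G(\mathcal I')\ge 1$, then $a^*$ is $G(\mathcal I)$-regular, so $G(\mathcal I)/a^*G(\mathcal I)\cong G(\mathcal I')$ and $\depth G(\mathcal I)=\depth G(\mathcal I')+1$. This is exactly what lets you go \emph{up} from ``$G(\mathcal I')$ is Cohen--Macaulay'' to ``$G(\mathcal I)$ is Cohen--Macaulay'' in (2), and from $\depth G(\mathcal I')\ge d-2$ to $\depth G(\mathcal I)\ge d-1$ in (3). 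Without invoking it explicitly, the converse implications in (2) and (3) do not follow from the bookkeeping you have written; once you cite it, the argument closes. Note also that for (3) you must separately handle the possibility $\depth G(\mathcal I')=0$ (i.e.\ $d=2$): there Sally's machine does not apply directly, and one instead checks that the equality in (iii) forces $aR\cap I_n=aI_{n-1}$ for all $n$, i.e.\ $a^*$ is regular, giving $\depth G(\mathcal I)\ge 1=d-1$.
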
 
  
This theorem has been proved for modules in \cite[Theorem 2.5, 2.7]{rv}.
Before we describe consequences of the Huckaba-Marley Theorem,  we state the
Valabrega-Valla criterion for Cohen-Macaulayness of $G(\mathcal{ I})$ for an
$I$-admissible filtration $\mathcal I$. This plays a fundamental role in the
study of Hilbert polynomials.
 \begin{theorem} [Valabrega-Valla, \cite{vv}]\label{vv}
 Let $(R,\m)$ be a $d$-dimensional Cohen-Macaulay local ring and let $I$ be an
$\m$-primary ideal. Let $\mathcal{I}=\{ {I_n}\}$ be an $I$-admissible
filtration. Let 
 $J=(x_1,\ldots ,x_d)$ be a minimal reduction of $\mathcal{I}$ and $\underbar
x^*=x_1^*,\ldots ,x_d^*$ be their images in $I_1/I_2$. Then $\underbar x^*$  is
a regular
 sequence if and only if $J\cap I_n=JI_{n-1}$ for all $n\geq 1$.
 \end{theorem}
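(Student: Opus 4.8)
The plan is to derive this from the following more general statement: let $\mathcal I=\{I_n\}$ be a decreasing multiplicative filtration with $I_0=R$ and $\bigcap_nI_n=0$, and let $x_1,\ldots,x_d\in I_1$ be a regular sequence on $R$; then the following are equivalent:
\begin{enumerate}
\item[{\rm (A)}] $x_1^*,\ldots,x_d^*$ is a regular sequence on $G(\mathcal I)$;
\item[{\rm (B)}] $(x_1,\ldots,x_i)\cap I_n=(x_1,\ldots,x_i)I_{n-1}$ for all $0\le i\le d$ and all $n\ge 1$;
\item[{\rm (C)}] $J\cap I_n=JI_{n-1}$ for all $n\ge 1$.
\end{enumerate}
All the hypotheses hold in the theorem: $\bigcap_nI_n=0$ because $I^n\subseteq I_n$ with $I$ an $\m$-primary ideal, $J\subseteq I_1$ because $J$ is a reduction, and $x_1,\ldots,x_d$ is a regular sequence because $J$, being a minimal reduction of $\mathcal I$, is a parameter ideal of the Cohen--Macaulay ring $R$. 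Since $\bigcap_nI_n=0$, every nonzero $a\in R$ has an order $\ord(a)=\max\{m:a\in I_m\}$ and a nonzero leading form $a^*\in[G(\mathcal I)]_{\ord(a)}$, which I use freely; I also set $I_n=R$ for $n\le 0$.

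The engine is the one-element case: for $x\in I_1$, the leading form $x^*\in[G(\mathcal I)]_1$ is a nonzerodivisor on $G(\mathcal I)$ if and only if $(I_n:x)=I_{n-1}$ for all $n\ge 1$. Indeed $x^*$ annihilates $a^*\in[G(\mathcal I)]_m$ exactly when $xa\in I_{m+2}$, so $x^*$ is a nonzerodivisor if and only if $(I_{m+2}:x)\cap I_m=I_{m+1}$ for all $m\ge 0$, and a short induction --- for fixed $n$, show $(I_n:x)\subseteq I_j$ successively for $j=0,1,\ldots,n-1$ --- sharpens this to $(I_n:x)=I_{n-1}$ for all $n$. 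When $x$ is also a nonzerodivisor on $R$, this reads $(x)\cap I_n=xI_{n-1}$ for all $n$, and then one has the standard graded isomorphism $G(\mathcal I)/(x^*)\cong G(\mathcal I')$, where $\mathcal I'=\{(I_n+xR)/xR\}$ is the induced filtration on $R'=R/xR$; this comes from $I_n\cap(xR+I_{n+1})=(I_n\cap xR)+I_{n+1}=xI_{n-1}+I_{n+1}$ by the modular law. The hypotheses of the general statement pass to $R'$, $\mathcal I'$ and the images $x_2',\ldots,x_d'$.

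Given this, ${\rm (A)}\Leftrightarrow{\rm (B)}$ follows by induction on $d$, peeling $x_1^*$ off from the bottom. If (A) holds then $x_1^*$ is a nonzerodivisor, so the $i=1$ instance of (B) holds and $G(\mathcal I)/(x_1^*)\cong G(\mathcal I')$, on which the leading forms of $x_2',\ldots,x_d'$ form a regular sequence; the induction hypothesis over $R'$ then supplies the intermediate formulas for $\mathcal I'$, and they lift to $R$ because $x_1$ is a nonzerodivisor on $R$ and $(x_1)\cap I_n=x_1I_{n-1}$ for all $n$. If (B) holds, its $i=1$ case makes $x_1^*$ a nonzerodivisor, (B) descends to $\mathcal I'$, the induction hypothesis makes the leading forms of $x_2',\ldots,x_d'$ a regular sequence on $G(\mathcal I')=G(\mathcal I)/(x_1^*)$, and hence $x_1^*,\ldots,x_d^*$ is a regular sequence on $G(\mathcal I)$. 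Lastly, ${\rm (B)}\Rightarrow{\rm (C)}$ is just the case $i=d$.

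The one implication that genuinely has to be carried out, and the main obstacle, is ${\rm (C)}\Rightarrow{\rm (B)}$: one must amplify the single equality $J\cap I_n=JI_{n-1}$ into all of (B). I would do this by a double induction: an outer induction on $n$ (the base $n\le 1$ being immediate, since then $(x_1,\ldots,x_i)\cap I_n=(x_1,\ldots,x_i)=(x_1,\ldots,x_i)I_{n-1}$), and for each fixed $n$ a downward induction on $i$ starting from $i=d$, where the equality is the hypothesis. In the step from $i+1$ to $i$: given $z\in(x_1,\ldots,x_i)\cap I_n$, one has $z\in(x_1,\ldots,x_{i+1})\cap I_n=(x_1,\ldots,x_{i+1})I_{n-1}$, say $z=\sum_{j=1}^{i+1}a_jx_j$ with $a_j\in I_{n-1}$; comparing with an expression $z=\sum_{j=1}^{i}b_jx_j$ gives $a_{i+1}x_{i+1}\in(x_1,\ldots,x_i)$, whence $a_{i+1}\in(x_1,\ldots,x_i)$ since $x_{i+1}$ is a nonzerodivisor modulo $(x_1,\ldots,x_i)$; therefore $a_{i+1}\in(x_1,\ldots,x_i)\cap I_{n-1}=(x_1,\ldots,x_i)I_{n-2}$ by the outer hypothesis at level $n-1$, say $a_{i+1}=\sum_{j\le i}c_jx_j$ with $c_j\in I_{n-2}$, and substituting gives $z=\sum_{j\le i}(a_j+c_jx_{i+1})x_j$ with all coefficients in $I_{n-1}$. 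The delicate point is only organizing the induction so that the level-$(n-1)$ conclusion is available for all $i$ before treating level $n$; once that is set up, the argument closes.
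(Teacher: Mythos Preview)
The paper does not supply its own proof of this theorem; it is stated with a citation to Valabrega--Valla \cite{vv} and then used as a tool. So there is nothing in the paper to compare against. Your plan is essentially the classical argument from the cited source, and it is correct: the one-element engine, the graded isomorphism $G(\mathcal I)/(x_1^*)\cong G(\mathcal I')$, the induction for (A)$\Leftrightarrow$(B), and---the genuine content---the double induction for (C)$\Rightarrow$(B) all go through as you describe. In the inner step you correctly use that $x_{i+1}$ is regular modulo $(x_1,\ldots,x_i)$ to get $a_{i+1}\in(x_1,\ldots,x_i)\cap I_{n-1}$, then invoke the outer hypothesis at level $n-1$; the bookkeeping is right.

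One small cleanup: you introduce $\bigcap_n I_n=0$ to define $\ord(a)$ and leading forms, but your arguments never actually use this. The engine works directly with homogeneous components $I_m/I_{m+1}$, and the ``short induction'' sharpening $(I_{m+2}:x)\cap I_m=I_{m+1}$ to $(I_n:x)=I_{n-1}$ only needs $I_0=R$. This matters because when you pass to $R'=R/x_1R$, separatedness of $\mathcal I'$ does not follow from that of $\mathcal I$ alone; in the theorem's $I$-admissible setting it does hold (via $I_n'\subseteq (IR')^{n-k}$ and Krull intersection in the local ring $R'$), but your general statement as phrased would not be self-reproducing under the induction. Either drop the hypothesis from your general statement, or note explicitly that in the admissible/Noetherian-local context it descends.
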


The next result of Marley shows the consequences of the vanishing of the Chern
number of an $I$-admissible filtration in Cohen-Macaulay local ring.
\begin{corollary}{\rm \cite[Theorem 3.21]{m}}
 Let $(R,\m)$ be a $d$-dimensional Cohen-Macaulay local ring and $I$ be an
$\m$-primary ideal. Let $\mathcal I=\{I_n\}$ be an $I$-admissible filtration.
Then the following are equivalent:
 \begin{enumerate}
  \item[{\rm (1)}] $I$ is generated by a system of parameters and $\mathcal
I=\{I^n\}$,
  \item[{\rm (2)}] $e_1(\mathcal I)=\cdots =e_d(\mathcal I)=0$,
  \item[{\rm (3)}] $e_1(\mathcal I)=0$.
 \end{enumerate}

\end{corollary}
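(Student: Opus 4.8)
The plan is to prove the cycle $(1)\Rightarrow(2)\Rightarrow(3)\Rightarrow(1)$, with $(3)\Rightarrow(1)$ carrying essentially all the content; there the Huckaba--Marley Theorem \ref{HM} does most of the work. First I would reduce to the case $R/\m$ infinite by the usual device of replacing $R$ with $R(X)=R[X]_{\m R[X]}$: this faithfully flat base change preserves Cohen--Macaulayness, the property of being an $I$-admissible filtration, every coefficient $e_i(\mathcal I)$, and the validity of $(1)$ (since $I_nR(X)=I^nR(X)$ for all $n$ forces $I_n=I^n$ for all $n$ by faithful flatness). For $(1)\Rightarrow(2)$: if $I=(x_1,\dots,x_d)$ is a parameter ideal and $\mathcal I=\{I^n\}$, then $x_1,\dots,x_d$ is a regular sequence in the Cohen--Macaulay ring $R$, so $G(\mathcal I)=\bigoplus_{n\ge 0}I^n/I^{n+1}$ is the polynomial ring $(R/I)[T_1,\dots,T_d]$, whence $\lm(I^j/I^{j+1})=\lm(R/I){j+d-1\choose d-1}$ and $H_{\mathcal I}(n)=\lm(R/I){n+d-1\choose d}$ for every $n\ge 0$. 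Thus $P_{\mathcal I}(x)=e_0(I){x+d-1\choose d}$, forcing $e_1(\mathcal I)=\cdots=e_d(\mathcal I)=0$. The implication $(2)\Rightarrow(3)$ is trivial.

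For $(3)\Rightarrow(1)$ I would fix a minimal reduction $J$ of $\mathcal I$. Since $\mathcal I$ is an $\m$-primary filtration its analytic spread is $d$, so $J$ is minimally generated by $d$ elements, i.e. by a system of parameters, hence --- as $R$ is Cohen--Macaulay --- by a regular sequence; in particular $e_0(J)=\lm(R/J)$. Part $(1)$ of Theorem \ref{HM} gives $0=e_1(\mathcal I)\ge\sum_{n\ge 1}\lm((I_n,J)/J)\ge 0$, so $(I_n,J)=J$, i.e. $I_n\subseteq J$, for every $n\ge 1$; with $J\subseteq I_1$ this gives $I_1=J$. Since equality now holds in the lower bound, part $(2)$ of Theorem \ref{HM} shows $G(\mathcal I)$ is Cohen--Macaulay, so $\depth G(\mathcal I)=d\ge d-1$, and then part $(3)$ forces $e_1(\mathcal I)=\sum_{n\ge 1}\lm(I_n/JI_{n-1})$, which being $0$ gives $I_n=JI_{n-1}$ for all $n\ge 1$. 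Inductively (with $I_0=R$), $I_n=J^n$ for all $n\ge 0$, so $\mathcal I$ is the $J$-adic filtration.

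It then remains to show $J=I$. Because $\mathcal I=\{J^n\}$ is $I$-admissible, comparing leading terms in $I^n\subseteq J^n\subseteq I^{n-k}$ yields $\lm(R/J)=e_0(\mathcal I)=e_0(I)$. On the other hand, for any $\m$-primary ideal $I$ in the Cohen--Macaulay ring $R$ one has $\lm(R/I)\le e_0(I)$: a minimal reduction $J'\subseteq I$ gives $e_0(I)=e_0(J')=\lm(R/J')\ge\lm(R/I)$. Hence $\lm(R/I)\le e_0(I)=\lm(R/J)$, whereas $I\subseteq I_1=J$ gives $\lm(R/J)\le\lm(R/I)$; therefore $\lm(R/I)=\lm(R/J)$, and since $I\subseteq J$ we conclude $I=J$. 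So $I$ is generated by a system of parameters and $I_n=J^n=I^n$ for all $n$, i.e. $\mathcal I=\{I^n\}$, which is $(1)$.

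The main thing to get right is the order in which the three parts of Theorem \ref{HM} are invoked --- first to force $I_n\subseteq J$, then to deduce that $G(\mathcal I)$ is Cohen--Macaulay, then to use the complementary bound to force $I_n=JI_{n-1}$ --- followed by the colength inequality $\lm(R/I)\le e_0(I)$ that pins down $I=J=I_1$; one should also not forget the reduction to an infinite residue field, which is what guarantees that a minimal reduction of $\mathcal I$ is generated by exactly $d$ elements. As an alternative to the last use of Theorem \ref{HM}(3), once $G(\mathcal I)$ is known to be Cohen--Macaulay one may apply the Valabrega--Valla criterion \ref{vv} to $J$ to obtain $J\cap I_n=JI_{n-1}$ for all $n\ge 1$, which together with $I_n\subseteq J$ again gives $I_n=JI_{n-1}$ and hence $\mathcal I=\{I^n\}$.
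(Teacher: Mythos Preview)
Your proof is correct and follows essentially the same line as the paper: use the lower bound in Theorem~\ref{HM} to force $I_n\subseteq J$ and $G(\mathcal I)$ Cohen--Macaulay, then conclude $I_n=JI_{n-1}$ and hence $I_n=J^n$. The paper reaches $I_n=JI_{n-1}$ via Valabrega--Valla (your stated alternative) rather than via Theorem~\ref{HM}(3), and it stops at $I_n=J^n$ without spelling out why $I=J$; your colength argument $\lm(R/I)\le e_0(I)=\lm(R/J)\le\lm(R/I)$ supplies that missing sentence, and your explicit reduction to infinite residue field is likewise a detail the paper leaves implicit.
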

\begin{proof}
 $ (1) \Rightarrow (2):$ Since $R$ is Cohen-Macaulay, $I$ is generated by a
regular sequence, $G(I)\simeq R/I[x_1,x_2, \ldots, x_d]$ where $x_1, x_2,
\ldots, x_d$ are indeterminates. Therefore for all $n \geq 1,$
$$P_I(n)=\lambda(R/I)\binom{n+d-1}{d}.$$
Hence $e_1(I)=e_2(I)=\cdots=e_d(I)=0.$\\
 $ (2) \Rightarrow (3):$ This is clear.\\
 $ (3) \Rightarrow (1):$ Let $e_1(\mathcal I)=0.$ Then
by Theorem \ref{HM}, $G(\mathcal I)$ is Cohen-Macaulay and
$I_n \subset J$ for all $n.$ By Theorem \ref {vv},
$I_n \cap J=JI_{n-1}=I_n$ for all $n \geq 1.$ This
gives $I_n=J^n$ for all $n \geq 1.$
\end{proof}

As a consequence of the above result  in an analytically unramified
Cohen-Macaulay local ring if $\ov e_1(I)=0$ then $\ov {I^n}=I^n$ for all $n,$
  $\ov e_j(I)=0$ for $j\geq 1$ and $\ov r(I)=0.$

  \begin{corollary}{\rm \cite[Corollary 4.9]{hm}}\label{s4}
  Let $(R,\m)$ be a $d$-dimensional Cohen-Macaulay local ring with $R/\m$
infinite and let $I$ be an $\m$-primary ideal. Let $\mathcal I=\{I_n\}$ be an
$I$-admissible filtration. Then
   $\lm(R/I_1)\geq e_0(\mathcal I)-e_1(\mathcal I)$ and equality holds if and
only if $r(\mathcal I)\leq 1$.
  
  \end{corollary}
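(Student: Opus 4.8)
The plan is to apply the Huckaba--Marley Theorem (Theorem \ref{HM}) directly, using the lower bound in part (1) together with the characterization of the upper bound in part (3), plus the Valabrega--Valla criterion (Theorem \ref{vv}). First I would fix a minimal reduction $J=(x_1,\ldots,x_d)$ of $\mathcal I$, which exists since $R/\m$ is infinite. Because $R$ is Cohen--Macaulay and $J$ is generated by a system of parameters, $\lm(R/J)=e_0(\mathcal I)$ (the multiplicity is computed by any minimal reduction). Now consider the short exact sequence $0\to I_1/J\cap I_1 \to R/J\cap I_1 \to R/I_1 \to 0$ together with $J\cap I_1 \subseteq J$; more directly, from $J\subseteq I_1$ one gets $\lm(R/I_1)=\lm(R/J)-\lm(I_1/J)=e_0(\mathcal I)-\lm((I_1,J)/J)$. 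By part (1) of Theorem \ref{HM},
$$\lm((I_1,J)/J) \leq \sum_{n\geq 1}\lm((I_n,J)/J) \leq e_1(\mathcal I),$$
and substituting this into the displayed equality yields $\lm(R/I_1)=e_0(\mathcal I)-\lm((I_1,J)/J)\geq e_0(\mathcal I)-e_1(\mathcal I)$. This establishes the inequality.

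For the equality case, I would argue that $\lm(R/I_1)=e_0(\mathcal I)-e_1(\mathcal I)$ forces $\lm((I_1,J)/J)=e_1(\mathcal I)$, which by the sandwich above means $\lm((I_n,J)/J)=0$ for all $n\geq 2$, i.e.\ $I_n\subseteq J$ for all $n\geq 2$, and also that equality holds in $\sum_{n\geq 1}\lm((I_n,J)/J)=e_1(\mathcal I)$. By part (2) of Theorem \ref{HM}, this last equality is equivalent to $G(\mathcal I)$ being Cohen--Macaulay. Then the Valabrega--Valla criterion (Theorem \ref{vv}) applies: the images $x_1^*,\ldots,x_d^*$ in $I_1/I_2$ form a regular sequence, hence $J\cap I_n=JI_{n-1}$ for all $n\geq 1$. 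Combining this with $I_n\subseteq J$ for $n\geq 2$ gives $I_n=J\cap I_n=JI_{n-1}$ for all $n\geq 2$, so $r_J(\mathcal I)\leq 1$ and therefore $r(\mathcal I)\leq 1$.

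Conversely, suppose $r(\mathcal I)\leq 1$, so there is a minimal reduction $J$ with $I_n=JI_{n-1}$ for all $n\geq 2$. Then $G(\mathcal I)$ is generated in degree $\leq 1$ over $R/I_1[x_1^*,\ldots,x_d^*]$ in a way that makes it a homomorphic image of a polynomial extension; more to the point, the equality $I_n=JI_{n-1}$ for $n\geq 2$ forces both bounds in Theorem \ref{HM}(1) to collapse. Indeed the upper bound becomes $\sum_{n\geq 1}\lm(I_n/JI_{n-1})=\lm(I_1/J)$, and one checks that $J\cap I_n=JI_{n-1}$ holds for all $n$ (trivially for $n\geq 2$ since $I_n=JI_{n-1}\subseteq J$, and for $n=1$ since $J\subseteq I_1$), so by Valabrega--Valla $G(\mathcal I)$ is Cohen--Macaulay and the lower bound equals $\sum_{n\geq 1}\lm((I_n,J)/J)=\lm((I_1,J)/J)=\lm(I_1/J)$. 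Hence $e_1(\mathcal I)=\lm(I_1/J)=e_0(\mathcal I)-\lm(R/I_1)$, which rearranges to the desired equality.

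The main obstacle is bookkeeping rather than conceptual: one must be careful that $\lm(R/J)=e_0(\mathcal I)$ genuinely holds (this uses Cohen--Macaulayness of $R$ so that a system of parameters is a regular sequence, making $e_0$ of the reduction equal to its colength) and that the two directions of the equality case correctly track which inclusions $I_n\subseteq J$ and which identities $J\cap I_n=JI_{n-1}$ are available at each stage. The cleanest route throughout is to let Theorem \ref{HM} and Theorem \ref{vv} do the heavy lifting, so that Corollary \ref{s4} reduces to the elementary identity $\lm(R/I_1)=e_0(\mathcal I)-\lm((I_1,J)/J)$ and a squeeze between the two Huckaba--Marley bounds.
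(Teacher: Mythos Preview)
Your proof is correct and follows essentially the same approach as the paper: both use the lower bound in Theorem~\ref{HM}(1) together with the identity $\lm(R/I_1)=e_0(\mathcal I)-\lm(I_1/J)$ for the inequality, and for the equality case both deduce $I_n\subseteq J$ for $n\geq 2$, invoke Cohen--Macaulayness of $G(\mathcal I)$ (via Theorem~\ref{HM}(2)) and then Valabrega--Valla to get $I_n=JI_{n-1}$; the converse via the squeeze between the two Huckaba--Marley bounds is also the paper's argument. Your write-up is simply more explicit about the intermediate step through Theorem~\ref{HM}(2), which the paper leaves implicit.
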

\begin{proof} 
Let $J$ be a minimal reduction of $\mathcal I$. Since $$\lm((I_1,J)/J)\leq
\sum_{n\geq 1}\lm((I_n,J)/J) \leq e_1(\mathcal I)$$
we have 
$$\lm(I_1/J)=e_0(\mathcal I)-\lm(R/I_1) \leq e_1(\mathcal I).$$ 
If  $\lm(I_1/J)=e_0(\mathcal I)-\lm(R/I_1) = e_1(\mathcal I)$, we get
$\lm((I_n,J)/J)=0$ for all
$n \geq 2.$ By Theorem \ref{vv}, $I_n=JI_{n-1}$ for all
$n \geq 2.$ Hence $r(\mathcal I)\leq 1.$  Conversely let
$r(\mathcal I)\leq 1.$ Then by Theorem \ref{HM}, 
$\lm(I_1/J) \leq e_1(\mathcal I)\leq \lm(I_1/J). $ Hence
$\lm(I_1/J)=e_1(\mathcal I)=e_0(\mathcal I)-\lm(R/I_1).$
\end{proof}

 The next result of Huckaba-Marley characterizes  Cohen-Macaulay property  of
the Rees algebra of an $I$-admissible filtration in terms of its Chern number.
  \begin{corollary}{\rm \cite[Corollary 4.10]{hm}}
  Let $(R,\m)$ be a $d$-dimensional Cohen-Macaulay local ring and $I$ be an
$\m$-primary ideal. Let $R/\m$ be infinite. Let $\mathcal I=\{I_n\}$ be an
$I$-admissible filtration and $J$ be a minimal reduction of $\mathcal I$. Then
  $\mathcal R_+(\mathcal I)$ is Cohen-Macaulay if and only if $e_1(\mathcal
I)=\sum_{n=1}^{d-1}\lm((I_n,J)/J).$
 \end{corollary}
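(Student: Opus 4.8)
The plan is to combine the Huckaba-Marley Theorem (Theorem~\ref{HM}) with the classical criterion of Goto-Shimoda and Ikeda-Trung for Cohen-Macaulayness of a Rees algebra. The external input I will use is the following: for a $d$-dimensional Cohen-Macaulay local ring $(R,\m)$ with infinite residue field, an $I$-admissible filtration $\mathcal I$, and a minimal reduction $J$ of $\mathcal I$, the Rees algebra $\mathcal R_+(\mathcal I)$ is Cohen-Macaulay if and only if $G(\mathcal I)$ is Cohen-Macaulay and $r_J(\mathcal I)\leq d-1$. (This follows from the statement ``$\mathcal R_+(\mathcal I)$ is Cohen-Macaulay if and only if $G(\mathcal I)$ is Cohen-Macaulay and $a(G(\mathcal I))<0$'', together with the identity $a(G(\mathcal I))=r_J(\mathcal I)-d$ which holds when $G(\mathcal I)$ is Cohen-Macaulay.) Granting this, the corollary reduces to repackaging the two conditions ``$G(\mathcal I)$ Cohen-Macaulay'' and ``$r_J(\mathcal I)\leq d-1$'' into the single equality $e_1(\mathcal I)=\sum_{n=1}^{d-1}\lm((I_n,J)/J)$.

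The bridge between the reduction number and the truncated sum is the following observation, valid whenever $G(\mathcal I)$ is Cohen-Macaulay: $r_J(\mathcal I)\leq d-1$ if and only if $\sum_{n\geq1}\lm((I_n,J)/J)=\sum_{n=1}^{d-1}\lm((I_n,J)/J)$. Indeed, if $r_J(\mathcal I)\leq d-1$ then $I_n=JI_{n-1}\subseteq J$ for all $n\geq d$, so the summands with $n\geq d$ vanish. Conversely, if those summands vanish then $I_n\subseteq J$ for all $n\geq d$, and since $G(\mathcal I)$ is Cohen-Macaulay the images of the generators of $J$ in $I_1/I_2$ form a regular sequence, so the Valabrega-Valla criterion (Theorem~\ref{vv}) gives $J\cap I_n=JI_{n-1}$ for all $n$; hence $I_n=I_n\cap J=JI_{n-1}$ for $n\geq d$, i.e. $r_J(\mathcal I)\leq d-1$.

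For the forward implication, suppose $\mathcal R_+(\mathcal I)$ is Cohen-Macaulay. By the external fact, $G(\mathcal I)$ is Cohen-Macaulay and $r_J(\mathcal I)\leq d-1$; Theorem~\ref{HM}(2) turns the first into $e_1(\mathcal I)=\sum_{n\geq1}\lm((I_n,J)/J)$, and the observation above turns the second into $\sum_{n\geq1}\lm((I_n,J)/J)=\sum_{n=1}^{d-1}\lm((I_n,J)/J)$, giving the claimed equality. Conversely, assume $e_1(\mathcal I)=\sum_{n=1}^{d-1}\lm((I_n,J)/J)$. By Theorem~\ref{HM}(1) we have $\sum_{n=1}^{d-1}\lm((I_n,J)/J)\leq\sum_{n\geq1}\lm((I_n,J)/J)\leq e_1(\mathcal I)$, so all three quantities coincide. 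The equality $e_1(\mathcal I)=\sum_{n\geq1}\lm((I_n,J)/J)$ gives $G(\mathcal I)$ Cohen-Macaulay by Theorem~\ref{HM}(2), and the equality of the full and truncated sums gives $r_J(\mathcal I)\leq d-1$ by the observation; the external fact then yields that $\mathcal R_+(\mathcal I)$ is Cohen-Macaulay.

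The only substantial ingredient is the cited criterion relating Cohen-Macaulayness of $\mathcal R_+(\mathcal I)$ to that of $G(\mathcal I)$ together with the bound on the reduction number; in a fully self-contained account this would be established through the local cohomology of $\mathcal R(\mathcal I)$ and the exact sequences linking $\mathcal R_+(\mathcal I)$, $\mathcal R(\mathcal I)$, and $G(\mathcal I)$, but in a survey it is natural to invoke it. The remaining steps are bookkeeping with the Huckaba-Marley inequalities and the Valabrega-Valla criterion, exactly in the spirit of the proof of Corollary~\ref{s4}.
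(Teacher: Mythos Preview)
Your proposal is correct and follows essentially the same approach as the paper: both arguments first reduce the statement to the equivalence ``$e_1(\mathcal I)=\sum_{n=1}^{d-1}\lm((I_n,J)/J)$ if and only if $G(\mathcal I)$ is Cohen-Macaulay and $r_J(\mathcal I)\leq d-1$'', proving this via Theorem~\ref{HM} and the Valabrega-Valla criterion, and then invoke the standard characterization of Cohen-Macaulayness of the Rees algebra (the paper cites \cite{viet} for the filtration version, while you attribute it to Goto--Shimoda and Ikeda--Trung). Your write-up is somewhat more explicit about the bridging observation relating $r_J(\mathcal I)\leq d-1$ to the vanishing of the tail of the sum, but the logical structure is identical.
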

 
 \begin{proof}
 We first prove that  $e_1(\mathcal I)=\sum_{n=1}^{d-1}\lm((I_n,J)/J)$ if and
only if $G(\mathcal I)$ is Cohen-Macaulay and $r(\mathcal I)<d$.  The if part
follows from Theorem \ref{HM} (1). Now suppose the equality holds. By Theorem
\ref{HM}(1), $G(\mathcal I)$ is Cohen-Macaulay and $I_n\subseteq J$ for all
$n\geq d$. By Theorem \ref{vv}, $J\cap I_n=JI_{n-1}$ for all $n\geq 1$. Thus
$I_n=JI_{n-1}$ for ll $n\geq d$. The rest follows from \cite{viet}.
 \end{proof}
 The next result describes the relationship between the postulation number and
reduction number of an $I$-admissible filtration.
 \begin{theorem}{\rm\cite[Corollary 3.8]{m}}\label{redn}
 Let $(R,\m)$ be a $d$-dimensional Cohen-Macaulay local ring with an infinite
residue field. Let $I$ be an $\m$-primary ideal and $\mathcal I = \{I_n\}$ be an
$I$-admissible filtration such that $\depth G(\mathcal I)\geq d-1$. Then
$r(\mathcal I) = n(\mathcal I) + d.$
 \end{theorem}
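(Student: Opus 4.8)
The plan is to descend to dimension one by factoring out a carefully chosen partial minimal reduction, to dispatch the one-dimensional statement by a direct length computation, and then to transport the equality back up. Concretely: since $R/\m$ is infinite and $\depth G(\mathcal I)\ge d-1$, a standard superficiality argument yields a minimal reduction $J=(x_1,\dots,x_d)$ of $\mathcal I$ whose first $d-1$ initial forms $x_1^{\ast},\dots,x_{d-1}^{\ast}\in[G(\mathcal I)]_1=I_1/I_2$ form a regular sequence on $G(\mathcal I)$. Put $A=R/(x_1,\dots,x_{d-1})$, a one-dimensional \CM local ring, let $\mathfrak a$ be the image of $I$, and let $\mathcal A=\{A_n\}$ be the $\mathfrak a$-admissible filtration $A_n=(I_n+(x_1,\dots,x_{d-1}))/(x_1,\dots,x_{d-1})$. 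By the Valabrega--Valla criterion (Theorem \ref{vv}) applied to the partial regular sequence one has $(x_1,\dots,x_{d-1})\cap I_n=(x_1,\dots,x_{d-1})I_{n-1}$ for all $n$; hence $G(\mathcal A)\cong G(\mathcal I)/(x_1^{\ast},\dots,x_{d-1}^{\ast})G(\mathcal I)$ and $\bar J:=x_dA$ is a minimal reduction of $\mathcal A$. This intersection formula supplies two transfer facts: for each $n$, $A_n=\bar J A_{n-1}\iff I_n\subseteq JI_{n-1}\iff I_n=JI_{n-1}$, so $r_J(\mathcal I)=r_{\bar J}(\mathcal A)$; and, iterating the standard observation that dividing an admissible filtration by a regular initial form replaces its Hilbert function by the first difference, $H_{\mathcal A}(n)=(\Delta^{d-1}H_{\mathcal I})(n)$, whence comparing with the degree-one polynomial $(\Delta^{d-1}P_{\mathcal I})(n)$ gives $P_{\mathcal A}(n)=(\Delta^{d-1}P_{\mathcal I})(n)$ and, by a short finite-difference bookkeeping, $n(\mathcal A)=n(\mathcal I)+d-1$.

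\emph{The one-dimensional case.} I claim that if $A$ is a one-dimensional \CM local ring, $\mathcal A=\{A_n\}$ an $\mathfrak a$-admissible filtration, and $K=(y)$ \emph{any} minimal reduction of $\mathcal A$, then $r_K(\mathcal A)=n(\mathcal A)+1$. Here $y$ is a nonzerodivisor and, for every $n$, $A_n$ is $\m$-primary with $y$ regular on it, so $\lm(A_{n-1}/yA_{n-1})=e_0(y;A_{n-1})=e_0(y;A)=e_0(\mathcal A)$; since $yA_{n-1}\subseteq A_n\subseteq A_{n-1}$ this forces $\lm(A_{n-1}/A_n)\le e_0(\mathcal A)$, with equality iff $A_n=yA_{n-1}$. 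For $n>r:=r_K(\mathcal A)$ we have $A_n=yA_{n-1}$, hence $A_{r+k}=y^kA_r$ and $\lm(A/A_{r+k})=\lm(A/A_r)+k\,e_0(\mathcal A)$ for all $k\ge0$; being a polynomial in $k$ of degree $\le1$ that agrees with $P_{\mathcal A}(r+k)$ for $k\gg0$, it equals it for all $k\ge0$, so $H_{\mathcal A}=P_{\mathcal A}$ from degree $r$ onward and $n(\mathcal A)\le r-1$. Conversely, if $n\ge n(\mathcal A)+2$ then $H_{\mathcal A}$ agrees with $P_{\mathcal A}$ at both $n$ and $n-1$, so $\lm(A_{n-1}/A_n)=P_{\mathcal A}(n)-P_{\mathcal A}(n-1)=e_0(\mathcal A)$, which forces $A_n=yA_{n-1}$; thus $r\le n(\mathcal A)+1$.

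\emph{Conclusion and the hard point.} Feeding the one-dimensional claim (applied to $\mathcal A$ and $\bar J$) into the transfer facts gives $r_J(\mathcal I)=r_{\bar J}(\mathcal A)=n(\mathcal A)+1=n(\mathcal I)+d$, hence $r(\mathcal I)\le r_J(\mathcal I)=n(\mathcal I)+d$. The reverse inequality $r(\mathcal I)\ge n(\mathcal I)+d$ is the delicate part: it requires $r_{J'}(\mathcal I)\ge n(\mathcal I)+d$ for \emph{every} minimal reduction $J'$. For this I would first record the reduction-free identity $n(\mathcal I)=\deg h(t)-d$, where $H_{G(\mathcal I)}(t)=h(t)/(1-t)^d$ with $h\in\Z[t]$ (immediate from $H_{\mathcal I}(t)=\tfrac{t}{1-t}H_{G(\mathcal I)}(t)$), and the identity $r_{J'}(\mathcal I)=\max\{\,n:[G(\mathcal I)/(y_1^{\ast},\dots,y_d^{\ast})G(\mathcal I)]_n\ne0\,\}$, which follows from $[G(\mathcal I)/(y^{\ast})G(\mathcal I)]_n=I_n/(J'I_{n-1}+I_{n+1})$ together with a Nakayama argument on the tail of the filtration. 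One must then verify that this Artinian quotient is not concentrated in degrees $<\deg h(t)$, i.e.\ that $r_{J'}(\mathcal I)$ does not depend on $J'$ once $\depth G(\mathcal I)\ge d-1$. This is exactly where the depth hypothesis is indispensable --- the equality $r=n+d$ genuinely fails when $\depth G(\mathcal I)<d-1$ --- and it is the one step that the naive reduction to dimension one does not reach, since for a general $J'$ the initial forms $y_1^{\ast},\dots,y_{d-1}^{\ast}$ need not be a regular sequence on $G(\mathcal I)$.
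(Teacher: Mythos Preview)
The paper does not give its own proof of this statement; it is simply quoted from Marley's thesis \cite[Corollary 3.8]{m}. So there is nothing to compare your argument against directly, and I will assess it on its own merits.

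Your reduction to dimension one is carried out correctly. The choice of $x_1,\dots,x_{d-1}$ with $x_1^{\ast},\dots,x_{d-1}^{\ast}$ a $G(\mathcal I)$-regular sequence, the Valabrega--Valla intersection formula, the transfer identities $r_J(\mathcal I)=r_{\bar J}(\mathcal A)$ and $n(\mathcal A)=n(\mathcal I)+d-1$, and the one-dimensional computation $r_K(\mathcal A)=n(\mathcal A)+1$ for \emph{every} principal reduction $K$ are all sound. This yields $r_J(\mathcal I)=n(\mathcal I)+d$ for your particular $J$, hence $r(\mathcal I)\le n(\mathcal I)+d$.

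The gap you yourself flag is genuine and is not closed by what you wrote. The inequality $r_{J'}(\mathcal I)\ge \deg h(t)$ for an \emph{arbitrary} minimal reduction $J'$ does \emph{not} follow from the identity $n(\mathcal I)=\deg h(t)-d$ together with the description $r_{J'}(\mathcal I)=\max\{n:[G(\mathcal I)/(y_1^{\ast},\dots,y_d^{\ast})]_n\neq 0\}$: without knowing that $y_1^{\ast},\dots,y_d^{\ast}$ is a regular sequence on $G(\mathcal I)$ (which would require $G(\mathcal I)$ Cohen--Macaulay, not merely depth $\ge d-1$), the Hilbert series of that Artinian quotient need not be $h(t)$, and its top degree can in principle drop below $\deg h(t)$. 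So the ``one must then verify'' step is exactly the content of the theorem, not a formality.

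What actually closes the gap is the separate fact --- proved by Marley in the same thesis and, in various forms, by Huckaba and by Trung --- that when $\depth G(\mathcal I)\ge d-1$ the number $r_{J'}(\mathcal I)$ is independent of the minimal reduction $J'$. Once that is in hand, your computation $r_J(\mathcal I)=n(\mathcal I)+d$ for the one good $J$ finishes the proof immediately. If you want a self-contained argument, you should supply a proof of that independence statement (for instance via the ``Sally machine'' or Huckaba's analysis of $\lm(I_n/J'I_{n-1})$ under the depth hypothesis); as written, your proposal establishes only the inequality $r(\mathcal I)\le n(\mathcal I)+d$.
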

 
 Itoh in \cite{i2} has given another  lower bound  for the normal Chern number
in a Cohen-Macaulay local ring. We give a different proof of Itoh's theorem.
Before we prove it we need some more results of Itoh proved in \cite{i3}.
 
 \begin{theorem}[Huneke-Itoh Intersection Theorem]\rm{\cite[Theorem
1]{i3}}\label{ithm2}
Let $(R,\m)$ be a $d$-dimensional local ring and $I$ be an ideal generated by a
regular sequence. Then for every $n\geq 1,$ 
$$I^n\cap \ov{I^{n+1}}=I^n\ov I.$$ 
 \end{theorem}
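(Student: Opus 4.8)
The plan is to prove the two inclusions separately. The inclusion $I^n\ov I\subseteq I^n\cap\ov{I^{n+1}}$ is formal: $I^n\ov I\subseteq I^n$ is trivial, and since a product of integral closures lies inside the integral closure of the product, $I^n\ov I\subseteq\ov{I^n}\,\ov I\subseteq\ov{I^{n+1}}$. The content is the reverse inclusion, and the decisive input for it is that $a_1,\dots,a_\ell$ being a regular sequence makes $G(I)=\bigoplus_{m\ge 0}I^m/I^{m+1}$ a polynomial ring $(R/I)[T_1,\dots,T_\ell]$; in particular the only syzygies among the $a_i$ are Koszul syzygies, and $I^{m+1}:a_i=I^m$ for all $i$ and all $m\ge 0$ (the initial form of $a_i$ is a nonzerodivisor in $G(I)$, and $a_i$ is a nonzerodivisor in $R$). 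As a warm-up one sees why the obvious argument is not enough: given $z\in I^n\cap\ov{I^{n+1}}$ with $z\notin I^{n+1}$ (if $z\in I^{n+1}$ we are already done, as $I^{n+1}\subseteq I^nI\subseteq I^n\ov I$) and an integral equation $z^k+c_1z^{k-1}+\cdots+c_k=0$ with $c_j\in I^{(n+1)j}$, read the equation modulo $I^{nk+1}$; since $c_jz^{k-j}$ has $I$-order at least $(n+1)j+n(k-j)=nk+j$, the initial form $z^{\ast}\in G(I)_n$ must satisfy $(z^{\ast})^k=0$, and since a power of a nonzero homogeneous element of $(R/I)[T]$ vanishes only if all its coefficients are nilpotent, writing $z=\sum_{|\alpha|=n}b_\alpha a^\alpha$ forces $b_\alpha\in\sqrt I$ for every $\alpha$. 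This gives $z\in I^n\sqrt I$, short of the target $I^n\ov I$.

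To reach the sharp statement I would induct on the number $\ell$ of generators. For the base case $\ell=1$ write $I=(a)$ with $a$ a nonzerodivisor, $z=a^nb$ and $c_j=a^{(n+1)j}e_j$; substituting into the integral equation, every term is divisible by $a^{nk}$, and cancelling this nonzerodivisor yields $b^k+(ae_1)b^{k-1}+\cdots+(a^ke_k)=0$, an equation of integral dependence of $b$ on $(a)$, so $b\in\ov{(a)}=\ov I$ and $z=a^nb\in I^n\ov I$. For the inductive step set $b=a_\ell$, which (permutations of a regular sequence in a local ring being again regular sequences) is a nonzerodivisor on $R$ and on $R/I'$ with $I'=(a_1,\dots,a_{\ell-1})$. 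In $R/(b)$ the ideal $I(R/(b))=I'(R/(b))$ is generated by the length-$(\ell-1)$ regular sequence of images, and $\bar z\in I^n(R/(b))\cap\ov{I^{n+1}(R/(b))}$, so the inductive hypothesis puts $\bar z$ into $(I'(R/(b)))^n\,\ov{I'(R/(b))}$. Lifting this congruence writes $z=\sum_{|\alpha|=n,\ \alpha_\ell=0}a^\alpha w_\alpha+bz'$ with $z'\in I^n:b=I^{n-1}$ and the $w_\alpha$ a priori only lifting elements integral over $I'$ modulo $b$; subtracting the part already in place and using the colon relations $I^{m+1}:b=I^m$ to keep the successive remainders inside the corresponding $I^m\cap\ov{I^{m+1}}$, one recurses on the power of $b$ needed to express the remainder until all coefficients land in $\ov I$.

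The step I expect to be the main obstacle is exactly this last bootstrap. A single use of the reduction modulo $b$ (like a single use of the order estimate in the first paragraph) produces only coefficients integral over $I'$ modulo $b$, or only coefficients in $\sqrt I$, and promoting them to honest elements of $\ov I$ requires the nested induction just sketched, whose termination and bookkeeping — tracking which coefficients have already been deposited inside $\ov I$ — must be set up with care. The regular-sequence hypothesis is used precisely here: it supplies the colon relations $I^{m+1}:a_\ell=I^m$ through the polynomial-ring structure of $G(I)$, and it lets each $a_i$ in turn play the role of $b$. Finally, one should note that nothing beyond ``$R$ Noetherian local'' and ``$I$ generated by a regular sequence'' is available — in particular $R$ is not assumed analytically unramified — so the nested induction has to be arranged so as never to invoke properties of the filtration $\{\ov{I^m}\}$ (such as $\ov{I^{m+1}}:a_\ell=\ov{I^m}$), relying only on the $I$-adic filtration and the integral equations themselves.
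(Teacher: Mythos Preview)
The paper does not give its own proof of this theorem; it merely states it with a citation to Itoh and remarks that Hong--Ulrich have a simpler proof. So there is no ``paper's proof'' to compare against. What follows is an assessment of your attempt on its own merits.

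Your easy inclusion, the $\ell=1$ base case, and the warm-up showing $z\in I^n\sqrt I$ are all correct and well explained. The difficulty is exactly where you locate it, and your inductive step does not close the gap. Concretely: after reducing modulo $b=a_\ell$ and applying the inductive hypothesis you obtain $\bar z=\sum_{|\alpha|=n,\ \alpha_\ell=0}\bar a^\alpha\bar w_\alpha$ with $\bar w_\alpha\in\ov{I(R/(b))}$, and then you lift to $z=\sum a^\alpha w_\alpha+bz_1$ with $z_1\in I^{n-1}$ (using $I^n:b=I^{n-1}$). For your recursion to continue you would need either $w_\alpha\in\ov I$ or $bz_1\in\ov{I^{n+1}}$; but the second follows from the first (since $bz_1=z-\sum a^\alpha w_\alpha$ and $z\in\ov{I^{n+1}}$), so everything hinges on lifting $\bar w_\alpha\in\ov{I(R/(b))}$ to an element of $\ov I$. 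That lifting property --- that integral closure commutes with going modulo the regular element $b$ --- is \emph{not} a formal consequence of the regular-sequence hypothesis, and is in fact essentially equivalent in difficulty to the theorem you are trying to prove (it is precisely the kind of ``specialization of integral closure'' statement treated in the Hong--Ulrich paper the survey cites). Without it, your remainder $bz_1$ is only known to lie in $I^n$, not in $I^n\cap\ov{I^{n+1}}$, so the recursion never gets off the ground; and no amount of iterating the colon relations $I^{m+1}:b=I^m$ will manufacture the missing integrality information, because those relations concern only the $I$-adic filtration, not the filtration $\{\ov{I^m}\}$.

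In short, the architecture you propose (induction on $\ell$, reduce modulo the last generator, lift) is natural but requires as input a lifting lemma for integral closure that you have not supplied and that is not elementary. Itoh's original argument proceeds quite differently, via the extended Rees algebra and an analysis of the integral closure of $(t^{-1})$ there; the Hong--Ulrich approach establishes the needed specialization statement directly. If you want to pursue your line, the missing lemma to prove is: for a regular sequence $a_1,\dots,a_\ell$ with $I=(a_1,\dots,a_\ell)$ and $b=a_\ell$, the natural map $\ov I\to\ov{I(R/(b))}$ is surjective. That is where the real work lies.
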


Recently Ulrich and Hong have given a simpler proof in a more 
general settings of the Huneke-Itoh Intersection Theorem \cite{hu}.
 
 \begin{lemma}\label{itl}
 Let $(R,\m)$ be a local ring and $I$ be a parameter ideal of $R$. Then $\ov
r(I)\leq 2$ if and only if $\ov {I^{n+2}}=I^n \ov {I^2}$ for all $n\geq 1$.
 \end{lemma}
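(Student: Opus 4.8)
The plan is to reduce the statement to an elementary equivalence between two families of ideal equalities, after first unwinding what $\ov r(I)\leq 2$ means. Since $I$ is a parameter ideal it is itself a minimal reduction of the filtration $\mathcal I=\{\ov{I^n}\}$, so $\ov r(I)$ is computed along $I$; concretely, $\ov r(I)\leq 2$ is equivalent to the assertion that $\ov{I^{m}}=I\,\ov{I^{m-1}}$ for every $m\geq 3$. Here one uses that $\{\ov{I^n}\}$ is an $I$-admissible filtration (so that $\ov r(I)$ is defined) and that a parameter ideal admits no proper reduction. Once this reformulation is available, both implications follow by a telescoping induction, and neither direction requires anything deeper.

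For the forward implication I would prove, by induction on $n\geq 1$, that $\ov{I^{n+2}}=I^{n}\,\ov{I^2}$. The base case $n=1$ is exactly the $m=3$ instance of $\ov{I^{m}}=I\,\ov{I^{m-1}}$. For the inductive step, assuming $\ov{I^{n+2}}=I^{n}\,\ov{I^2}$,
$$\ov{I^{n+3}}=I\,\ov{I^{n+2}}=I\cdot I^{n}\,\ov{I^2}=I^{n+1}\,\ov{I^2},$$
the first equality being the relation at level $m=n+3\geq 3$.

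For the reverse implication, assume $\ov{I^{n+2}}=I^{n}\,\ov{I^2}$ for all $n\geq 1$ and fix $m\geq 3$; put $n=m-2\geq 1$. The hypothesis gives $\ov{I^{m}}=\ov{I^{n+2}}=I^{n}\,\ov{I^2}$. If $m=3$ this is already $\ov{I^3}=I\,\ov{I^2}$. If $m\geq 4$ then $n-1=m-3\geq 1$, so the hypothesis applied at $n-1$ yields $\ov{I^{m-1}}=I^{n-1}\,\ov{I^2}$, hence $I\,\ov{I^{m-1}}=I^{n}\,\ov{I^2}=\ov{I^{m}}$. Thus $\ov{I^{m}}=I\,\ov{I^{m-1}}$ for all $m\geq 3$, i.e. $\ov r(I)\leq 2$.

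The one step that is not purely formal, and the point I would be most careful about, is the initial translation of $\ov r(I)\leq 2$ into the relations $\ov{I^{m}}=I\,\ov{I^{m-1}}$: this is precisely where the hypothesis that $I$ is a parameter ideal is used (it guarantees $I$ is basic, hence the relevant minimal reduction of the filtration), together with whatever standing hypothesis makes $\{\ov{I^n}\}$ admissible. In contrast to the proof of Itoh's lower bound that this lemma feeds into, the Huneke--Itoh intersection theorem is not needed here; the lemma merely repackages ``$\ov r(I)\leq 2$'' as a single chain of equalities of the shape $\ov{I^{n+2}}=I^{n}\,\ov{I^2}$.
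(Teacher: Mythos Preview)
Your proof is correct and follows essentially the same approach as the paper's: both directions are proved by a telescoping induction, reducing the equivalence to the relations $\ov{I^{m}}=I\,\ov{I^{m-1}}$ for $m\geq 3$. If anything, you are more careful than the paper about justifying the translation of $\ov r(I)\leq 2$ into those relations (noting that the parameter ideal $I$ serves as the relevant minimal reduction) and about handling the base case $m=3$ in the reverse implication, which the paper's terse argument glosses over.
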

 
 \begin{proof}
 We apply induction on $n$. If $\ov r(I)\leq 2$, $\ov {I^{n+2}}=I\ov {I^{n+1}}$
for all $n\geq 1$. Hence $\ov {I^3}=I\ov {I^2}$. Thus the result holds for
$n=1$. Now assume the results holds for $n$ and we will prove it for $n+1$.
  Note that $\ov {I^{n+3}}=I\ov {I^{n+2}}=I^{n+1} \ov{I^2}$. 
 Conversely assume that $\ov {I^{n+2}}=I^n \ov {I^2}$ for all $n\geq 1$. Apply
induction on $n$. Then $\ov {I^{n+3}}=I^{n+1}\ov {I^2}=I\ov {I^{n+2}}$. Hence
$\ov r(I)\leq 2$.
 \end{proof}

Next we prove a result of Itoh about a lower bound on the normal Chern number.
For a generalisation of this result to good filtration of modules, 
see \cite[Theorem 3.1]{rv}.
 \begin{theorem}{\rm \cite[Theorem 2(1)]{i2}}\label{s5}
  Let $(R,\m)$ be a $d$-dimensional analytically unramified Cohen-Macaulay local
ring and let $I$ be a parameter ideal. Then 
$$\ov e_1(I) \geq \lm(\ov I/I)+ 
 \lm(\ov {I^2}/I\ov I)$$ 
and equality holds if and only if $\ov r(I)\leq 2$.
 \end{theorem}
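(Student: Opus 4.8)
The plan is to reduce to dimension one by a superficial element argument and then use the Huckaba--Marley Theorem (Theorem~\ref{HM}) applied to the integral closure filtration $\mathcal I=\{\ov{I^n}\}$, together with the Huneke--Itoh Intersection Theorem (Theorem~\ref{ithm2}) to identify the relevant lengths. Since $I$ is a parameter ideal generated by a regular sequence and $R$ is Cohen--Macaulay, we may assume $R/\m$ is infinite and choose a minimal reduction; after passing to a general superficial element and reducing modulo it (using that superficial elements for the filtration $\{\ov{I^n}\}$ exist and behave well, as in \cite[section 8.5]{hs}), the coefficient $\ov e_1(I)$ and the reduction number $\ov r(I)$ are preserved, so it suffices to treat $d=1$. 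In that case $I=(x)$ is principal, $J=I$ is its own minimal reduction, and Theorem~\ref{HM}(1) gives
$$\sum_{n\geq 1}\lm((\ov{I^n},I)/I)\;\leq\;\ov e_1(I)\;\leq\;\sum_{n\geq 1}\lm(\ov{I^n}/I\,\ov{I^{n-1}}),$$
with equality on the left exactly when $\ov G(I)$ is \CM{} and equality on the right exactly when $\depth \ov G(I)\geq d-1=0$, which is automatic. Hence $\ov e_1(I)=\sum_{n\geq 1}\lm(\ov{I^n}/I\,\ov{I^{n-1}})$ always holds when $d=1$.

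Next I would evaluate the right-hand sum using the Huneke--Itoh Intersection Theorem. The key point is that $\ov{I^n}=I^{n-1}\ov I$ for all $n\geq 2$ whenever $d=1$: indeed $I^{n-1}\cap\ov{I^n}=I^{n-1}\ov I$ by Theorem~\ref{ithm2}, and since $\ov{I^n}\subseteq\ov{I}\subseteq R$ and in dimension one $\ov{I^n}\subseteq I^{n-2}$ (from $I$-admissibility for $k$ small, or more carefully from the structure of the Rees valuation), one deduces $\ov{I^n}\subseteq I^{n-1}$, hence $\ov{I^n}=I^{n-1}\cap\ov{I^n}=I^{n-1}\ov I$. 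Wait — this last inclusion needs care; the clean statement I want is that for a principal ideal $I=(x)$ in a one-dimensional analytically unramified \CM{} local ring, $\ov{I^{n}} = I^{n-2}\ov{I^{2}}$ for all $n\geq 2$, which follows by iterating $\ov{I^{n+1}} = I\,\ov{I^{n}}$; this iteration in turn comes from Huneke--Itoh applied degree by degree (or equivalently from $\ov r(I)\leq 2$ in the one-dimensional principal case, which I would establish directly: $\ov{I^3}=I^2\cap\ov{I^3} \oplus\cdots$ — more precisely one shows $\ov G(I)$ is \CM{} of reduction number $\leq 2$). Granting $\ov{I^n}=I^{n-2}\ov{I^2}$ for $n\geq 3$, the summands $\lm(\ov{I^n}/I\,\ov{I^{n-1}})$ vanish for $n\geq 3$ because $\ov{I^n}=I^{n-2}\ov{I^2}=I\cdot I^{n-3}\ov{I^2}=I\,\ov{I^{n-1}}$, so the sum telescopes to $\lm(\ov I/I)+\lm(\ov{I^2}/I\ov I)$, giving $\ov e_1(I)=\lm(\ov I/I)+\lm(\ov{I^2}/I\ov I)$ in dimension one.

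For general $d$, only the inequality survives the reduction, yielding $\ov e_1(I)\geq \lm(\ov I/I)+\lm(\ov{I^2}/I\ov I)$; here I must check that passing to $R/(a)$ for a superficial element $a$ sends $\ov I$ to $\ov{I/(a)}$ and $\ov{I^2}$ to $\ov{(I/(a))^2}$ and preserves the relevant lengths, which is the standard superficial-element bookkeeping for normal filtrations (Huneke--Itoh again guarantees $a$ is a non-zerodivisor on $\ov G(I)$-relevant pieces, or one uses generic superficial elements and the fact that $R$ analytically unramified is preserved). For the equality case: if $\ov r(I)\leq 2$, then by Lemma~\ref{itl} we have $\ov{I^{n+2}}=I^n\ov{I^2}$ for all $n\geq 1$, so $\ov{I^n}=I\,\ov{I^{n-1}}$ for all $n\geq 3$; hence $\sum_{n\geq 1}\lm(\ov{I^n}/I\,\ov{I^{n-1}})=\lm(\ov I/I)+\lm(\ov{I^2}/I\ov I)$, and combined with the left-hand Huckaba--Marley bound $\ov e_1(I)\geq\sum\lm((\ov{I^n},J)/J)$ and the upper bound, one checks equality forces $\ov e_1(I)$ to equal this sum (using that $\ov r(I)\leq 2<d$ typically, or handling small $d$ separately). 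Conversely, if $\ov e_1(I)=\lm(\ov I/I)+\lm(\ov{I^2}/I\ov I)$, then the upper Huckaba--Marley bound is attained with all summands for $n\geq 3$ forced to be zero, i.e. $\ov{I^n}=J\,\ov{I^{n-1}}$ for $n\geq 3$ (by Theorem~\ref{HM}(3), $\depth\ov G(I)\geq d-1$, then chase the reduction), which says $\ov r(I)\leq 2$.

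\textbf{Main obstacle.} The delicate point is the one-dimensional identity $\ov{I^n}=I^{n-2}\ov{I^2}$ for $n\geq 3$ (equivalently $\ov r(I)\leq 2$ for a principal ideal in a one-dimensional analytically unramified \CM{} local ring), and the correct bookkeeping of integral closures and lengths under reduction by a superficial element. I expect the cleanest route is: establish the one-dimensional statement \emph{directly} from the Huneke--Itoh Intersection Theorem (Theorem~\ref{ithm2}) — namely $\ov{I^{n+1}}=\ov{I^{n+1}}\cap I^{n-1}=I^{n-1}\ov{I}$ once one knows $\ov{I^{n+1}}\subseteq I^{n-1}$, and this containment is exactly the content of $\ov r(I)\le 2$ which itself follows because the Rees valuation rings of a principal ideal are DVRs and a one-step computation there gives $\ord_v(\ov{I^{n+1}})=(n+1)\ord_v(x)\ge (n-1)\ord_v(x)$ with the gap absorbed — and then feed this into the Huckaba--Marley machine exactly as above. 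The rest is formal.
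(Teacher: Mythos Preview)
Your approach has two genuine gaps, and the detour through dimension one is unnecessary.

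First, your one-dimensional claim that $\ov r(I)\le 2$ for a principal parameter ideal in a one-dimensional analytically unramified \CM\ local ring is false. Take $R=k[[t^3,t^7]]$ and $I=(t^3)$. Then $\ov{I^3}=t^9k[[t]]\cap R$ contains $t^{14}=t^7\cdot t^7$, but $I\ov{I^2}=t^3(t^6,t^7)=(t^9,t^{10})$ does not contain $t^{14}$ since $14-9=5$ and $14-10=4$ are gaps of the numerical semigroup $\langle 3,7\rangle$. Hence $\ov{I^3}\neq I\ov{I^2}$ and $\ov r(I)\ge 3$. Your ``Rees valuation'' sketch does not produce the containment $\ov{I^{n+1}}\subseteq I^{n-1}$ that you need; indeed it would imply $\ov r(I)\le 2$, which this example refutes. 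Second, your converse direction in general $d$ is backwards: from $\ov e_1(I)=\lm(\ov I/I)+\lm(\ov{I^2}/I\ov I)$ you cannot conclude that the \emph{upper} Huckaba--Marley sum $\sum_{n\ge 1}\lm(\ov{I^n}/I\ov{I^{n-1}})$ has vanishing tail, since that sum is only known to be $\ge \ov e_1(I)$. (Separately, the superficial-element reduction only matches $\ov{I^n}$ with $\ov{(IC)^n}$ for $n=1$ and for large $n$ --- see Theorem~\ref{s1} --- so the degree-two length does not automatically survive.)

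The paper's proof avoids all of this by working directly in dimension $d$ with the \emph{lower} Huckaba--Marley bound. Taking only the first two summands of $\sum_{n\ge 1}\lm((\ov{I^n}+I)/I)\le \ov e_1(I)$ gives $\ov e_1(I)\ge \lm(\ov I/I)+\lm((\ov{I^2}+I)/I)$, and Huneke--Itoh identifies $(\ov{I^2}+I)/I\cong \ov{I^2}/(\ov{I^2}\cap I)=\ov{I^2}/I\ov I$. If equality holds, then the remaining summands $\lm((\ov{I^n}+I)/I)$ vanish for $n\ge 3$, so $\ov{I^n}\subseteq I$; moreover the lower bound is attained, so Theorem~\ref{HM}(2) gives $\ov G(I)$ \CM, and Valabrega--Valla yields $\ov{I^n}=\ov{I^n}\cap I=I\ov{I^{n-1}}$ for $n\ge 3$, i.e.\ $\ov r(I)\le 2$. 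The converse you essentially had right: if $\ov r(I)\le 2$ then the upper Huckaba--Marley sum collapses to the two displayed terms, and since the lower sum equals the same two terms (again by Huneke--Itoh), $\ov e_1(I)$ is squeezed to equality. No induction, no superficial elements.
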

 
 \begin{proof}
 Applying Theorem \ref{HM},
 \begin{eqnarray*}
 \ov e_1(I)& \geq & \lm(\ov I/I)+\lm((\ov {I^2},I)/I)\\
                & = & \lm(\ov I/I)+\lm(\ov {I^2}/(\ov{I^2}\cap I))\\
                & = & \lm(\ov I/I)+\lm(\ov {I^2}/\ov{I}I) ~~~~~~(\mbox{by
Huneke-Itoh intersection theorem}).
 \end{eqnarray*}
 Suppose that equality holds, i.e. $\ov e_1(I)=\lm(\ov I/I)+\lm(\ov {I^2}/I\ov
I)$. By Theorem \ref{HM}, $\lm ((\ov {I^n},I)/I)=0$ for all $n\geq 3$, which
implies $\ov {I^n}\subseteq I$ for $n\geq 3$.  By Theorem \ref{HM}, $\ov G(I)$
is Cohen-Macaulay. Hence by Valabrega-Valla theorem we have $\ov {I^n}\cap
I=I\ov {I^{n-1}}$ for all $n\geq 1$. Thus we have $\ov {I^{n+2}}=\ov
{I^{n+2}}\cap I=I\ov {I^{n+1}}$ for all $n\geq 1$. Hence $\ov r(I)\leq 2$. \\
Conversely assume that  $\ov r(I)\leq 2$. Then we have $\ov {I^{n+2}}=I\ov
{I^{n+1}}$ for all $n\geq 1$. By Theorem \ref{HM}(1), we get $\ov e_1(I)\leq
\lm(\ov I/I)+\lm(\ov {I^2}/I\ov I)$. Hence we have $$\ov e_1(I)-\lm(\ov I/I)=
 \lm(\ov {I^2}/I\ov I).$$
 \end{proof}

 \section{On the equality $\ov e_1(I)=e_1(I)$ }
In this section we observe how the equality of the Chern number and normal Chern
number characterizes the ring to be regular. In this direction Moral{\'e}s,
Trung and Villamayor proved the following interesting 
\begin{theorem}{\rm \cite[Theorem 1,2]{mtv}}
 Let $(R,\m)$ be an analytically unramified excellent local domain and let $I$
be a parameter ideal. If $\ov e_1(I)=e_1(I)$ then $R$ is regular and $\ov
{I^n}=I^n$ for all $n$.
\end{theorem}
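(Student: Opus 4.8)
The plan is to reduce to the case $\dim R=1$, settle that case by a normalization argument, climb back up to deduce that $R$ is regular, and only then read off $\ov{I^n}=I^n$ from the Cohen--Macaulay theory, which by that point applies. To set up the reduction, note first that $\ov e_0(I)=e_0(I)$ always (both filtrations are $I$-admissible with the same $e_0$), while Theorem \ref{M} gives $\ov e_1(I)\ge e_1(I)$; the hypothesis is thus exactly the statement that the defect $D(x):=P_I(x)-\ov P_I(x)$ is a polynomial of degree $\le d-2$. Replacing $R$ by $R[X]_{\m R[X]}$ we may assume $R/\m$ infinite (this preserves ``excellent analytically unramified local domain'' and all Hilbert data). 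Choose $x_1,\dots,x_{d-1}$ to be, successively, general elements of $I$ in the quotients $R/(x_1,\dots,x_{i-1})$, so that each is superficial both for the $I$-adic filtration and for the image of $\{\ov{I^n}\}$, and part of a system of parameters; the delicate point, which uses excellence, is that $A:=R/(x_1,\dots,x_{d-1})$ can be arranged to be again a reduced (hence Cohen--Macaulay, since $\dim A=1$) analytically unramified excellent local ring, with moreover $\ov{I^n}A=\ov{(IA)^n}$ for all $n\gg 0$. The standard length computation shows that modulo a superficial element the Hilbert polynomial of a filtration changes by one finite difference plus a correction term equal to the length of the relevant annihilator, and that this correction term is the \emph{same} for the $I$-adic and the normal filtration; hence $D$ transforms by one finite difference at each step, so after $d-1$ steps the defect of $A$ is $\Delta^{d-1}D=0$. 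Consequently $\lambda(A/(IA)^n)=\lambda(A/\ov{I^n}A)$ for $n\gg 0$, i.e. $(IA)^n=\ov{I^n}A$ for $n\gg 0$, and combining with $\ov{I^n}A=\ov{(IA)^n}$ we get that $IA$ is asymptotically normal: $\ov{(IA)^n}=(IA)^n$ for all large $n$.

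Now the one-dimensional core. Write $IA=\xi A$ with $\xi$ a nonzerodivisor (a general element of the $\m_A$-primary ideal $IA$ misses the finitely many minimal primes of the reduced ring $A$). Let $B$ be the integral closure of $A$ in its total ring of fractions; by excellence $B$ is module-finite over $A$, it is a finite product of discrete valuation rings, and $(A:B)$ is $\m_A$-primary because the non-normal locus of the reduced one-dimensional ring $A$ lies in its closed point, so $B/A$ has finite length. For every $n$ one has $\ov{\xi^nA}=\xi^nB\cap A$, since integral closure of ideals is computed in the normalization and $\xi^nB$ is integrally closed ($B$ being a product of DVR's). Choosing $N$ with $\ov{\xi^NA}=\xi^NA$ yields $\xi^NB\cap A=\xi^NA$, which says precisely that multiplication by $\xi^N$ is injective on $B/A$; being an injective endomorphism of a module of finite length it is bijective, so $B/A=\xi^N(B/A)\subseteq\m_A(B/A)$ and Nakayama forces $B/A=0$. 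Thus $A$ is normal of dimension one, hence a DVR, hence regular.

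Finally we climb back. Since $A=R/(x_1,\dots,x_{d-1})$ is regular of dimension one its maximal ideal is principal, so $\m=(x_1,\dots,x_{d-1},t)$ for a single $t\in R$ by Nakayama; hence the minimal number of generators $\mu(\m)$ of $\m$ is at most $d$, and as $\mu(\m)\ge\dim R=d$ always, $R$ is regular. Then $R$ is Cohen--Macaulay, $I$ is generated by a regular sequence, $G(I)\simeq(R/I)[T_1,\dots,T_d]$, so $P_I(x)=\lambda(R/I)\binom{x+d-1}{d}$ and $e_1(I)=0$; the hypothesis then forces $\ov e_1(I)=0$, and the observation recorded above---that in an analytically unramified Cohen--Macaulay local ring $\ov e_1(I)=0$ implies $\ov{I^n}=I^n$ for all $n$---completes the proof.

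The real work lies entirely in the reduction to dimension one: guaranteeing that a general hyperplane section stays analytically unramified and, above all, that it does not disturb the integral-closure filtration, i.e. $\ov{I^n}A=\ov{(IA)^n}$ for $n\gg 0$. This is the analogue, for the normal filtration, of the classical theory of superficial elements, and it is exactly where the excellence hypothesis enters (finiteness of the integral closures of $R$ and of its Rees algebra, together with Bertini-type reducedness and irreducibility of generic hyperplane sections). Granting that input, the one-dimensional normalization argument and the ascent are routine.
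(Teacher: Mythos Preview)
Your route is genuinely different from the paper's. The paper never reduces the dimension: it shows directly that $e(\m)=1$ by exploiting Goto's theory of $\m$-full ideals. Since each $\ov{I^n}$ is integrally closed and nonzero it is $\m$-full, so $\mu(\ov{\m^n})\le\mu(\ov{I^n})\le\lm(\ov{I^n}/I^n)+\mu(I^n)$; the hypothesis $\ov e_1(I)=e_1(I)$ forces $\lm(\ov{I^n}/I^n)$ to have degree $<d-1$, while $\mu(I^n)\le\binom{n+d-1}{d-1}$, and comparing leading coefficients gives $e(\m)\le 1$. Nagata's theorem (unmixed with $e(\m)=1$) then yields regularity, and a short Rees-algebra argument finishes $\ov{I^n}=I^n$. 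No superficial elements, no specialization of integral closure.

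Your one-dimensional normalization argument and the climb back to regularity are correct and pleasant. The problem is the reduction itself. You need two facts about $A=R/(x_1,\dots,x_{d-1})$: that $A$ is reduced (so that $A$ is analytically unramified, $\xi$ is a nonzerodivisor, and $B/A$ has finite length), and that $\ov{I^n}A=\ov{(IA)^n}$ for $n\gg 0$. You flag these as ``the real work'' and defer to the literature, but the available specialization results---Itoh's Theorem~\ref{s1}, Lemma~\ref{hu}, Hong--Ulrich, Ciuperc\v{a}---all carry hypotheses such as Cohen--Macaulay, normal, or complete normal domain, which is exactly what you are trying to establish. Likewise the Bertini-type reducedness of a generic complete-intersection quotient is not automatic for a non-normal domain. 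So ``granting that input'' is not innocuous here: the input is not known in the generality you need, and the argument as written is circular at that point. The virtue of the $\m$-full approach is precisely that it sidesteps this specialization problem and works without any Cohen--Macaulay assumption on $R$.
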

  We prove that this  result is true for analytically unramified  unmixed local
domains. A local ring $R$ is called {\bf unmixed} if $\dim \hat{R}/\mathcal P=\dim R$ 
for each associated prime ideal $P$ of $\hat{R}.$ 
  In order to prove the theorem we need to
  recall the theory of $\m$-full ideals introduced by Rees. First  we present
Goto's Theorem regarding the
   characterization of regular local rings in terms of $\m$-full ideals. We say
that an ideal $I$  is {\bf $\m$-full}, if $\m I: x = I$ for some $x \in \m.$

  \begin{lemma}{\rm \cite[Lemma 2.2]{goto}}\label{lgoto}
  Let $(R,\m)$ be a $d$-dimensional local ring and $I$ be an $\m$-primary ideal.
Then
  \begin{enumerate}
   \item[\rm {(1)}] $\mu(I)\leq \lm(\m I :x/\m I)=\lm(R/I+xR)+\mu(I+xR/xR)$ for
any element $x$ of $\m$.\\
   \item[{\rm (2)}] If $I$ is $\m$-full then $\mu (J)\leq \mu (I)$ for any ideal
$J$ of $R$ such that $I\subseteq J$. 
  \end{enumerate}
  \end{lemma}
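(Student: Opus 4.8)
The plan is to prove (1) by a single length count coming from the multiplication‑by‑$x$ endomorphism of $R/\m I$, and then to deduce (2) as a short formal consequence of (1).

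For (1): since $I$ is $\m$-primary, $R/\m I$ has finite length, and the $R$-linear map $R/\m I\xrightarrow{x}R/\m I$ has kernel $(\m I:x)/\m I$ and cokernel $R/(\m I+xR)$. As source and target have the same length, exactness of $0\to(\m I:x)/\m I\to R/\m I\xrightarrow{x}R/\m I\to R/(\m I+xR)\to 0$ forces $\lm((\m I:x)/\m I)=\lm(R/(\m I+xR))$. I would then split the right‑hand side along $\m I+xR\subseteq I+xR\subseteq R$ as $\lm(R/(I+xR))+\lm((I+xR)/(\m I+xR))$. Passing to $\ov{R}:=R/xR$ and writing $\ov{I}=(I+xR)/xR$, one checks $\ov{\m}\,\ov{I}=(\m I+xR)/xR$, so $(I+xR)/(\m I+xR)\cong\ov{I}/\ov{\m}\,\ov{I}$; this is a vector space over $\ov{R}/\ov{\m}=R/\m$, whose length is therefore $\mu(\ov{I})=\mu((I+xR)/xR)$. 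That gives the displayed equality. For the inequality, note $\m I\subseteq I$ and, since $x\in\m$, also $I\subseteq\m I:x$ (if $a\in I$ then $ax\in I\m=\m I$); hence $I/\m I$ is a submodule of $(\m I:x)/\m I$ and $\mu(I)=\lm(I/\m I)\le\lm((\m I:x)/\m I)$.

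For (2): choose $x\in\m$ with $\m I:x=I$, witnessing $\m$-fullness. Then the inequality in (1) becomes an equality, so $\mu(I)=\lm((\m I:x)/\m I)=\lm(R/(\m I+xR))$. Now let $J\supseteq I$; we may assume $J$ is proper, hence $\m$-primary. Applying (1) to $J$ with the same $x$ gives $\mu(J)\le\lm((\m J:x)/\m J)=\lm(R/(\m J+xR))$. Since $\m I+xR\subseteq\m J+xR$, the ring $R/(\m J+xR)$ is a quotient of $R/(\m I+xR)$, so $\lm(R/(\m J+xR))\le\lm(R/(\m I+xR))=\mu(I)$. Chaining these inequalities, $\mu(J)\le\mu(I)$.

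I do not expect a genuine obstacle here: the whole argument rests on the single observation that multiplication by $x$ on the finite‑length module $R/\m I$ has kernel and cokernel of equal length, after which everything is bookkeeping with short exact sequences and the inclusion $\m I+xR\subseteq\m J+xR$. The only steps needing a line of care are the isomorphism $(I+xR)/(\m I+xR)\cong\ov{I}/\ov{\m}\,\ov{I}$ in $R/xR$ and the fact that this quotient, being annihilated by $\m$, has length equal to $\mu((I+xR)/xR)$. It is perhaps worth remarking that this route to (2) needs neither an induction on $\dim R$ nor the customary reduction to an infinite residue field.
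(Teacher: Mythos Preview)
Your proof is correct; part (1) is identical to the paper's argument via the exact sequence $0\to(\m I:x)/\m I\to R/\m I\xrightarrow{x}R/\m I\to R/(\m I+xR)\to 0$ followed by the decomposition along $\m I+xR\subseteq I+xR\subseteq R$. For (2) the paper instead runs the analogous multiplication-by-$x$ sequence on $J/\m I$ (kernel $I/\m I$ since $\m I:x=I$ and $I\subseteq J$, cokernel $J/(\m I+xJ)$) to obtain $\mu(I)=\lm(J/(\m I+xJ))\geq\lm(J/\m J)=\mu(J)$ directly, whereas you reuse (1) for both $I$ and $J$ and compare $\lm(R/(\m J+xR))\leq\lm(R/(\m I+xR))$; this is a minor reorganization of the same idea and equally short.
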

  
  \begin{proof}
    {\rm (1)} Consider the exact sequence
   $$0\longrightarrow \m I:x/\m I\longrightarrow R/\m I\xrightarrow{x} R/\m
I\longrightarrow R/\m I+xR\longrightarrow 0.$$
   It follows from the above sequence that $\lm(\m I:x/\m I)=\lm(R/\m I+xR)$.
Since $I\subseteq \m I:x$, we have $\mu (I)=\lm(I/\m I)\leq \lm(\m I:x/\m I)$. 
  The equality follows from
\begin{eqnarray*} 
\lm(R/(I,x))+\mu((I,x)/(x))&=&\lm(R/(I,x))+\lm(I+xR/\m I+xR)\\&=&\lm(R/(\m
I,xR)).
\end{eqnarray*}
  {\rm (2)} Since $I$ is $\m$-full  there exists  $x\in \m$ such that $\m
I:x=I$. Hence $\mu(J)\leq \lm(J/\m I+xJ)$, as $\m I+xJ\subseteq \m J$. From the
exact sequence 
  given below
  $$0\longrightarrow I/\m I\longrightarrow J/\m I\xrightarrow{x} J/\m
I\longrightarrow J/\m I+xJ\longrightarrow 0$$
  it follows that $\mu(I)=\lm(J/\m I+xJ)$. Thus we have $\mu(J)\leq \mu(I)$.   
  \end{proof}
  
  \begin{proposition}{\rm \cite[Proposition 2.3]{goto}}\label{pgoto}
   Let $(R,\m)$ be a local ring of dimension $d\geq 1$ and $I$ be a parameter
ideal. Then
   $I$ is $\m$-full if and only if $R$ is regular and $\lm(I+\m^2/\m^2)\geq
d-1.$
  \end{proposition}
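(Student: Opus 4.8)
The plan is to read off both implications from Lemma~\ref{lgoto}, after recording that a parameter ideal $I$ satisfies $\mu(I)=d$: it is $\m$-primary, so $\mu(I)\ge\h(\m)=d$, and it is generated by a system of parameters, so $\mu(I)\le d$.

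For ``$\m$-full $\Rightarrow$ $R$ regular and $\lm((I+\m^2)/\m^2)\ge d-1$'': first apply Lemma~\ref{lgoto}(2) with $J=\m\supseteq I$ to get $\mu(\m)\le\mu(I)=d$; since always $\mu(\m)\ge\dim R=d$, this gives $\mu(\m)=d$, i.e.\ $R$ is regular. Now take $x\in\m$ with $\m I:x=I$. Then $\lm(\m I:x/\m I)=\lm(I/\m I)=\mu(I)=d$, so Lemma~\ref{lgoto}(1) reads as the \emph{equality} $d=\lm(R/(I,x))+\mu((I,x)/(x))$. As $(I,x)$ is $\m$-primary and proper, $\lm(R/(I,x))\ge 1$; and $(I,x)/(x)$ is primary to the maximal ideal of $R/(x)$, a ring of dimension at least $d-1$, so $\mu((I,x)/(x))\ge d-1$. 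The two lower bounds sum to $d$, hence both are equalities; in particular $\lm(R/(I,x))=1$, i.e.\ $\m=(I,x)$. Reducing this modulo $\m^2$ shows $\m/\m^2$ is spanned by the images of the generators of $I$ together with the image of $x$, whence $\dim_k(I+\m^2)/\m^2\ge\dim_k\m/\m^2-1=d-1$, as wanted.

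For the converse, I would pass to a discrete valuation ring. Assume $R$ is regular of dimension $d$ and $\lm((I+\m^2)/\m^2)\ge d-1$. Choose $x_1,\ldots,x_{d-1}\in I$ with linearly independent images in $\m/\m^2$ and complete them to a minimal generating set $x_1,\ldots,x_d$ of $\m$. Then $\ov R:=R/(x_1,\ldots,x_{d-1})$ is a one-dimensional regular local ring, hence a discrete valuation ring with maximal ideal generated by the image $\ov{x_d}$ of $x_d$, and the image $\ov I$ of $I$ is primary to that maximal ideal because $\ov R/\ov I\cong R/I$ is Artinian; thus $\ov I=(\ov{x_d}^{\,n})$ for some $n\ge 1$. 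Since $\ov{x_d}^{\,n}\in\ov I$ we get $x_d^{\,n}\in I+(x_1,\ldots,x_{d-1})=I$, and then comparing the images of $I$ and of $(x_1,\ldots,x_{d-1},x_d^{\,n})$ in $\ov R$ gives the normal form $I=(x_1,\ldots,x_{d-1},x_d^{\,n})$. To conclude that $I$ is $\m$-full I would verify $\m I:x_d=I$: the inclusion $\supseteq$ is clear since $x_d\in\m$, and for $\subseteq$ one uses $\m I\subseteq(x_1,\ldots,x_{d-1})+(x_d^{\,n+1})$, so that $rx_d\in\m I$ forces $\ov r\,\ov{x_d}\in(\ov{x_d}^{\,n+1})$ in $\ov R$, hence $\ov r\in(\ov{x_d}^{\,n+1}):\ov{x_d}=(\ov{x_d}^{\,n})=\ov I$ and therefore $r\in I+(x_1,\ldots,x_{d-1})=I$.

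The forward implication is essentially a direct application of Lemma~\ref{lgoto}, so the main work is in the converse. There the crux is producing the normal form $I=(x_1,\ldots,x_{d-1},x_d^{\,n})$ after the reduction modulo a partial regular system of parameters; once that is available, both the membership $x_d^{\,n}\in I$ and the colon computation collapse to elementary facts in the discrete valuation ring $\ov R$. I therefore expect the reduction step --- carrying it out cleanly and keeping track of which elements come back into $I$ --- to be the only real obstacle.
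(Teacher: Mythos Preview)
Your forward direction is essentially the same as the paper's: both apply Lemma~\ref{lgoto}(2) with $J=\m$ to get regularity, then use the equality in Lemma~\ref{lgoto}(1) to force $(I,x)=\m$ and read off the bound on $\lm((I+\m^2)/\m^2)$.

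For the converse you take a genuinely different route. The paper does not put $I$ into a normal form at all: it simply picks $x\in\m\setminus\m^2$ with $\m=I+xR$ (possible since $\lm((I+\m^2)/\m^2)\ge d-1$ and $\dim_k\m/\m^2=d$), and then appeals once more to the equality in Lemma~\ref{lgoto}(1) to compute
\[
\lm(\m I:x/\m I)=\lm(R/(I,x))+\mu((I,x)/(x))=\lm(R/\m)+\mu(\m/(x))=1+(d-1)=d=\mu(I)=\lm(I/\m I),
\]
whence $\m I:x=I$. Your argument instead extracts the explicit description $I=(x_1,\ldots,x_{d-1},x_d^{\,n})$ by passing to the DVR $R/(x_1,\ldots,x_{d-1})$ and then checks $\m I:x_d=I$ by hand via the containment $\m I\subseteq(x_1,\ldots,x_{d-1})+(x_d^{\,n+1})$. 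This is correct and gives extra structural information (the normal form for $I$), but it is longer; the paper's length comparison is the slicker path and uses only what was already set up in Lemma~\ref{lgoto}.
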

  
  \begin{proof}
   Let $I$ be $\m$-full. Let $x\in \m$ such that
    $\m I:x=I$. Then by Lemma \ref{lgoto}, we have $\mu(\m)\leq \mu(I)=d$. Thus
$R$ is regular. Note that by Lemma \ref{lgoto}, we have
    $$\lm(R/I+xR)+\mu(I+xR/xR)=\mu(I)=d.$$ But $\lm(R/I+xR)\geq 1$ and
$\mu(I+xR/xR)\geq d-1$. Hence we must have $I+xR=\m .$ Thus $\lm(I+\m
^2/\m^2)\geq d-1$.\\
  \indent Conversely assume that $R$ is regular and $\lm(I+\m^2/\m^2)\geq d-1$.
Choose $x\in \m \setminus \m^2$ such that
  $\m =I+xR$. Then we have $\lm(\m I:x/\m I)=d$ by Lemma \ref{lgoto}. We also
have $\mu(I)=\lm(I/\m I)=d$. Thus $\lm(I/\m I)=\lm( \m I:x/\m I)$ and hence
$I=\m I: x$. Therefore $I$ is $\m$-full.
  \end{proof}

  \begin{theorem}{\rm \cite[Theorem 2.4]{goto}}\label{tgoto}
   Let $(R,\m)$ be a $d$-dimensional local ring such that $R/\m$ is infinite. If
$\ov I=I$ then $I$ is $\m$-full or $I=\sqrt{ (0)}$.
  \end{theorem}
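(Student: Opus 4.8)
The plan is to produce, using that $R/\m$ is infinite, a single element $x\in\m$ with $\m I:x=I$. The inclusion $I\subseteq\m I:x$ always holds, since $xI\subseteq\m I$, so the whole burden is the reverse inclusion, and this is exactly where $\ov I=I$ enters. First I would dispose of two degenerate cases. If $I=R$ then $\m I:x=\m:x=R=I$ for any $x\in\m$. And for \emph{any} ideal $I$ one has $\sqrt{(0)}\subseteq\ov I$, since a nilpotent $z$ satisfies $z^n=0\in I^n$; hence $\ov I=I$ forces $\sqrt{(0)}\subseteq I$, and if $I=\sqrt{(0)}$ we are in the stated exceptional alternative. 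So assume $\sqrt{(0)}\subsetneq I\subsetneq R$. Then $I$ is a non-nilpotent proper ideal, so it has a nonempty finite set of Rees valuations $v_1,\dots,v_s$, each a discrete valuation of $R$ modulo a minimal prime, with the defining property $\ov I=\{a\in R:v_i(a)\ge v_i(I)\text{ for all }i\}$, where $v_i(I)=\min\{v_i(g):g\in I\}$ (see \cite[Ch.~10]{hs}).

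The core step is then to choose $x\in\m$ with $v_i(x)=v_i(\m)$ for every $i$, where $v_i(\m)=\min\{v_i(m):m\in\m\}$. Granting this, let $a\in\m I:x$, so $ax\in\m I$. Since every element of $\m I$ is a finite sum of products $mg$ with $m\in\m$ and $g\in I$, for each $i$ we get $v_i(a)+v_i(x)=v_i(ax)\ge v_i(\m)+v_i(I)$; cancelling $v_i(x)=v_i(\m)$ yields $v_i(a)\ge v_i(I)$. Hence $a\in\ov I=I$, so $\m I:x\subseteq I$, and $I$ is $\m$-full.

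I expect the main obstacle to be this choice of $x$: it is the only place the infinitude of $R/\m$ is used, and the hypothesis is genuinely needed, since over a finite residue field there are integrally closed ideals that are not $\m$-full. I would argue by general position. Fix a \emph{minimal} generating set $\m=(y_1,\dots,y_n)$, so that $\bar y_1,\dots,\bar y_n$ is a $k$-basis of $\m/\m^2$ with $k=R/\m$, and write $x=\sum_j c_j y_j+w$ with $w\in\m^2$; one always has $v_i(x)\ge v_i(\m)$. When $v_i$ is centred at $\m$ (equivalently $v_i(\m)\ge 1$, which forces $v_i(w)>v_i(\m)$), strict inequality $v_i(x)>v_i(\m)$ occurs precisely when the residues $\bar c_j$ satisfy one fixed nontrivial $k$-linear equation, read off from the leading terms of those $y_j$ of least $v_i$-value; when $v_i$ is not centred at $\m$ (equivalently $v_i(\m)=0$), one needs only that $x$ avoid the centre of $v_i$, again a proper-subspace condition modulo $\m^2$. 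In either case the ``bad'' $\bar x$ form a proper $k$-subspace of $\m/\m^2$, and since $k$ is infinite $\m/\m^2$ is not the union of the finitely many such subspaces; any $\bar x$ lying outside all of them, lifted arbitrarily to $x\in\m$, works for every $i$ at once. The delicate point is verifying that ``$v_i(x)>v_i(\m)$'' really is controlled by a single $k$-linear condition --- which requires handling the residue field of $v_i$ (possibly a transcendental extension of $k$) correctly and is what forces the generating set to be minimal --- together with the check that the perturbation $w\in\m^2$ cannot spoil the equality $v_i(x)=v_i(\m)$. Once this is secured, the $\m$-primary case is the classical generic-element choice, and the non-$\m$-primary case is covered by the same valuation inequality with no extra work.
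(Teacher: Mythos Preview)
The paper does not supply a proof of this result; it merely quotes Goto \cite{goto} and uses the statement as a black box in the proof of Theorem~\ref{thm1}. So there is no ``paper's own proof'' to compare against.

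That said, your argument via Rees valuations is the standard route and is essentially correct. The reduction to choosing $x\in\m$ with $v_i(x)=v_i(\m)$ for every Rees valuation $v_i$ of $I$, followed by the cancellation $v_i(a)\ge v_i(I)$ from $ax\in\m I$, is exactly the mechanism one wants. Your general-position argument for the choice of $x$ is also sound: for each $v_i$ centred at $\m$ the leading-coefficient map $\m/\m^2\to \kappa_{v_i}$ is $k$-linear and nonzero, so its kernel is a proper subspace; for $v_i$ centred at a smaller prime $\mathfrak p_i$, Nakayama forces $(\mathfrak p_i+\m^2)/\m^2\subsetneq\m/\m^2$. Since $k$ is infinite, one avoids the finite union.

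One technical point worth making explicit: when $R$ is not a domain, some Rees valuation $v_i$ may live over a minimal prime $P_i$ that \emph{contains} $I$, so that $v_i(I)=\infty$. Your displayed characterization of $\ov I$ then requires $v_i(a)=\infty$, i.e.\ $a\in P_i$. This is still delivered by your choice of $x$: once $\sqrt{(0)}\subsetneq I\subsetneq R$ we have $\dim R\ge 1$, hence $P_i\subsetneq\m$ and $v_i(\m)<\infty$; the condition $v_i(x)=v_i(\m)$ gives $x\notin P_i$, and then $ax\in\m I\subseteq P_i$ forces $a\in P_i$. So the argument goes through unchanged, but it would strengthen the write-up to say so.
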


  \begin{theorem}{\rm \cite[Theorem 1]{mtv}}\label{thm1}
   Let $(R,\m)$ be a $d$-dimensional analytically unramified unmixed local ring
and $I$ be a parameter ideal. If $\ov e_1(I)=e_1(I)$ then 
  $R$ is a regular ring and  $\ov {I^n}=I^n$ for all $n.$
 
  \end{theorem}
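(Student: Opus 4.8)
The plan is to reduce the statement to Goto's criterion (Theorem \ref{tgoto} together with Proposition \ref{pgoto}): it suffices to show that under the hypothesis $\ov e_1(I)=e_1(I)$, the integral closure $\ov I$ is $\m$-full and forces $R$ to be regular, and then to upgrade regularity to the equality $\ov{I^n}=I^n$ for all $n$. First I would dispose of the usual reductions: we may pass to the faithfully flat extension $R[X]_{\m R[X]}$ to assume $R/\m$ is infinite (this preserves unmixedness of the completion, parameter ideals, integral closures of powers, and all the Hilbert coefficients), and we may complete to assume $R$ is analytically unramified and unmixed in the strong sense that $\dim R/P = \dim R$ for every $P \in \Ass(R)$. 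Since $R$ is now reduced and equidimensional, $\ov I$ is $\m$-primary and $\ov I \neq \sqrt{(0)}=0$ unless $d=0$, so Theorem \ref{tgoto} gives that $\ov I$ is $\m$-full.

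The heart of the argument is to extract from $\ov e_1(I)=e_1(I)$ the inequality $\mu(\ov I) \le d$, which by Lemma \ref{lgoto}(2) applied to $I \subseteq \ov I$ (using $\m$-fullness of $\ov I$) and the observation $\mu(I)=d$ would give... wait, that inequality goes the wrong way; rather the point is that Lemma \ref{lgoto}(2) yields $\mu(\ov I) \le \mu(I) = d$ automatically once $\ov I$ is $\m$-full, but we still need to locate $x$. The key input is a comparison of the two filtrations via their Chern numbers: by Theorem \ref{M} we always have $\ov e_1(I) \ge e_1(I)$, and I would show that equality propagates down to first level, namely $\lm(\ov I/I)$ is controlled. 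Concretely, working with a minimal reduction $J$ of $\{\ov{I^n}\}$ (which is also a minimal reduction of $I$ since $R$ is Cohen-Macaulay --- but here $R$ need not be Cohen-Macaulay, so instead I would use that $e_1(I)=e_1(J)+(\text{correction})$ only in the CM case). The cleaner route: from $\ov e_1(I)=e_1(I)$ deduce via the Huckaba--Marley-type lower bound $\ov e_1(I) \ge e_1(I) + \lm(\ov I / I)$ that $\lm(\ov I/I)=0$, hence $\ov I = I$, hence $I$ itself is integrally closed and therefore $\m$-full by Theorem \ref{tgoto}. The main obstacle is establishing precisely this refined inequality $\ov e_1(I) \ge e_1(I) + \lm(\ov I/I)$ in the \emph{unmixed} (not necessarily Cohen-Macaulay) setting, since the slick proof of Theorem \ref{M} uses superficial elements to reduce to $\dim R = 1$ and the further length contributions require care; I expect one must compare $\ov H_I(n) = \lm(R/\ov{I^n})$ with $H_I(n)=\lm(R/I^n)$ and track the defect $\lm(\ov{I^n}/I^n)$, whose value at $n=1$ survives into the difference of Hilbert polynomials precisely because unmixedness prevents embedded contributions from masking it.

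Once $I = \ov I$ is $\m$-full and $R$ is regular: by Proposition \ref{pgoto}, $\lm(I+\m^2/\m^2) \ge d-1$, and since $I$ is a parameter ideal of the regular local ring $R$ with $\mu(I)=d$, after choosing $x \in \m \setminus \m^2$ with $\m = I + xR$ we can arrange (by a change of minimal generators) that $I = (x_1,\ldots,x_d)$ with $x_1,\ldots,x_{d-1} \in \m \setminus \m^2$ forming part of a regular system of parameters and $x_d \in \m$. Then $I$ is generated by a regular sequence, so $G(I) \cong (R/I)[T_1,\ldots,T_d]$ is a polynomial ring; in particular $G(I)$ is Cohen-Macaulay and $I$ is normal (a regular sequence in a normal --- indeed regular --- ring generates a normal ideal, since the Rees algebra $R[It]$ is a polynomial-like extension that is normal when $R$ is). Therefore $\ov{I^n} = I^n$ for all $n$, which is the second assertion, and we are done. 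I would devote the write-up mostly to the inequality in the second paragraph, citing the Huckaba--Marley framework and Marley's degree-by-degree length estimates, and treat the regular-ring endgame briefly since it is standard once $\m$-fullness of $\ov I = I$ is in hand.
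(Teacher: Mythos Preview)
Your proposal has two genuine gaps, and the paper's actual argument is quite different.

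\textbf{Gap 1: the unproven inequality.} Your entire route to regularity hinges on the refined inequality $\ov e_1(I)\ge e_1(I)+\lm(\ov I/I)$, which you yourself flag as ``the main obstacle'' in the unmixed non-Cohen--Macaulay setting. You never prove it, and the vague appeal to ``unmixedness prevents embedded contributions from masking it'' is not an argument. Theorem~\ref{M} only gives $\ov e_1(I)\ge e_1(I)$; the Huckaba--Marley bound (Theorem~\ref{HM}) that would give the extra $\lm(\ov I/I)$ term requires $R$ to be Cohen--Macaulay, which is exactly what you are trying to prove. Without this inequality you cannot conclude $\ov I=I$, and the rest of your plan collapses.

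\textbf{Gap 2: the normality claim.} Even granting $R$ regular and $I=\ov I$, your final step is wrong. You assert that ``a regular sequence in a normal --- indeed regular --- ring generates a normal ideal, since the Rees algebra $R[It]$ is a polynomial-like extension that is normal when $R$ is.'' This is false: take $R=k[[x,y]]$ and $I=(x^2,y^2)$; then $xy\in\ov I\setminus I$, so $I$ is not even integrally closed, and the Rees algebra $R[x^2t,y^2t]$ is not normal since $(xyt)^2=(x^2t)(y^2t)$. The associated graded ring being a polynomial ring over $R/I$ says nothing about normality of the Rees algebra.

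\textbf{What the paper actually does.} The paper never tries to show $\ov I=I$ first. Instead it bounds the multiplicity of $\m$ by comparing growth rates: since each $\ov{I^n}$ is $\m$-full (Theorem~\ref{tgoto}), Lemma~\ref{lgoto}(2) gives $\mu(\ov{\m^n})\le\mu(\ov{I^n})$, and one has
\[
\lm(\ov{\m^n}/\ov{\m^{n+1}})\le\mu(\ov{\m^n})\le\mu(\ov{I^n})\le\lm(\ov{I^n}/\m I^n)=\lm(\ov{I^n}/I^n)+\mu(I^n).
\]
The left side is a polynomial in $n$ of degree $d-1$ with leading coefficient $e(\m)/(d-1)!$; on the right, $\mu(I^n)\le\binom{n+d-1}{d-1}$ and the hypothesis $\ov e_1(I)=e_1(I)$ forces $\lm(\ov{I^n}/I^n)$ to have degree $<d-1$. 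Comparing leading terms yields $e(\m)=1$, whence $R$ is regular by Nagata's theorem (this is where unmixedness is used). The equality $\ov{I^n}=I^n$ for all $n$ is then obtained by a height argument: if some $\ov{I^n}\ne I^n$, then since $\mathcal R(I)$ is Cohen--Macaulay the conductor $\mathcal R(I):\ov{\mathcal R}(I)$ has height~$1$, so $\dim\ov{\mathcal R}(I)/\mathcal R(I)=d$ and $\lm(\ov{I^n}/I^n)$ would have degree $d-1$, contradicting $\ov e_1(I)=e_1(I)$.
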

  
  \begin{proof}

  Since $R$ is analytically unramified, $\lm(\ov{\m^n}/\ov{\m^{n+1}})$ is a
polynomial function of degree $d-1$ with leading coefficient $e(\m).$ Note that 
   $$\lm(\ov{\m^n}/\ov{\m^{n+1}})\leq \lm(\ov{\m^n}/\m
\ov{\m^n})=\mu(\ov{\m^n})\leq \mu(\ov{I^n}).$$
   The last inequality holds because $\ov{I^n}$ is $\m$-full by Theorem
\ref{tgoto}. We also have
   $$\mu(\ov{I^n})=\lm(\ov{I^n}/\m\ov{I^n})\leq \lm(\ov{I^n}/\m
I^n)=\lm(\ov{I^n}/I^n)+\mu(I^n).$$
   Since $\ov e_1(I)=e_1(I),$ \; $\lm(\ov{I^n}/I^n)$ is a polynomial function of
degree $<d-1$ while $\mu(I^n)={n+d-1\choose d-1}$. Thus we have $e(\m)=1$. Since
$R$ is 
  unmixed by Nagata's Theorem in \cite{nag}, $R$ is regular. 
 Without loss of generality we may assume that $R$ is a complete local ring. 

Let
$\ov{\mathcal R}(I)=\oplus_{n\geq 0}\ov{I^n}t^n$ and $\mathcal
R(I)=\oplus_{n\geq 0}I^n
  t^n$. Observe that $\ov{\mathcal R}(I)/\mathcal R(I)$ is a finite graded 
$\mathcal R(I)$-module. Since $\ov e_1(I)=e_1(I)$ the polynomial function
   $\lm(\ov{I^n}/I^n)$ is of degree $<d-1$. Suppose that there exists  
  $n$ such $\ov {I^n}\not=I^n$. As $\mathcal R(I)$ is 
  Cohen-Macaulay, it is the intersection of its localizations at the minimal
primes of principal ideals by \cite[Theorem 53]{k}. This shows that $\h \left[
\mathcal R(I):_{\mathcal R(I)}\ov{\mathcal R}(I)\right] =1$. Thus $\dm
\ov{\mathcal R}(I)/
  \mathcal R(I)=d$. Hence $\lm(\ov{I^n}/I^n)$ is a polynomial function of degree
$d-1$, which is a contradiction. Therefore $\ov{I^n}=I^n$ for all $n.$   
  \end{proof}

\section{The Positivity Conjecture}
At a conference held in 2008 in Yokohama, Japan, Wolmer Vasconcelos \cite{vas}
announced several conjectures about the the Chern number of a parameter ideal
and the normal Chern number of an $\m$-primary ideal in a Noetherian local ring
$(R,\m).$  First we quote his conjecture for the normal Chern number and then
sketch a solution of the conjecture.
\begin{conjecture}{\rm \cite[Vasconcelos]{vas}}
 Let $(R,\m)$ be a $d$-dimensional analytically unramified local ring and let
$I$ be an $\m$-primary ideal. Then $\ov e_1(I)\geq 0$.
\end{conjecture}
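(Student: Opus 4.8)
\emph{Proof idea.} The assertion as stated needs the extra hypothesis that $R$ be \emph{unmixed}: for $R=k[[x,y,z]]/(xz,yz)$, which is reduced and complete (hence analytically unramified), one has $\ov{\m^n}=(x,y)^nR+(z^n)R$, so $\lm(R/\ov{\m^n})=\binom{n+1}{2}+(n-1)$ and $\ov{e_1}(\m)=-1$, the deficit coming from the one‑dimensional component. So I take $R$ unmixed and describe the proof of $\ov{e_1}(I)\ge 0$ in that case (the theorem of Goto--Hong--Mandal); the plan is to descend to the Cohen--Macaulay case, i.e. to Theorem~\ref{M}.

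By the usual reductions — passing to $R[X]_{\m R[X]}$ and then to the completion — one may assume $R$ complete with infinite residue field; being analytically unramified and unmixed, $R$ is then reduced and equidimensional. Let $P_1,\dots,P_s$ be its minimal primes, let $S_i$ be the integral closure of $R/P_i$ in its fraction field (a complete normal local domain of dimension $d$), and put $\ov R=\prod_iS_i$; by Nagata's theorem $\ov R$ is module‑finite over $R$. Since $R$ is reduced, every valuation overring of $R$ contains $\ov R$, so $\ov{I^n}=\ov{I^n\ov R}\cap R$ and
$$\lm_R(R/\ov{I^n})=\lm_R(\ov R/\ov{I^n\ov R})-\lm_R\big(\ov R/(R+\ov{I^n\ov R})\big).$$
The first term equals $\sum_i[S_i/\m S_i:R/\m]\,\ov P_{IS_i}(n)$, a polynomial of degree $d$; and because $R$ is equidimensional, $\ov R/R$ is a finite $R$‑module of dimension $\le d-1$, so for $n\gg0$ the second term is a polynomial of degree $\le d-1$ with nonnegative leading coefficient. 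Equating the coefficients of $\binom{x+d-1}{d}$ and of $\binom{x+d-2}{d-1}$ yields $\ov{e_0}(I)=\sum_i[S_i/\m S_i:R/\m]\,\ov{e_0}(IS_i)$ and
$$\ov{e_1}(I)\;\ge\;\sum_i[S_i/\m S_i:R/\m]\,\ov{e_1}(IS_i),$$
so it suffices to prove $\ov{e_1}(J)\ge0$ for an $\m$‑primary ideal $J$ in a complete \emph{normal} local domain of dimension $d$.

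If $d\le 2$ a normal local ring is \CM{} (Serre's criterion), so $\ov{e_1}(J)\ge0$ by Theorem~\ref{M}; this also starts an induction on $d$. If $d\ge3$ one uses the extra structure of the normal filtration: the extended normal Rees algebra $\bigoplus_{n\in\Z}\ov{J^n}t^n$, being the integral closure of $R[Jt,t^{-1}]$ in $R[t,t^{-1}]$ with $R$ normal, is itself normal, hence $S_2$; its local cohomology with respect to the maximal graded ideal therefore vanishes in cohomological degrees $0$ and $1$, and so $\ov G(J)$ has positive depth. Feeding this vanishing into the local‑cohomology description of $\ov H_J-\ov P_J$ (the Blancafort--Itoh formula treated in Section~4) writes $\ov{e_1}(J)$ as a sum of lengths of local cohomology modules which are then manifestly nonnegative. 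An alternative inductive route: a sufficiently general $x\in J$ is superficial for $\{\ov{J^n}\}$, and as $R$ is $R_1$, $S_2$ and catenary equidimensional the quotient $R/(x)$ is a reduced, equidimensional complete local ring of dimension $d-1\ge2$ — an unmixed analytically unramified ring — while $\ov{e_1}(J)=e_1\big(\{\ov{J^n}(R/(x))\}\big)$ since $d\ge3$; one then checks that for general $x$ this filtration agrees in high degrees with the normal filtration of $R/(x)$ and applies the inductive hypothesis.

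The whole difficulty is concentrated in the normal case with $d\ge3$: the descent steps are routine manipulations with superficial elements, Nagata finiteness, and comparison of Hilbert polynomials under finite birational extensions, whereas converting $S_2$‑ness of the normal Rees algebra into $\ov{e_1}(J)\ge0$ — the $\ov{e_1}$‑level analogue of Itoh's local cohomology arguments that give $\ov{e_2}\ge0$ and $\ov{e_3}\ge0$ — or, equivalently, showing that a general hyperplane section commutes asymptotically with the integral‑closure filtration of the powers of $J$, is where the real work lies.
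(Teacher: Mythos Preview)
Your outline is correct and matches the paper's proof (Theorem~\ref{pos}, due to Goto--Hong--Mandal) essentially step for step: the need for unmixedness, the passage to the completion and then to the integral closure $\ov R=\prod_{\p}S(\p)$ to obtain the inequality $\ov e_1(I)\ge\sum_{\p}[S(\p)/\m_{S(\p)}:R/\m]\,\ov e_1(IS(\p))$, the base case $d=2$ via Serre's criterion, and the inductive step for $d\ge3$ by cutting a complete normal domain with a generic hyperplane section. One small point of precision: the paper does not use a general $x\in J$ in $R$ itself but rather the generic element $x=\sum a_iz_i$ in the extension $R'=R[z_1,\ldots,z_d]_{\m R[z_1,\ldots,z_d]}$; the crucial Lemma~\ref{hu} (from Itoh and Goto--Hong--Mandal) is stated for that generic quotient $D'=R'/(x)$, and this is exactly your ``asymptotic commutation'' statement $\ov{I^n}D'=\ov{I^nD'}$ for $n\gg0$.

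Your alternative route (a), deducing $\ov e_1(J)\ge0$ directly from the $S_2$-ness of the normal extended Rees algebra via the Blancafort formula, is not what the paper does, and as you yourself note it is not a complete argument: vanishing of $H^0$ and $H^1$ alone does not express $\ov e_1$ as a manifestly nonnegative quantity (contrast with $\ov e_2$ and $\ov e_3$, where evaluation of the Blancafort formula at $n=0$ in the appropriate dimension does the job after Itoh's deeper vanishing Theorem~\ref{s2}). The paper sticks to the inductive hyperplane-section argument, which is your route (b).
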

 We show that the Positivity Conjecture holds for the filtration
$\overline{\m^n}$ where $\m$ is the maximal homogeneous ideal of the face ring
of a pure simplicial
  complex $\Delta.$ Let $\Delta$ be a $(d-1)$-dimensional simplicial complex.
Let $f_i$ denote the number of $i$-dimensional faces of
  $\Delta$ for $i=-1,0, \ldots, d-1.$ Here $f_{-1}=1.$ Let $\Delta$ have $n$
vertices $\{v_1, v_2, \ldots, v_n\}.$ Let $x_1,x_2, \ldots,x_n$ be
indeterminates over a field $k.$ The ideal $I_{\Delta}$ of $\Delta$ is the ideal
generated by the square free monomials
  $x_{a_1}x_{a_2}\ldots x_{a_m}$ where $1 \leq a_1 < a_2 < \cdots < a_m \leq n$
and $\{v_{a_1},v_{a_2}, \ldots,v_{a_m}\} \notin \Delta.$   The face ring of
$\Delta$ over a field $k$ is defined as 
$k[\Delta]=k[x_1,x_2,\ldots,x_n]/I_{\Delta}.$
  
  \begin{lemma}\label{red}
   Let $R$ be a Noetherian ring and $I$ be an ideal of $R$ such that the
associated graded ring $G(I)=\bigoplus_{n=0}^\infty I^n/I^{n+1}$ is reduced.
Then $\ov{I^n}=I^n$ for all $n$.
  \end{lemma}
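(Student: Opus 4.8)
The plan is to exploit the fact that a reduced graded ring has no nonzero nilpotents in any degree, and to translate this into a statement about the integral closure of the powers of $I$. First I would recall the standard description of the integral closure in terms of the Rees algebra: for an element $a \in R$, one has $a \in \ov{I^n}$ if and only if the image of $at^n$ in the Rees algebra $\R(I) = \bigoplus_{m \geq 0} I^m t^m$ is integral over $\R(I)$, i.e. lies in the integral closure $\ov{\R(I)}$ inside $R[t]$. So it suffices to show that $\R(I)$ is integrally closed in $R[t]$, which would immediately give $\ov{I^n} = I^n$ for every $n$ by reading off degree $n$. Thus the lemma reduces to: \emph{if $G(I)$ is reduced then $\R(I)$ is integrally closed in $R[t]$.}

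The key step is an argument by induction on degree (equivalently, on the order of the element in the $I$-adic filtration). Suppose $f = at^n \in R[t]$ (homogeneous of degree $n$) is integral over $\R(I)$; I want $a \in I^n$. Since $f$ is integral over $\R(I) \subseteq R[t]$ and $R[t]$ is a polynomial ring, $f$ is in particular integral over $R[t]$, but more usefully we use that $a \in \ov{I^n}$, so certainly $a \in \ov{I}$ — in particular, looking at the associated graded ring, one shows the leading form behaves well. The cleanest route: it is enough to prove $\ov{I^n} \subseteq I^n + \ov{I^{n+1}}$ for all $n$, because then by iterating (using that $I$ is $R$-finitely generated so $\bigcap_k (I^n + \ov{I^{n+k}}) = \bigcap_k (I^n + I^{n+k} \cdot (\text{something}))$ collapses via Krull's intersection theorem applied to $\ov{I^n}/I^n$ as a finitely generated module over the Noetherian ring, once one knows it lies in $\bigcap_k \m^k$-type intersections — more precisely $\ov{I^n}/I^n$ being finite and killed in the limit). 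Concretely: take $a \in \ov{I^n}$; its image $a^* \in I^n/I^{n+1} \subseteq G(I)$ satisfies an integral equation over $G(I)$, and since $a$ satisfies $a^m + c_1 a^{m-1} + \cdots + c_m = 0$ with $c_j \in I^{jn}$, passing to leading forms in $G(I)$ gives $(a^*)^m = 0$ in $G(I)$ (the other terms have strictly higher order, hence vanish in the degree-$mn$ graded piece, or contribute lower — one checks the degrees line up so that only $(a^*)^m$ survives). Since $G(I)$ is reduced, $a^* = 0$, i.e. $a \in I^{n+1}$. So in fact $\ov{I^n} \subseteq I^{n+1} + (\text{already in } I^n)$, and actually we get directly $a \in I^{n+1} \cap \ov{I^n}$; repeating the argument with $n$ replaced by $n+1$ shows $a \in I^{n+2}$, and so on, whence $a \in \bigcap_{k \geq n} I^k$.

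To finish I would handle the intersection. The element $a \in \ov{I^n}$ we have shown lies in $\bigcap_{k} I^k$; but $a \in \ov{I^n}$ and $\ov{I^n}$ is a finitely generated module, and $\bigcap_k I^k \cdot M = 0$ for $M = \ov{I^n}$ need not hold unless we localize, so instead I would argue more carefully on leading forms directly: the correct statement to extract is that the natural map $I^n/I^{n+1} \to \ov{I^n}/\ov{I^{n+1}}$ has image exactly $I^n/(I^n \cap \ov{I^{n+1}})$ and the reducedness of $G(I)$ forces $\ov{I^n} \cap (\text{order} > n \text{ part}) $ to already be reached by $I$, i.e. one proves $\ov{I^n} = I^n + \ov{I^{n+1}}$ cleanly and then notes $\ov{I^{n+1}} = I^{n+1} + \ov{I^{n+2}} \subseteq I^n + \ov{I^{n+2}}$, giving $\ov{I^n} = I^n + \ov{I^{n+k}}$ for all $k \geq 1$, and since $\ov{I^{n+k}} \subseteq I^{n+k-c}$ for a fixed $c$ (by the $I$-admissibility of $\{\ov{I^m}\}$, which holds here — or directly by Noetherianness of the Rees algebra of the integral closure filtration), we get $\ov{I^n} \subseteq \bigcap_{k} (I^n + I^{n+k-c}) = I^n$ by Krull's intersection theorem in $R/I^n$... but $R/I^n$ need not be local-complete, so I would instead invoke Krull's intersection theorem for the ideal $I$ in the Noetherian ring $R$ applied to the submodule $(I^n + \ov{I^{n+k}})/I^n$ of the finite module $\ov{I^n}/I^n$, whose intersection over all $k$ is zero because it is contained in $\bigcap_k I^{k}\cdot (\ov{I^n}/I^n) = 0$. \textbf{The main obstacle} is precisely this last passage from ``$\ov{I^n} = I^n + \ov{I^{n+1}}$ for all $n$'' to ``$\ov{I^n} = I^n$'': one must be careful that $R$ is only Noetherian (not local here, despite the ambient survey setting), so the finiteness of $\ov{I^n}/I^n$ as an $R$-module together with a Krull-intersection/Artin--Rees argument is the step that needs the most care; everything before it (reducing to leading forms, using reducedness of $G(I)$) is a short and standard manipulation.
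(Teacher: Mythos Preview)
Your core idea---use an integral equation for $a \in \ov{I^n}$ to force the leading form of $a$ in $G(I)$ to be nilpotent, hence zero by reducedness---is sound and yields a proof quite different from the paper's, but your execution is tangled and the ``main obstacle'' you worry about is a phantom created by a misstep earlier on. The problem is that you write ``its image $a^* \in I^n/I^{n+1}$'', which presupposes $a \in I^n$, exactly what you are trying to prove; having then (circularly) concluded $a \in I^{n+1}$, you wander into an iterated argument and Krull-intersection concerns that are unnecessary, and along the way you invoke $I$-admissibility of $\{\ov{I^m}\}$, which is \emph{not} available here ($R$ is merely Noetherian, $I$ is not assumed $\m$-primary, and the normal Rees algebra need not be finite). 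The clean version of your idea is this: given $a \in \ov{I^n}$, set $k = \ord_I(a) = \sup\{j : a \in I^j\}$. If $k \geq n$ (including $k = \infty$) we are done. If $k < n$, then from $a^m + c_1 a^{m-1} + \cdots + c_m = 0$ with $c_j \in I^{jn}$ one has $c_j a^{m-j} \in I^{jn + (m-j)k} \subseteq I^{mk+1}$ for each $j \geq 1$, since $jn + (m-j)k - mk = j(n-k) > 0$; hence $a^m \in I^{mk+1}$, so the genuine leading form $a^* \in I^k/I^{k+1}$ satisfies $(a^*)^m = 0$ in $G(I)$, contradicting reducedness. No Artin--Rees, no admissibility, no intersection argument.

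For comparison, the paper's proof is structurally different: it passes to the extended Rees ring $\mathcal{R}(I) = \bigoplus_{n \in \Z} I^n t^n$ and uses $G(I) \cong \mathcal{R}(I)/(u)$ with $u = t^{-1}$. Reducedness of $G(I)$ means $(u)$ is an intersection of height-one primes $P_i$; each localization $\mathcal{R}(I)_{P_i}$ is then a DVR, so $(u^n) = \bigcap_i P_i^{(n)}$ is integrally closed for every $n$, and contracting to $R$ gives $I^n = (u^n) \cap R$ integrally closed. Once straightened out, your leading-form argument is more elementary (no extended Rees ring, no symbolic powers); the paper's route, on the other hand, makes the height-one structure behind the phenomenon transparent and explains at once why \emph{all} powers are simultaneously integrally closed.
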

  
  \begin{proof}
   Let $\mathcal R(I)=\bigoplus_{n \in \Z} I^nt^n$ denote the extended Rees ring
of $I.$  Since $G(I) \simeq \mathcal R(I)/(u)$, where $u=t^{-1},$  is reduced,
$(u)=P_1\cap P_2 \cap \ldots \cap P_r$ for some height one prime ideals $P_1,
\ldots, P_r$ of $\mathcal R(I).$ Therefore
  $(u)$ is integrally closed in $\mathcal R(I).$ As  $P_i\mathcal
R(I)_{P_i}=(u)\mathcal R(I)_{P_i}$ for all $i,$ 
  $\mathcal R(I)_{P_i}$ is a DVR for all $i.$ Since $u$ is  
  regular, $\text{Ass}(\mathcal R(I)/(u^n))=\{P_1, P_2, \ldots, P_r\}$
  for all  $n\geq 1.$ Thus $(u^n)=\cap_{i=1}^rP_i^{(n)}$ is integrally closed.
Hence  ${I^n}=(u^n)\cap R$  is integrally closed for all $n.$
  \end{proof}

   \begin{theorem}\label{chernface}
    Let $\Delta$ be a simplicial complex of dimension $d-1$. Let $\m$ denote the
maximal homogeneous ideal of the face ring $k[\Delta]$ over a field $k$. Then
    \begin{enumerate}
 \item [{\rm  (1)}] $\ov {\m^n}=\m^n$ for all $n$.
 \item[{\rm  (2)}] $\ov e_1(\m)=e_1(\m)=df_{d-1}-f_{d-2}$.
 \item[ {\rm (3)}] If $\Delta $ is pure then $\ov{e}_1(\m)=e_1(\m)\geq 0.$
    \end{enumerate}
   \end{theorem}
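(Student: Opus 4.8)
The plan is to prove the three parts in turn, with (2) and (3) resting on (1).

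\emph{Part (1).} Since $k[\Delta]$ is standard graded and $\m=\bigoplus_{n>0}k[\Delta]_n$ is its irrelevant maximal ideal, we have $\m^n=\bigoplus_{j\ge n}k[\Delta]_j$, hence $\m^n/\m^{n+1}\cong k[\Delta]_n$, so that $G(\m)\cong k[\Delta]$ as graded rings. The Stanley--Reisner ideal $I_\Delta$ is generated by squarefree monomials, hence is radical, so $k[\Delta]=k[x_1,\dots,x_n]/I_\Delta$ is reduced; therefore $G(\m)$ is reduced. Applying Lemma \ref{red} with $R=k[\Delta]$ and $I=\m$ gives $\ov{\m^n}=\m^n$ for all $n$.

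\emph{Part (2).} By (1) the normal filtration $\{\ov{\m^n}\}$ is the $\m$-adic filtration, so $\ov e_i(\m)=e_i(\m)$ for all $i$, and it remains to compute $e_1(\m)$. I would start from the classical fact that the monomials whose support is a face of $\Delta$ form a $k$-basis of $k[\Delta]$, which gives $\dim_k k[\Delta]_j=\sum_{i=0}^{d-1}f_i\binom{j-1}{i}$ for $j\ge 1$. Since $\m^{j}/\m^{j+1}\cong k[\Delta]_j$, the Hilbert series of the $\m$-adic filtration coincides with that of $k[\Delta]$, namely
$$\sum_{j\ge 0}\dim_k k[\Delta]_j\,t^j=\sum_{i=-1}^{d-1}\frac{f_i\,t^{i+1}}{(1-t)^{i+1}}=\frac{h(t)}{(1-t)^d},\qquad h(t)=\sum_{i=-1}^{d-1}f_i\,t^{i+1}(1-t)^{d-1-i}.$$
For a filtration with Hilbert series $h(t)/(1-t)^d$, $h\in\Z[t]$, one has $e_k=\sum_j\binom{j}{k}h_j$, so $e_0(\m)=h(1)$ and $e_1(\m)=h'(1)$. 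In $h(t)$ only the term $i=d-1$ survives at $t=1$, giving $h(1)=f_{d-1}$; and only the terms $i=d-1,d-2$ contribute to $h'(1)$, giving $h'(1)=df_{d-1}-f_{d-2}$. Hence $\ov e_1(\m)=e_1(\m)=df_{d-1}-f_{d-2}$. (Equivalently, summing $\dim_k k[\Delta]_j$ over $j<n$ by the hockey-stick identity yields $\ov P_\m(x)=\sum_{i=-1}^{d-1}f_i\binom{x-1}{i+1}$, and $e_1$ is read off by comparing the coefficients of $x^{d-1}$.)

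\emph{Part (3).} Suppose $\Delta$ is pure of dimension $d-1$. Then each $(d-2)$-dimensional face lies in some facet (extend it to a maximal face, of dimension $d-1$ by purity), while each facet has $d$ vertices and hence exactly $d$ faces of dimension $d-2$. Counting the incident pairs $(\sigma,\tau)$ with $\tau$ a facet, $\sigma$ a $(d-2)$-face, and $\sigma\subset\tau$ in two ways gives $df_{d-1}=\sum_{\sigma}\#\{\tau:\sigma\subset\tau\}\ge f_{d-2}$, whence $\ov e_1(\m)=e_1(\m)=df_{d-1}-f_{d-2}\ge 0$. The only step needing real care is the Hilbert-series bookkeeping in Part (2); (1) is immediate from Lemma \ref{red} and (3) is a one-line double count.
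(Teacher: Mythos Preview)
Your proof is correct and follows essentially the same route as the paper's: Part~(1) via $G(\m)\cong k[\Delta]$ reduced plus Lemma~\ref{red}; Part~(2) via the $f$-vector/$h$-vector relation and $e_1(\m)=h'(1)$ (the paper simply cites \cite[Lemma 5.1.8, Proposition 4.1.9]{bh} where you compute $h'(1)$ explicitly); Part~(3) via the same counting argument, which you phrase cleanly as a double count of incident pairs.
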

   
   \begin{proof}
   {\rm (1)} Since $k[\Delta]$ is a standard graded $k$-algebra,
$G(\m)=k[\Delta].$ Hence $G(\m)$ is reduced and then by Lemma \ref{red},
$\ov{\m^n}=\m^n$ for all $n\geq 0$. \\
   {\rm (2)} Since  $\lm(\m^n/\m^{n+1})=\dim_kk[\Delta]_n,$  
   The Hilbert Series of the face ring is  
   $$H(k[\Delta],t)=\sum_{n=0}^\infty \dim_kk[\Delta]_n
t^n=\frac{h_0+h_1t+\cdots+h_st^s}{(1-t)^d}.$$ Put $h(t)=h_0+h_1t+\cdots +
h_st^s$. The face vector $(f_{-1}, f_1, f_0, \ldots,f_{d-1})$ and the $h$-vector
are related by the equation
   $$\sum_{i=0}^sh_it^i=\sum_{i=0}^df_{i-1}t^i(1-t)^{(d-i)}$$
   by \cite[Lemma 5.1.8]{bh}.
   Then by \cite[Proposition 4.1.9]{bh} we have 
   $$e_1(\m)=\ov e_1(\m)=h'(1)=df_{d-1}-f_{d-2}.$$ 
   {\rm (3)} Now we prove that if $\Delta$ is a pure simplicial complex then
$df_{d-1}\geq f_{d-2}$. Let $\sigma$ be a facet. For any
$v_i\in\sigma=\{v_1,\ldots , v_d\}$, $\sigma\setminus \{v_i\}$ is a
$(d-2)$-dimensional face and $\sigma\setminus \{v_i\}$ are distinct for all
$i=1,\ldots ,d$. Therefore each facet gives rise to $d$ faces of dimension 
$d-2.$  But different facets may produce same faces of dimension $d-2$. Since
$\Delta$ is pure,  each $(d-2)$-dimensional face is contained in a facet. Hence
$df_{d-1}\geq f_{d-2}$. Therefore
   $\ov{e}_1(\m)\geq 0$ by Theorem \ref{chernface}.
 \end{proof}
 The above theorem indicates that the maximal homogeneous ideal of the  face
ring of a non-pure simplicial complex may have negative Chern number. Indeed,
consider the simplicial complex 
   $\Delta_n$ with its vertex set as  $\{v_1,v_2, \ldots, v_{n+2}\}$ where $n
\geq 2$  and $$\Delta_n=\{\{v_1,v_2\},v_3, \ldots,v_{n+2} \}.$$
   Then $e_1(\m)=df_{d-1}-f_{d-2}=-n.$
   Hence we need to add the assumption of unmixedness on the ring in
Vasconcelos' Positivity Conjecture.

By a  {\bf finite cover} $S/R,$ we mean a ring extension $R\subseteq S$  such
that $S$ is a finite $R$-module. Then $S$ is a Noetherian semilocal ring.  We
say that the finite cover $S/R$ is {\bf birational} if $R$ is reduced and $S$ is
contained in the total ring of fractions of $R;$ that $S/R$ is of finite length
if $\lm (S/R)$ is finite;  and that $S/R$ is  Cohen-Macaulay if $S$ is
Cohen-Macaulay as an $R$-module.   
 
   \begin{theorem}[Mandal-Singh-Verma, \cite{msv}]  \label{cor-pc}  Let $(R,\m)$
be an analytically unramified Noetherian local ring of positive dimension.  Then
$\ov e_1(I)\geq 0$ for all $\m$-primary ideals $I$ of $R$ if $R$ satisfies any
one of the following conditions: 
   \begin{enumerate}
   \item[{\rm  (1)}] $R$ has a finite Cohen-Macaulay cover which is of finite
length or is birational,
   \item[{\rm (2)}] $\dim R=1$, 
   
  \item[{\rm (3)}] The integral closure of $R$ is Cohen-Macaulay as  an
$R$-module,
   
   \item[{\rm (4)}] $\dim R=2$ and all maximal ideals of the integral closure of
$R$ have the same height,
   
   \item[{\rm (5)}] $R$ is a complete local integral domain of dimension 2.     
    
   \end{enumerate}
   \end{theorem}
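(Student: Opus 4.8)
My plan is to reduce conditions (2)--(5) to condition (1) and then to prove (1) by base change along a well-behaved finite extension of $R$; write $d=\dim R$. In each of (2)--(5) I would take $S=\ov R$, the integral closure of $R$ in its total ring of fractions: it is module-finite over $R$ because $R$ is analytically unramified, it is reduced, and it is birational by construction, so all that must be checked is that $\ov R$ is Cohen--Macaulay as an $R$-module. For (2), $\ov R$ is a reduced normal semilocal ring of dimension $1$, hence regular, and moreover $\ov R/R$ then has finite length. Case (3) is exactly this hypothesis. For (4), $\ov R$ is normal of dimension $2$, hence satisfies Serre's condition $(S_2)$, hence is Cohen--Macaulay as a ring, and the equal-height hypothesis on its maximal ideals forces $\depth_R\ov R=2=\dim R$. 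Case (5) is the special case of (4) in which $R$ complete and a domain makes $\ov R$ a complete \emph{local} normal domain of dimension $2$. So it suffices to prove (1).

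For (1), passing first to $R[x]_{\m R[x]}$ and then to $\hat R$ preserves analytic unramifiedness, the value of $\ov e_1(I)$, and the existence of a finite Cohen--Macaulay cover of the prescribed type, using $\ov{I^n}\hat R=\ov{(I\hat R)^n}$ (valid because $R$ is analytically unramified); so I may assume $R$ is complete with infinite residue field and $S$ is reduced. Since integral closure contracts along integral extensions, $\ov{(IS)^n}\cap R=\ov{I^n}$, and hence for each $n$ there is an exact sequence of $R$-modules
$$0\xar R/\ov{I^n}\xar S/\ov{(IS)^n}\xar M/\ov{(IS)^n}M\xar 0,\qquad M:=S/R,$$
in which $M$ is a finite $S$-module, $\{\ov{(IS)^n}M\}$ is an $IS$-admissible filtration of $M$, and $S/\ov{(IS)^n}$ has finite length. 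Taking $R$-lengths, $\lm_R(R/\ov{I^n})=\lm_R(S/\ov{(IS)^n})-\lm_R(M/\ov{(IS)^n}M)$, and the subtracted term agrees for $n\gg 0$ with a polynomial of degree $\dim_R M$ with non-negative leading coefficient. In the finite-length case $\dim_R M=0$; in the birational case $M$ vanishes at all minimal primes of $R$, so $\dim_R M\le d-1$. Either way this correction has degree $\le d-1$, so the two Hilbert polynomials have the same degree-$d$ coefficient, and comparing degree-$(d-1)$ coefficients yields $\ov e_1(I)\ge \ov e_1^{\,S}(IS)$, where $\ov e_1^{\,S}(IS)$ is the first coefficient of the Hilbert polynomial of the filtration $\{\ov{(IS)^n}\}$ of the $R$-module $S$.

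It remains to see $\ov e_1^{\,S}(IS)\ge 0$. Since $S/\ov{(IS)^n}$ has finite length and integral closure commutes with localization, $\lm_R(S/\ov{(IS)^n})=\sum_{\n}[S/\n:R/\m]\,\lm_{S_{\n}}(S_{\n}/\ov{(IS_{\n})^n})$, the sum over the maximal ideals $\n$ of $S$. Because $R\subseteq S$ and $S$ is Cohen--Macaulay as an $R$-module, $\depth_R S=\dim R=d$, which forces every $S_{\n}$ to be a $d$-dimensional Cohen--Macaulay local ring; moreover each $S_{\n}$ is analytically unramified, being a localization of the reduced ring $S$, which is finite over the analytically unramified ring $R$. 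Hence $\ov e_1(IS_{\n})\ge 0$ for all $\n$ by Theorem \ref{M}, so $\ov e_1^{\,S}(IS)\ge 0$, and combined with the previous step $\ov e_1(I)\ge 0$.

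I expect the crux to be structural rather than computational: verifying in cases (4)--(5) that the normalization is Cohen--Macaulay \emph{as an $R$-module} (Serre's criterion in dimension $2$ together with the height condition on maximal ideals), and, within the proof of (1), that a maximal Cohen--Macaulay $R$-module structure on $S$ forces \emph{every} localization $S_{\n}$ to be a $d$-dimensional Cohen--Macaulay analytically unramified local ring --- this equidimensionality is precisely what makes the length decomposition a sum of genuine normal Chern numbers to which Marley's Theorem \ref{M} applies. The assorted inheritances of analytic unramifiedness (finiteness of $\ov R$ over $R$, the identity $\ov{I^n}\hat R=\ov{(I\hat R)^n}$, and analytic unramifiedness of $S$ and of each $S_{\n}$) are routine but need to be invoked with care.
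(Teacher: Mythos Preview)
The survey paper does not prove this theorem; it records the statement with a citation to \cite{msv} and immediately moves on to the Goto--Hong--Mandal result (Theorem~\ref{pos}). So there is no in-paper argument to compare against directly.

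Your proposal is sound and in fact parallels the technique the paper does display for Theorem~\ref{pos}: pass to a finite extension $S$, use contraction of integral closure to obtain an injection $R/\ov{I^n}\hookrightarrow S/\ov{(IS)^n}$, and compare Hilbert polynomials. Your extra ingredient is the exact sequence with cokernel $M/\ov{(IS)^n}M$, which sharpens the inequality to $\ov e_1(I)\ge \ov e_1^{\,S}(IS)$ via $\dim_R M\le d-1$ in the birational case (and $\dim_R M=0$ in the finite-length case). The equidimensionality step --- that $\depth_R S=d$ forces every $S_\n$ to be a $d$-dimensional Cohen--Macaulay local ring --- is cleanest after completion, since then $S\cong\prod_\n S_\n$ and $\depth_R S=\min_\n\depth S_\n$; you have already arranged for $R$ to be complete, so this is fine, and analytic unramifiedness of each $S_\n$ follows because a birational finite extension of the reduced complete ring $\hat R$ is itself reduced and complete. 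The reduction of (4) to (1) is the place to be most explicit: you should spell out why a length-$d$ regular sequence in $\m$ remains regular on $\ov R$, e.g.\ by checking associated primes locally at each maximal ideal of $\ov R$, which is exactly where the equal-height hypothesis is used. With that made precise, the argument goes through.
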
 
   
   In \cite{ghm}  the solution to the Positivity Conjecture has been given for
analytically unramified unmixed local rings. In order to prove the Theorem we
need a lemma. First we set up the notation for the lemma. Suppose $z_1, z_2,
\ldots, z_d$ are indeterminates. Let $R'=R[z_1,\ldots , z_d]_\mathfrak q$ where
$\mathfrak q={\m R[z_1,\ldots , z_d]}$ and take quotient by a general element
$x=a_1z_1+\cdots +a_dz_d$, where $I=(a_1,\ldots ,a_d)$ and consider the ring
$D'=R'/(x)$. For a function $f : \mathbb Z \rightarrow \mathbb Z$ we define
$\Delta f(n)=f(n)-f(n-1).$ The next lemma is crucial for applying induction
on the dimension of the ring. It is proved  in \cite {i2} and \cite{ghm}.

   \begin{lemma}\label{hu}
   Let $(R,\m)$ be a complete normal local domain. Then with the notation above 
   \begin{enumerate}
   \item[{\rm (1)}] $\ov{I^n}D'=\ov{I^n D'}$ for large $n$.
   \item[{\rm (2)}] $\ov{P}_{ID'}(n)=\Delta \ov P_{IR'}(n).$
   \end{enumerate}
   \end{lemma}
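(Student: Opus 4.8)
The plan is to reduce both assertions to statements about the behaviour of the integral closure filtration under passage to a generic hyperplane section, using that a general element $x = a_1z_1 + \cdots + a_dz_d$ of $IR'$ is a superficial element for the filtration $\{\ov{I^n}\}$ and, in fact, a nonzerodivisor lying in $\ov{I}R' \setminus \m R'$ with good generic properties. First I would record that since $R$ is a complete normal local domain, $R' = R[z_1,\ldots,z_d]_{\m R[z_1,\ldots,z_d]}$ is again a normal local domain (normality is preserved under polynomial extension and localization), and $x$ is a nonzerodivisor, so $D' = R'/(x)$ has dimension $d-1$ and the filtration $\{\ov{I^n}D'\}$ makes sense. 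For part (1), the containment $\ov{I^n}D' \subseteq \ov{I^nD'}$ is automatic from the definition of integral closure under the quotient map; the content is the reverse containment for large $n$. I would obtain it from the fact that $x$ is superficial for $\{\ov{I^n}\}$: for $n \gg 0$ one has $\ov{I^{n+1}} \cap (x) = x\,\ov{I^n}$ (this is the superficiality statement, valid because $R$ is analytically unramified so $\{\ov{I^n}\}$ is an $I$-admissible filtration and superficial elements exist once $R/\m$ is infinite — which it is after adjoining the $z_i$). From $x\,\ov{I^n} = \ov{I^{n+1}} \cap (x)$ for $n \gg 0$ one deduces by a standard argument (lifting an integral equation modulo $x$ and clearing, exactly as in the Huneke--Itoh/Itoh computations in \cite{i2}) that $\ov{I^nD'} \subseteq \ov{I^n}D'$ for all large $n$.

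For part (2), the idea is that the Hilbert polynomial of the quotient filtration is the first difference of the Hilbert polynomial of the original filtration, which is the classical superficial-element reduction applied to the normal filtration. Concretely, from the exact sequence
$$
0 \longrightarrow \frac{\ov{I^{n+1}} \cap (x)}{x\,\ov{I^n}} \longrightarrow \frac{R'}{\ov{I^n}} \xrightarrow{\ x\ } \frac{R'}{\ov{I^{n+1}}} \longrightarrow \frac{D'}{\ov{I^{n+1}}D'} \longrightarrow 0
$$
one reads off, for $n \gg 0$ where the leftmost term vanishes by superficiality and where $\ov{I^{n+1}}D' = \ov{I^{n+1}D'}$ by part (1),
$$
\lm\!\left(D'/\ov{I^{n+1}D'}\right) = \lm\!\left(R'/\ov{I^{n+1}}\right) - \lm\!\left(R'/\ov{I^n}\right) = \ov P_{IR'}(n+1) - \ov P_{IR'}(n).
$$
Since both sides are polynomials in $n$ for large $n$, the normal Hilbert polynomial $\ov P_{ID'}$ evaluated at $n+1$ equals $\ov P_{IR'}(n+1) - \ov P_{IR'}(n) = \Delta \ov P_{IR'}(n+1)$, which after re-indexing is the claimed identity $\ov P_{ID'}(n) = \Delta \ov P_{IR'}(n)$. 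One should also check that passing from $R$ to $R'$ does not change the normal Hilbert polynomial (it is a flat local extension with the same residue-field-independence of lengths up to the polynomial extension, and $\ov{I^n}R' = \ov{I^nR'}$), so that the statement as phrased in terms of $\ov P_{IR'}$ is the correct one.

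The main obstacle is the verification that $x$ is genuinely superficial for $\{\ov{I^n}\}$ and — more delicately — that part (1) holds, i.e. that the generic hyperplane section commutes with integral closure for large powers. This is where normality of $R$ (hence of $R'$) is used in an essential way: one needs that $\ov{I^n}$ and $\ov{I^nD'}$ are computed by the same Rees valuations up to the hyperplane section, and the cleanest route is to invoke the superficiality $\ov{I^{n+1}} \cap (x) = x\,\ov{I^n}$ for $n \gg 0$ together with the observation that $D'$ is reduced (so $\{\ov{I^nD'}\}$ is itself an $I$-admissible filtration by Rees's theorem, \cite{rees1}) — reducedness of $D'$ follows because $R'$ is normal and $x$ is a general element, so $R'/(x)$ is again normal, in particular reduced. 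With these facts in place the rest is the routine length bookkeeping sketched above. I would cite \cite{i2} and \cite{ghm} for the detailed verification of the superficiality and the interchange in part (1), as the excerpt already attributes the lemma to those sources.
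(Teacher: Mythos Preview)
The paper does not prove this lemma: immediately before the statement it says ``It is proved in \cite{i2} and \cite{ghm}'' and moves on. So there is nothing in the paper to compare against beyond the citations, and your closing line --- deferring the detailed verification of part~(1) to those same references --- already matches what the paper does.

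Your outline for part~(2) is correct and standard: once $x$ is superficial for the normal filtration and part~(1) holds, the exact sequence you wrote gives $\ov H_{ID'}(n)=\Delta\ov H_{IR'}(n)$ for $n\gg 0$, hence the polynomial identity. For part~(1), though, the mechanism you propose does not work. Superficiality gives $\ov{I^{n+1}}R'\cap(x)=x\,\ov{I^n}R'$ for large $n$, which controls how $(x)$ meets the filtration in $R'$; it says nothing about pulling an integral dependence relation over $I^nD'$ back to one over $I^nR'$. There is no ``lifting an integral equation modulo $x$ and clearing'' argument that produces $\ov{I^nD'}\subseteq\ov{I^n}D'$ from this. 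The proofs in \cite{i2} (compare Theorem~\ref{s1} here) and \cite{ghm} work instead at the level of the extended Rees algebra and its normalization, using normality of $R'$ to control $\ov{\mathcal R}(IR')$ and then comparing its specialization with $\ov{\mathcal R}(ID')$ in high degrees; see also \cite{hu}, which both the paper and \cite{ghm} point to for this specialization step. A smaller point: your claim that $D'=R'/(x)$ is again normal is Itoh's Theorem~\ref{s1}(4) and needs $\dim R\geq 3$; what is actually required for the lemma (and what holds in the relevant range) is only that $D'$ is analytically unramified.
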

   \begin{theorem}[Goto-Hong-Mandal, \cite{ghm}]\label{pos}
   Let $(R,\m)$ be an analytically unramified unmixed local ring of dimension $d
\geq 2.$ Then for every $\m$-primary ideal $I,$
   $$\ov e_1(I)\geq 0.$$
   \end{theorem}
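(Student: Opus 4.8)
The plan is to reduce, via standard devices, to the case of a complete normal local domain and then run an induction on $d$ using Lemma~\ref{hu} to pass from dimension $d$ to dimension $d-1$. First I would recall that $\ov e_1(I)$ depends only on the integral closure filtration of $I$ and that passing to $R/\sqrt{(0)}$ does not change $\ov{I^n}$ for $n$ large (the nilradical is killed by some power of itself), so we may assume $R$ is reduced; since $R$ is analytically unramified and unmixed, completing preserves these hypotheses, so we may assume $R$ is complete, reduced, unmixed. Next I would like to replace $R$ by its integral closure $\ov R$ in its total ring of fractions: because $R$ is complete reduced, $\ov R$ is a finite product of complete normal local domains, and $\ov R/R$ has finite length by analytic unramifiedness, so $\ov{I^n}$ and $\ov{(I\ov R)^n}$ differ by a bounded length for all $n$; hence the degree $d-1$ coefficient $\ov e_1$ is the same computed over $R$ or over the semilocal ring $\ov R$, and the latter splits as a product over the (finitely many) maximal ideals. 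Here a mild care is needed because unmixedness of $R$ should force each local component of $\ov R$ to again have dimension $d$; this is where the hypothesis is used to keep the induction honest, and I would invoke it (as the authors presumably will) rather than reprove it. So we are reduced to: $(R,\m)$ a complete normal local domain of dimension $d\ge 2$, and we must show $\ov e_1(I)\ge 0$.

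Now I would induct on $d$. For the base case $d=2$: this is already covered by Theorem~\ref{cor-pc}(5) (a complete local integral domain of dimension $2$), or one may quote Itoh's classical result $\ov e_1(I)\ge \ov e_0(I)-\lm(R/\ov I)+\lm(R/R)$-type inequalities; in any case dimension two is known, so I would simply cite it. For the inductive step, assume $d\ge 3$ and the result in dimension $d-1$. Set up the notation of Lemma~\ref{hu}: adjoin indeterminates $z_1,\dots,z_d$, localize at $\m R[z_1,\dots,z_d]$ to get $R'$ (which is a normal local domain with the same $\ov e_1(IR')=\ov e_1(I)$, since such a faithfully flat extension with zero-dimensional closed fibre preserves lengths $\lm(R'/\ov{I^nR'})=\lm(R/\ov{I^n})$), and form $D'=R'/(x)$ for a general element $x=a_1z_1+\cdots+a_dz_d$. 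By Lemma~\ref{hu}(2), $\ov P_{ID'}(n)=\Delta\ov P_{IR'}(n)$; comparing the $\binom{x+d-2}{d-1}$ and $\binom{x+d-3}{d-2}$ coefficients of both sides, the degree $d-1$ leading term of $\ov P_{IR'}$ differences down to the leading term of $\ov P_{ID'}$, i.e. $\ov e_0(ID')=\ov e_0(IR')$, and — the point — $\ov e_1(ID')=\ov e_1(IR')=\ov e_1(I)$.

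Thus it suffices to show $\ov e_1(ID')\ge 0$ for the ring $D'$, which has dimension $d-1$. The obstacle is that $D'$ need not be a domain, need not be normal, and a priori might not be unmixed or even reduced — so I cannot immediately apply the inductive hypothesis. This is exactly the crux and where I expect the real work to be. The resolution (and what I would carry out carefully) is that for a \emph{general} element $x$ cutting a normal domain $R'$ of dimension $\ge 3$, the quotient $D'$ inherits enough: by a Bertini-type / general-hyperplane argument $D'$ is an unmixed (equidimensional, $(S_1)$) reduced local ring, hence analytically unramified unmixed of dimension $d-1$, so the inductive hypothesis applies and gives $\ov e_1(ID')\ge 0$. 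The technical heart is therefore the transfer of ``normal complete domain'' through one general hyperplane section to ``analytically unramified unmixed ring,'' together with the length-comparison in Lemma~\ref{hu}(1) that guarantees $\ov{I^nD'}$ and $\ov{I^n}D'$ agree for large $n$ so that the Hilbert polynomials genuinely match. Granting these (all available in \cite{i2},\cite{ghm}), the chain $\ov e_1(I)=\ov e_1(IR')=\ov e_1(ID')\ge 0$ closes the induction and proves the theorem.
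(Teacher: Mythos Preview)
Your inductive step via Lemma~\ref{hu} is exactly what the paper does once one is standing in a complete normal local domain, so that part is fine. The gap is in your reduction to that situation. You assert that ``$\ov R/R$ has finite length by analytic unramifiedness,'' and from this deduce that $\ov e_1$ computed over $R$ and over $\ov R$ coincide. This is false in general: take $R=k[[t^2,t^3,u]]$, a complete unmixed (indeed, a domain) analytically unramified local ring of dimension $2$; then $\ov R=k[[t,u]]$ and $\ov R/R$ has dimension $1$, not finite length. Analytic unramifiedness (equivalently, $\hat R$ reduced) gives that $\ov R$ is module-finite over $R$, but says nothing about the length of $\ov R/R$. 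Consequently the difference $\lm_R(\ov R/\ov{I^n\ov R})-\lm_R(R/\ov{I^n})=\lm_R\bigl(\ov R/(R+\ov{I^n\ov R})\bigr)$ can grow like $n^{d-1}$, and the two $\ov e_1$'s need not agree.

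The paper salvages exactly this point by replacing your equality with an inequality. From $\ov{I^n}=\ov{I^nS}\cap R$ one gets an injection $R/\ov{I^n}\hookrightarrow S/\ov{I^nS}$ with $S=\ov R=\prod_{\p\in\Ass R}S(\p)$, hence $\lm_R(R/\ov{I^n})\le\sum_\p\lm_R(S(\p)/\m_{S(\p)})\,\lm_{S(\p)}(S(\p)/\ov{I^nS(\p)})$. Since the $\ov e_0$-coefficients match by the associativity formula, comparing the degree $d-1$ terms yields
\[
\ov e_1(I,R)\ \ge\ \sum_{\p\in\Ass R}\lm_R(S(\p)/\m_{S(\p)})\,\ov e_1(IS(\p),S(\p)),
\]
and it is this inequality (not an equality) that reduces the problem to the complete normal local domains $S(\p)$, each of dimension $d$ by unmixedness. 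From there the paper proceeds, as you do, via the general element of Lemma~\ref{hu}. So your outline becomes correct once you weaken ``$\ov e_1$ is the same over $R$ and $\ov R$'' to ``$\ov e_1(I,R)$ is bounded below by a nonnegative combination of the $\ov e_1(IS(\p),S(\p))$.''
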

   
   \begin{proof}
   We  sketch the proof. Without loss of generality we may assume $R$ is
complete and $I$ is a parameter ideal. We prove the theorem by induction on $d.$
 Let $S = \ov{R}$. For each $\p \in \Ass R$ we put  $S(\p)=\ov{R/\p}$. Then
$S(\p)$ is a module-finite extension of $R/\p$ and we get by  \cite[Corollary
2.1.13]{hs}
   $$S=\prod_{\p \in \Ass R}S(\p) \ \ ~~~\text{and}~~~\ \  \ov{I^{n}} =
\ov{I^{n}S}\cap R$$
   for all $n \ge 1$. Consider the map $\chi :R/\ov{I^n}\longrightarrow
S/\ov{I^nS}$. Then $\ker \chi =\ov{I^nS}\cap R=\ov{I^n}$. So $\chi$ is
injective.
   Hence
   \begin{eqnarray*}
    \lm_R(R/\ov{I^{n}}) \leq \lm_{R}(S/\ov{I^{n}S}) &=&  \sum_{\p \in \Ass R}
\lm_{R}(S(\p)/\ov{I^{n}S(\p)})\\
   &=& \sum_{\p \in \Ass
R}\lambda_{R}(S(\p)/\m_{S(\p)})\lambda_{S(\p)}(S(\p)/\ov{I^{n}S(\p)}),
   \end{eqnarray*}
   where $\m_{S(\p)}$ denotes the maximal ideal of $S(\p)$. 
   As $\dim S(\p) = d$ for each $\p \in \Ass R$, we have 
   \begin{eqnarray*}
   \overline e_0(I,R) = e_0(I,R) = e_0(I,S) &=&\sum_{\p \in \Ass
R}e_0(I,S(\p))\\
   & = & \sum_{\p \in \Ass R}\lambda_R(S(\p)/\m_{S(\p)})e_0({IS(\p)},S(\p))\\
   &=& \sum_{\p \in \Ass R}\lambda_R(S(\p)/\m_{S(\p)})\overline
e_0({IS(\p)},S(\p)).
   \end{eqnarray*}
   Therefore
   \begin{eqnarray*}
   0 &\le&\lambda_R(S/\ov{I^{n+1}S})- \lambda_R(R/\ov{I^{n+1}})
\\&=&\left[\overline{e}_1(I,R) - \sum_{\p \in 
\Ass R}\lambda_R(S(\p)/\m_{S(\p)})\overline{e}_1({IS(\p)},S(\p))
\right]\binom{n+d-1}{d-1}\\
 & + & \mbox{ terms of lower degree in}\;  n. 
   \end{eqnarray*}
   Hence
   \[ \overline{e}_1({I},R) \geq  \sum_{\p \in \Ass R}
\lambda_R(S(\p)/\m_{S(\p)})\ov{e}_1({IS(\p)},S(\p)). \] 
   In order to prove $\ov{e}_1(I,R) \ge 0$, it suffices to show that
$\ov{e}_1({IS(\p)},S(\p)) \geq 0$ for each $\p \in \Ass R$. If $d=2$, as $S(\p)$
is a Cohen-Macaulay local ring, $\ov{e}_1(I,R) \ge 0$. 
    Suppose that $d \geq 3$ and that our assertion holds true for $d-1$. 
Passing to the ring $S(\p)$, we may assume that $R$ is a complete  local normal
domain. Then we consider the general extension ring $T=R[z_1, z_2, \ldots, z_d]$
and $R'=R[z_1,\ldots , z_d]_\mathfrak q$ where $\mathfrak q={\m R[z_1,\ldots,
z_d]}$ and take quotient by a general element $x=a_1z_1+\cdots +a_dz_d$, where
$I=(a_1,\ldots ,a_d)$ and consider the ring $D'=R'/(x)$.  Then $D'$ is unmixed
and analytically unramified. 
By using Lemma \ref{hu} we have
$\ov{I^n}D'=\ov{I^n D'}$ for large $n$ and $$\ov{P}_{ID'}(n)=\Delta \ov
P_{IR'}(n).$$
   Then using induction on $d$ we are done.
  \end{proof}

\section{Study of $\ov e_2(I)$}

\noindent In this section we survey results on $\ov{e}_2(I)$. As in case of
$\ov{e}_1(I),$ it turns out that $\ov{e}_2(I)$ is also 
non-negative in analytically unramified Cohen-Macaulay local rings. Itoh has
given a lower bound for $\ov{e}_2(I)$ and has given a necessary 
and sufficient condition for the bound to be attained. We  give a proof of this
theorem. As a consequence we prove Huneke's 
theorem which gives a necessary and sufficient condition for vanishing of
$\ov{e}_2(I)$ in terms of reduction number $\ov r(I).$ We 
give a necessary and sufficient condition for $\ov{e}_2(I)=1$. We also discuss a
necessary and sufficient condition for $\ov r(I) \leq 2$ 
in terms of $\ov{e}_2(I).$ As a consequence we derive a theorem of Corso, Polini
and Rossi {\rm \cite[Theorem 3.12]{cpr}} which 
gives a condition for a normal ideal to have reduction number two. \\
\noindent T. Marley in his thesis {\rm \cite[Proposition 3.23]{m}} proved that
 $e_2(\mathcal{I})\geq 0$ for any $I$-admissible filtration $\mathcal{I}$ in a
Cohen-Macaulay local ring. 
\begin{theorem}{\rm \cite[Proposition 3.23]{m}}
 Let $(R,\m)$ be a $d$-dimensional Cohen-Macaulay local ring and $I$ be an
$\m$-primary ideal. Let $\mathcal I=\{I_n\}$ be an 
$I$-admissible filtration. Then $e_2(\mathcal I)\geq 0$.
\end{theorem}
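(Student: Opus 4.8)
The plan is to reduce to dimension one — wait, that cannot be right, since in dimension one $e_2$ is identically zero and the statement is vacuous there. So the real reduction must be to dimension two: by passing to $R/\m$ infinite and cutting down by $d-2$ superficial elements of $\mathcal I$ (using \cite[section 8.5]{hs}, exactly as in the proof of Theorem \ref{M}), one reduces to the case $\dim R = 2$, with the caveat that superficial elements for an $I$-admissible filtration behave well and preserve $e_2$. So the first step I would carry out is: reduce to $d=2$, $R$ Cohen-Macaulay, $\mathcal I$ an $I$-admissible filtration.

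In dimension two the cleanest route is via the Huckaba--Marley machinery combined with a Hilbert-series computation. Write $J=(x,y)$ for a minimal reduction of $\mathcal I$ and consider $B = G(\mathcal I)/(x^*,y^*)G(\mathcal I)$; since $x^*,y^*$ are a system of parameters for the two-dimensional ring $G(\mathcal I)$, one controls $e_2(\mathcal I)$ through the function $n\mapsto \lm(I_n/JI_{n-1})$. The key identity I would establish is
$$
e_2(\mathcal I) = \sum_{n\geq 1}(n-1)\,\lm(I_n/JI_{n-1}),
$$
valid when $\depth G(\mathcal I)\geq d-1$, which in dimension two is automatic after a generic reduction since $x^*$ is then a nonzerodivisor; here every summand is a length (hence $\geq 0$) and the coefficient $n-1\geq 0$ for $n\geq 1$, giving $e_2(\mathcal I)\geq 0$ immediately. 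To handle the general two-dimensional case where $G(\mathcal I)$ need not have depth $\geq 1$, one instead uses the Huckaba--Marley bound $e_1(\mathcal I)\leq \sum_{n\geq 1}\lm(I_n/JI_{n-1})$ from Theorem \ref{HM}(1) together with the exact relation between the Hilbert series $F_{\mathcal I}(t)$, the reduction $J$, and the polynomial $P_{\mathcal I}$; comparing the numerator of $F_{\mathcal I}(t)/(1-t)$ against that of the Koszul complex on $x,y$ expresses $e_2(\mathcal I)$ as a nonnegative combination of the lengths $\lm(I_n/JI_{n-1})$.

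Alternatively — and this may be the slicker packaging — I would deduce the result directly from Theorem \ref{HM} and Theorem \ref{vv}: after reducing to $\dim R=2$, pick $J$ a minimal reduction, and use that $e_2(\mathcal I)$ equals the ``second iterated sum'' of the numerator coefficients of $F_{\mathcal I}(t)$, each of which is $\lm(I_{n-1}/I_n)\geq 0$; a double summation of nonnegative terms is nonnegative. Concretely, writing $F_{\mathcal I}(t) = h(t)/(1-t)^d$ with $h(t)=\sum h_i t^i$, one has $e_j(\mathcal I) = \sum_{i}\binom{i}{j}h_i$ up to sign conventions, but over a Cohen-Macaulay ring the partial sums of the $h_i$ are themselves nonnegative (this is the substance of Marley's argument and of the Valabrega--Valla picture), so $e_2(\mathcal I) = \sum_i \binom{i}{2} h_i$ splits as a nonnegative sum.

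The main obstacle I anticipate is the superficial-element reduction in the filtered setting: one must check that a superficial element $z$ for $\mathcal I$ yields a filtration $\{(I_n+zR)/zR\}$ on $R/zR$ that is again $I$-admissible and has $e_i$ equal to $e_i(\mathcal I)$ for $i<d$ — the standard references handle the $I$-adic case, and the filtered version requires the mild hypothesis that $R/\m$ be infinite (so superficial elements exist) plus the observation that $z$ being a nonzerodivisor in the Cohen-Macaulay ring $R$ makes the length bookkeeping exact. Once that reduction is in place, the dimension-two computation is essentially forced, and the nonnegativity falls out of the Huckaba--Marley inequality (Theorem \ref{HM}(1)) by a direct manipulation of the Hilbert series with respect to the minimal reduction $J$.
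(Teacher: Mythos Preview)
Your reduction to dimension two via superficial elements is the standard opening and is fine; the gap is in the dimension-two endgame. The formula
\[
e_2(\mathcal I)=\sum_{n\ge 1}(n-1)\,\lm(I_n/JI_{n-1})
\]
does require $\depth G(\mathcal I)\ge d-1$, and your claim that this is ``automatic after a generic reduction since $x^*$ is then a nonzerodivisor'' is incorrect: a superficial element $x$ only guarantees that $x^*$ is a nonzerodivisor on $G(\mathcal I)$ in large degrees, not globally. The paper's own Example~\ref{neg} exhibits a two-dimensional Cohen--Macaulay ring with $\depth G(\m)=0$ and $h$-vector $(1,3,0,3,-1)$; this also kills your alternative route, since the $h_i$ (and their partial sums) are \emph{not} nonnegative in general, so ``$e_2=\sum_i\binom{i}{2}h_i$ is a nonnegative sum'' is simply false. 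Your fallback appeal to the Huckaba--Marley inequality of Theorem~\ref{HM}(1) only bounds $e_1$, not $e_2$, and the sentence ``comparing the numerator \ldots\ expresses $e_2$ as a nonnegative combination of the lengths $\lm(I_n/JI_{n-1})$'' is exactly the step that fails without the depth hypothesis.

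The standard repair you are missing is the Ratliff--Rush closure: replace $\mathcal I$ by the filtration $\widetilde{\mathcal I}=\{\widetilde{I_n}\}$, where $\widetilde{I_n}=\bigcup_{k\ge 1}(I_{n+k}:I_1^k)$. One checks that $\widetilde{\mathcal I}$ is again $I$-admissible with the same Hilbert polynomial (hence the same $e_2$), and that $G(\widetilde{\mathcal I})$ has positive depth in the two-dimensional Cohen--Macaulay case; then your displayed formula applies to $\widetilde{\mathcal I}$ and gives $e_2(\mathcal I)=e_2(\widetilde{\mathcal I})\ge 0$. This is essentially Marley's argument in \cite{m}.

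For comparison, the paper does not prove the theorem in this generality at all: it states Marley's result and then proves only the special case $\ov e_2(I)\ge 0$ for the integral closure filtration, by an entirely different method. In dimension two it invokes Blancafort's formula (Theorem~\ref{blanc}) together with Itoh's vanishing (Theorem~\ref{s2}) to get $\ov e_2(I)=\lm([H^2_N(\ov{\mathcal R}(I))]_0)\ge 0$ directly, and for $d>2$ it reduces not by a superficial element but via Itoh's general-element construction (Theorem~\ref{s1}), which is tailored to the normal filtration. So even once you patch your argument with Ratliff--Rush, your route and the paper's are genuinely different: yours is elementary and works for arbitrary admissible filtrations, while the paper's is cohomological and specific to $\{\ov{I^n}\}$.
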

\noindent
We give a proof of positivity of $\ov{e}_2(I)$. In order to prove this we will
use the following results.
\begin{theorem}{\rm \cite[Theorem 4.1]{b}}\label{blanc}
 Let $(R,\m)$ be a $d$-dimensional local ring and $I$ be an $\m$-primary ideal.
Let $\mathcal I=\{I_n\}$ be an $I$-admissible 
filtration.  Let $\mathcal{R}_+=\bigoplus_{n>0}I^nt^n$.
Then\\
{\rm (1)} For all $i\geq 0$, $\lm([H^i_{\mathcal R_+}(\mathcal
R(\mathcal{I}))]_n)<\infty $ for all $n\in \mathbb Z$. \\
{\rm (2)} For all $n\in \mathbb Z$,
$$P_{\mathcal I}(n)-H_{\mathcal I}(n)=\sum_{i=0}^d(-1)^i\lm([H^i_{\mathcal
R_+}(\mathcal R(\mathcal{I}))]_n).$$

\end{theorem}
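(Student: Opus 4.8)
The plan is to compare the Hilbert function $H_{\mathcal I}(n) = \lm(R/I_n)$ with the Hilbert polynomial $P_{\mathcal I}(n)$ by relating both to the local cohomology of the Rees algebra $\mathcal R(\mathcal I) = \bigoplus_{n\geq 0} I_n t^n$ with support in $\mathcal R_+ = \bigoplus_{n>0} \mathcal R(\mathcal I)_n$. The starting point is the Serre-type formula for Euler characteristics of local cohomology of a graded module over a standard-graded ring: if $M$ is a finitely generated graded $\mathcal R(\mathcal I)$-module, then for each $n$ the alternating sum $\sum_{i\geq 0}(-1)^i \lm([H^i_{\mathcal R_+}(M)]_n)$ differs from the Hilbert function of $M$ in degree $n$ by a term that agrees with a polynomial in $n$, and that polynomial is precisely the Hilbert polynomial of $M$. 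One can see this either via the two spectral sequences computing $H^\bullet_{\mathcal R_+}$ from a \v{C}ech complex built from a homogeneous system of parameters of degree one, or via Grothendieck's vanishing and the exact sequence $0 \to H^0_{\mathcal R_+}(M) \to M \to \bigoplus_n [\text{(sheaf cohomology)}] \to H^1_{\mathcal R_+}(M) \to 0$. I would carry out the \v{C}ech approach: after enlarging the residue field one obtains degree-one elements generating an $\mathcal R_+$-primary ideal of $\mathcal R(\mathcal I)$, and the \v{C}ech complex on these elements computes the $H^i_{\mathcal R_+}$ while its Euler characteristic in each graded degree reads off the Hilbert polynomial minus the Hilbert function.

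The first step is to establish part (1): the finiteness $\lm([H^i_{\mathcal R_+}(\mathcal R(\mathcal I))]_n) < \infty$ for all $i$ and all $n$. This follows because $\mathcal R(\mathcal I)$ is a finitely generated algebra over the local ring $(R,\m)$ and $\mathcal R_+$ together with $\m \mathcal R(\mathcal I)$ cuts out the irrelevant maximal graded ideal; the modules $H^i_{\mathcal R_+}(\mathcal R(\mathcal I))$ are graded and each graded piece is a finitely generated $R$-module annihilated by a power of $\m$ (the latter because localizing at any prime not containing $\m$ makes $\mathcal R_+$ the unit ideal in the relevant region, or one argues via the fact that $\mathcal R(\mathcal I)$ is admissible so that $\mathcal R(\mathcal I)_n$ is $\m$-primary-cofinite in $R$), hence of finite length.

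The second step is part (2). Apply the \v{C}ech complex $\check{C}^\bullet$ on a degree-one homogeneous system of parameters $y_1,\dots,y_d$ for $\mathcal R_+$ modulo $\m$. For each fixed $n$ one has a finite complex of finite-length $R$-modules whose cohomology is $[H^i_{\mathcal R_+}(\mathcal R(\mathcal I))]_n$, so
$$\sum_{i=0}^d (-1)^i \lm([H^i_{\mathcal R_+}(\mathcal R(\mathcal I))]_n) = \sum_{j} (-1)^j \lm([\check{C}^j]_n).$$
Each term $[\check{C}^j]_n$ is a finite direct sum of graded localizations of $\mathcal R(\mathcal I)$ shifted in degree; one then rewrites the right-hand side, using that $\lm(\mathcal R(\mathcal I)_m/\text{higher}) = \lm(R/I_m)$ has associated numerical function interpolated by $P_{\mathcal I}$ for large $m$, and a telescoping/inclusion-exclusion identity collapses the alternating sum to $P_{\mathcal I}(n) - H_{\mathcal I}(n)$. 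The main obstacle is precisely this bookkeeping: one must verify that the "polynomial part" coming from the localized pieces of the \v{C}ech complex reassembles exactly into the Hilbert polynomial $P_{\mathcal I}(n)$ (with no boundary corrections), which is where the admissibility of the filtration $\mathcal I$ and Marley's theorem guaranteeing that $H_{\mathcal I}$ is eventually polynomial of degree $d$ are used. Once the two alternating sums are identified degree-by-degree, the formula follows for \emph{every} $n \in \mathbb Z$, not merely for large $n$, because the \v{C}ech computation is valid in all degrees and both sides vanish for $n \gg 0$.
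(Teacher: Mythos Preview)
The paper does not prove this theorem; it only quotes it from Blancafort \cite{b}. So there is no ``paper's own proof'' to compare against, and your task is really to give a self-contained argument.

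Your overall strategy (Grothendieck--Serre via a \v{C}ech complex on degree-one elements generating $\mathcal R_+$ up to radical) is the right framework, but the argument as written has a genuine gap. You assert that ``for each fixed $n$ one has a finite complex of finite-length $R$-modules whose cohomology is $[H^i_{\mathcal R_+}(\mathcal R(\mathcal I))]_n$.'' This is false: the degree-$n$ piece of $\check C^0 = \mathcal R(\mathcal I)$ is $I_n$ itself, which is an ideal of the $d$-dimensional ring $R$ and therefore has infinite length for $d\geq 1$; the higher \v{C}ech terms are even larger (localizations of $\mathcal R(\mathcal I)$). So you cannot simply take the alternating sum of lengths of the \v{C}ech terms and equate it with the alternating sum of the cohomology lengths. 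This is precisely the point where Blancafort's situation differs from the classical Grothendieck--Serre formula over an Artinian base, and it is the heart of the matter.

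There is also a smaller but related issue: in this paper's conventions $\mathcal R(\mathcal I)=\bigoplus_{n\in\mathbb Z}I_nt^n$ is the \emph{extended} Rees algebra, not $\bigoplus_{n\ge 0}I_nt^n$ as you wrote. This matters: for the ordinary Rees algebra the graded pieces $[H^1_{\mathcal R_+}(-)]_n$ in negative degrees are typically \emph{not} of finite length (already for $d=1$, $I=(a)$, one gets $[H^1]_{-1}\cong R$), so part~(1) would fail. With the extended Rees algebra the negative-degree pieces contribute a copy of $R$ in $\check C^0$, and the correction this produces is exactly what makes the lengths finite and the polynomial $P_{\mathcal I}(n)$ appear on the left-hand side for $n<0$.

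To repair the argument you need an honest mechanism for extracting a finite-length computation. Two standard routes: (i) pass first to $G(\mathcal I)=\mathcal R(\mathcal I)/t^{-1}\mathcal R(\mathcal I)$, whose degree-zero piece $R/I_1$ \emph{is} Artinian so the classical Serre formula applies, and then compare with $\mathcal R(\mathcal I)$ via the exact sequence $0\to\mathcal R(\mathcal I)(1)\xrightarrow{t^{-1}}\mathcal R(\mathcal I)\to G(\mathcal I)\to 0$ and an Euler-characteristic/difference-operator argument; or (ii) prove the finiteness in part~(1) first by showing $[H^i_{\mathcal R_+}(\mathcal R(\mathcal I))]_n$ is supported only at $\m$ (use that $I$ is $\m$-primary so localizing at $\mathfrak p\ne\m$ turns $\mathcal R(\mathcal I)$ into $R_{\mathfrak p}[t,t^{-1}]$ and $\mathcal R_+$ into the unit ideal), and then run an induction on $d$ using a superficial element to identify $\Delta\big(P_{\mathcal I}-H_{\mathcal I}\big)$ with the corresponding expression one dimension down. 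Either way, the bookkeeping you defer (``a telescoping/inclusion-exclusion identity collapses the alternating sum'') is where the real content lies, and it cannot be carried out on the raw \v{C}ech terms.
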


\noindent
The following result  of Itoh is a key point in proving results about normal
Hilbert polynomials using  induction on the dimension of $R.$ See also
\cite {ci}.
\begin{theorem}{\rm \cite[Theorem 1 and Corollary 8]{i2}}\label{s1}
Let $I$ a parameter ideal of a Cohen-Macaulay analytically
unramified local ring $(R, \m)$ of dimension $d.$ Then there exists a system of
generators $x_1,\ldots,x_d$ of $I$ such that, if we put $C=R(T)/(\sum_i x_iT_i)$
and $J=IC$, where 
$R(T)=R[T]_{\m[T]}$ and $T=(T_1,\ldots,T_d)$ is a set of $d$ indeterminates. 
Then \\
{\rm (1)} $\ov{J^n} \cap R=\ov{I^n}$ for every $n \geq 0,$ \\
{\rm (2)} $\ov{J}=\ov{I}C,$ \\
{\rm (3)} $\ov{J^n}=\ov{I^n}C \cong \ov{I^n}R(T)/(\sum_i
x_iT_i)\ov{I^{n-1}}R(T)$ for large $n,$ \\
{\rm (4)} If $R$ is analytically normal and $\dim R \geq 3,$ then $C$ is
normal.\\
{\rm (5)} $\ov{e}_i(I)=\ov{e}_i(J)$ for $i=0,1,\ldots,d-1.$
\end{theorem}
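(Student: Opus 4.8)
The plan is to deduce the whole theorem from the behaviour of a \emph{generic hyperplane section}. Fix any system of generators $x_1,\dots,x_d$ of $I$ (a parameter ideal in a $d$-dimensional Cohen--Macaulay ring needs exactly $d$ generators), pass to the Nagata extension $R(T)=R[T_1,\dots,T_d]_{\m R[T]}$, and set $x=\sum_i x_iT_i$, $C=R(T)/(x)$, $J=IC$. First I would record the features of this passage: $R(T)$ is faithfully flat over $R$, again $d$-dimensional, Cohen--Macaulay, analytically unramified, with infinite residue field; integral closure commutes with this flat base change, so $\ov{I^nR(T)}=\ov{I^n}R(T)$ and $\lm_{R(T)}(R(T)/\ov{I^n}R(T))=\lm_R(R/\ov{I^n})$ for every $n$; and each $T_i$ is a unit in $R(T)$. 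Since $I$ is $\m$-primary and $d\ge 1$, $x$ lies in no minimal prime of $R(T)$, so it is part of a system of parameters, hence a nonzerodivisor, and $C$ is Cohen--Macaulay of dimension $d-1$; since the $T_i$ are units, modulo $x$ each $x_j$ lies in the ideal generated by the remaining $x_i$, so $J=IC$ is generated by $d-1$ of them and is itself a parameter ideal of $C$. Parts (1)--(3) and (5) concern only integral closures, so they need no extra hypothesis on $R$.

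Next I would set up the superficiality package. Taking generators of $I$ with indeterminate linear coefficients is the standard device for producing a superficial element, so $x$ is superficial for the integral closure filtration $\{\ov{I^n}R(T)\}$ of $IR(T)$; as $R(T)$ is Cohen--Macaulay and $x$ is a nonzerodivisor, this upgrades for $n\gg 0$ to
\[
\ov{I^{n+1}}R(T):x=\ov{I^n}R(T)\qquad\text{and}\qquad\ov{I^{n+1}}R(T)\cap xR(T)=x\,\ov{I^n}R(T).
\]
The second identity is the statement that $x^{*}$ is a nonzerodivisor on the associated graded ring in high degrees (a Valabrega--Valla phenomenon, cf.\ Theorem~\ref{vv}), and it gives at once the isomorphism $\ov{I^n}C\cong\ov{I^n}R(T)/x\,\ov{I^{n-1}}R(T)$ of part (3). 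The genuinely harder half of (3) is the equality $\ov{J^n}=\ov{I^n}C$ for large $n$, i.e.\ that the image of $\ov{I^n}R(T)$ in $C$ is \emph{already integrally closed}. For this I would pass to the normalized extended Rees ring $B=\ov{R(T)[IR(T)t,t^{-1}]}=\bigoplus_n\ov{I^n}R(T)t^n$ (a normal ring), observe that $\xi=xt$ is a generic nonzerodivisor on $B$, identify $\ov{J^n}$ with $[B/\xi B]_n$ for $n\gg 0$, and exploit the genericity of $x$ --- where it is essential that the $T_i$ are indeterminates rather than fixed general scalars of $R$. The Huneke--Itoh Intersection Theorem (Theorem~\ref{ithm2}), applied to the regular sequence generating $IR(T)$, controls the low-degree terms and pushes these identities down to all $n$: this gives (1), where $\ov{I^n}\subseteq\ov{J^n}\cap R$ is the image of $\ov{I^n}$ under $R\hookrightarrow R(T)\to C$ and the reverse inclusion follows by contracting the colon and intersection relations, and it gives (2), where $\ov IC\subseteq\ov{IC}$ is automatic and equality follows from the genericity of $x$ together with Huneke--Itoh.

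For part (5) I would argue that, by (3), the filtration $\{\ov{J^n}\}$ and the filtration $\{(\ov{I^n}R(T)+(x))/(x)\}$ on $C$ agree for large $n$, hence have the same Hilbert polynomial; and the latter is the reduction of the integral closure filtration of $IR(T)$ by the superficial nonzerodivisor $x$ in a Cohen--Macaulay ring, so its Hilbert coefficients $e_0,\dots,e_{d-1}$ agree with those of $\{\ov{I^n}R(T)\}$. Since $\ov e_i(IR(T))=\ov e_i(I)$ by faithful flatness together with the length identity above, this yields $\ov e_i(J)=\ov e_i(I)$ for $i=0,\dots,d-1$.

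The step I expect to be the main obstacle is part (4). Assuming $R$ analytically normal and $d\ge 3$, the ring $R$, hence $R(T)$, is a normal local ring of dimension $\ge 3$, and $C$ is Cohen--Macaulay of dimension $\ge 2$ (so $C$ satisfies $(S_2)$), whence by Serre's criterion it only remains to verify $(R_1)$ for $C$. This is a Bertini-type assertion: a generic hyperplane section of a normal local ring of dimension $\ge 3$ again satisfies $(R_1)$, and it is exactly here that $d\ge 3$ (so that $\dim C\ge 2$) and the normality hypothesis are needed. The cleanest route is to pass to the completion $\widehat R$, which is normal and excellent, transport the generic element to the corresponding one over $\widehat{R(T)}$, apply Flenner's Bertini theorem to conclude that this hypersurface section is normal, and descend normality back to $C$ along the faithfully flat map $C\to\widehat C$; alternatively one carries out Itoh's explicit computation with the ring $B$ above, which sidesteps having to assume $R$ itself excellent. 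Tracking the generic element through the completion and having a Bertini theorem available at all is the delicate part of the whole argument.
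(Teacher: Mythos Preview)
The paper does not supply its own proof of this theorem: it is stated as a quotation from Itoh \cite[Theorem~1 and Corollary~8]{i2} (with a pointer also to Ciuperc\v{a} \cite{ci}) and is used as a black box for the induction step in later sections. There is therefore no in-paper argument to compare your proposal against.

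Viewed on its own terms, your outline is broadly the right shape and tracks Itoh's original strategy: pass to $R(T)$, use that $x=\sum x_iT_i$ is a generic superficial element for the integral closure filtration so that $\ov{I^{n+1}}R(T)\cap xR(T)=x\,\ov{I^n}R(T)$ for large $n$, deduce the isomorphism in (3) and the equality of Hilbert coefficients in (5), and get (4) from a Bertini-type statement (Flenner) applied after completion. Your identification of the two genuinely nontrivial points is accurate: the equality $\ov{J^n}=\ov{I^n}C$ for large $n$ (which is exactly the content of the Hong--Ulrich/Ciuperc\v{a} specialization results the paper cites alongside Itoh), and the $(R_1)$ condition for $C$ in (4). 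One small correction: for (1) and (2) you invoke the Huneke--Itoh Intersection Theorem to ``push down to all $n$,'' but that theorem controls $I^n\cap\ov{I^{n+1}}$, not the contraction $\ov{J^n}\cap R$; the actual argument for (1) is a direct integral dependence check (if $r\in R$ maps into $\ov{J^n}$, lift an equation of integral dependence over $I^nC$ back to $R(T)$, then contract to $R$ using faithful flatness), and (2) needs a separate genericity argument rather than Huneke--Itoh. With that adjustment your plan is sound.
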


We quote some results of Itoh about vanishing of graded components of local
cohomology modules. See also a recent paper of 
Hong and Ulrich \cite{hu}.
\begin{theorem}{\rm \cite[Proposition 13]{i2}}\label{s2} Let $(R, \m)$ be an
analytically unramified  Cohen-Macaulay local ring of
 dimension $d \geq 2.$ Let $N=It\mathcal{R}(I)$ and 
$M=(t^{-1}, I t)\mathcal{R}(I).$
 Then \\
{\rm (1) } $H^0_M(\ov{\mathcal{R}}(I))=H^1_M(\ov{\mathcal{R}}(I))=0,$ \\
{\rm  (2) } $[H^2_M(\ov{\mathcal{R}}(I)]_j=0$ for $j \leq 0,$ \\
{\rm (3)} $H^i_M(\ov{\mathcal{R}}(I))=H^i_N(\ov{\mathcal{R}}(I))$ for
$i=0,1,\ldots,d-1.$

\end{theorem}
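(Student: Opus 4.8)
The plan is to exploit the short exact sequence relating the extended Rees algebra of the filtration $\{\ov{I^n}\}$ to its Rees algebra and to $\ov{G}(I)$, together with the two ideals $N = It\,\mathcal R(I)$ and $M = (t^{-1}, It)\,\mathcal R(I)$. First I would set $A = \ov{\mathcal R}(I) = \bigoplus_{n\in\Z}\ov{I^n}t^n$ and record the exact sequence $0 \to t^{-1}A \to A \to \ov G(I) \to 0$, where multiplication by $u = t^{-1}$ identifies $t^{-1}A$ with $A$ up to a degree shift. Since $R$ is analytically unramified and $d\ge 2$, the element $t^{-1}$ is a nonzerodivisor on $A$, so $A$ has positive depth with respect to $M$; this already forces $H^0_M(A) = 0$, the first half of (1). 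For the vanishing $H^1_M(A) = 0$ one needs that $\depth_M A \ge 2$: here I would invoke that $\ov G(I)$ has positive depth — because $R$ is Cohen-Macaulay analytically unramified, the normal filtration satisfies enough of the Valabrega-Valla/Burch-type conditions that a general superficial element for $\ov G(I)$ exists and is a nonzerodivisor — and then the long exact cohomology sequence attached to $0 \to t^{-1}A \xrightarrow{u} A \to \ov G(I) \to 0$ propagates depth $\ge 2$ from $A$ and $\ov G(I)$ back to $A$ via the isomorphism induced by $u$.

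For part (2), the graded pieces: I would run the long exact sequence of local cohomology for $0 \to A(-1)\xrightarrow{u} A \to \ov G(I)\to 0$ in cohomological degree $2$, taking $n$-th graded components. This gives an exact strand
\[
[H^1_M(\ov G(I))]_n \to [H^2_M(A)]_{n-1} \xrightarrow{u} [H^2_M(A)]_n \to [H^2_M(\ov G(I))]_n .
\]
Since $\ov G(I)$ is a standard graded ring over the Artinian local ring $R/\ov I$, its local cohomology with support in its irrelevant ideal vanishes in all sufficiently negative degrees, and for the depth-$\ge 1$ associated graded ring $H^1_M(\ov G(I))$ vanishes in nonpositive degrees as well (this is the a-invariant/reduction-number input, where Theorem \ref{s1} lets us pass to the one-dimensional-modulo-a-regular-sequence situation $C = R(T)/(\sum x_iT_i)$ and compute). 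Feeding these vanishings in, multiplication by $u$ on $[H^2_M(A)]_j$ is injective for $j\le 0$; but $[H^2_M(A)]_j = 0$ for $j \ll 0$ by the graded finiteness in Theorem \ref{blanc}(1) applied together with Grothendieck vanishing, so ascending along $u$ forces $[H^2_M(A)]_j = 0$ for all $j\le 0$, which is (2).

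Finally, for part (3), the comparison $H^i_M(A) = H^i_N(A)$ for $i \le d-1$: the ideals $M$ and $N$ differ only by the extra generator $u = t^{-1}$, and $\sqrt{M} = \sqrt{N + (u)}$. I would use the Mayer-Vietoris / change-of-support exact sequence for local cohomology with respect to $N$ and $(u)$, noting that $u$ is a nonzerodivisor on $A$, so $H^0_{(u)}(A) = 0$ and the comparison sequence collapses; more directly, since $A/uA = \ov G(I)$ has dimension $d$ and depth $\ge 1$, and the filtration is $I$-admissible so $\dim A = d+1$, a standard depth count shows $H^i_{N}(A) \cong H^i_M(A)$ through cohomological degree $d-1$, the obstruction living only in degree $d$ and above. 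The main obstacle, and where I would spend the most care, is part (2): establishing the vanishing of $[H^1_M(\ov G(I))]_n$ for $n\le 0$ genuinely uses that the normal filtration on a Cohen-Macaulay analytically unramified ring has good reduction-theoretic behaviour, and marshalling Theorem \ref{s1} to reduce to the low-dimensional case (so that the a-invariant estimate becomes elementary) is the technical heart of the argument.
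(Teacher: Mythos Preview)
The paper does not give its own proof of this result; it is quoted from Itoh \cite[Proposition~13]{i2}. So there is no proof in the paper to compare against, but your proposal can still be assessed on its merits.

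There is a genuine gap in your argument for (1), and it propagates into (2). You want $H^1_M(\ov{\mathcal R}(I))=0$ by showing that $u=t^{-1}$ together with a superficial element gives an $M$-regular sequence of length two on $A=\ov{\mathcal R}(I)$; for this you invoke that $\ov G(I)=A/uA$ has positive depth. But $\depth \ov G(I)\ge 1$ is not an input here --- it is a \emph{consequence} of the very statement you are trying to prove. Indeed, from the exact sequence $0\to A(-1)\xrightarrow{u} A\to \ov G(I)\to 0$ one gets $H^0_M(A)\to H^0_M(\ov G(I))\to H^1_M(A)$, so $H^0_M(\ov G(I))=0$ follows from (1), not the other way around. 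The phrases ``the normal filtration satisfies enough of the Valabrega--Valla/Burch-type conditions'' and ``a general superficial element \ldots\ is a nonzerodivisor'' are exactly what is at stake, and they are not available a priori; large parts of this survey (e.g.\ Theorem~\ref{HM}) are devoted to deciding \emph{when} $\ov G(I)$ has good depth. Your step (2) then inherits the same circularity, since the vanishing of $[H^1_M(\ov G(I))]_n$ for $n\le 0$ that you need is again a depth statement about $\ov G(I)$.

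Itoh's actual argument avoids $\ov G(I)$ entirely: he shows directly that $\ov{\mathcal R}(I)$ satisfies Serre's condition $(S_2)$, using the valuative description of integral closure (in an analytically unramified ring, $\ov{I^n}$ is the contraction from the Rees valuation rings, so $\ov{\mathcal R}(I)$ is an intersection of its localizations at the height-one primes containing $u$ together with $R[t,t^{-1}]$). Since $M$ has height at least two, $(S_2)$ immediately gives $\grade(M,\ov{\mathcal R}(I))\ge 2$ and hence (1). Part (2) is then obtained from this $(S_2)$ description in negative degrees. Your sketch for (3) is essentially correct: the clean way to package it is the exact sequence of \v{C}ech complexes
\[
0\to C(\underline{x};R)[t,t^{-1}]\to C(u,\underline{xt};\ov{\mathcal R}(I))\to C(\underline{xt};\ov{\mathcal R}(I))\to 0,
\]
which the paper in fact records just before Lemma~\ref{ilema}; since $R$ is Cohen--Macaulay, $H^i_\m(R)=0$ for $i<d$, and the long exact sequence gives $H^i_M(\ov{\mathcal R}(I))\cong H^i_N(\ov{\mathcal R}(I))$ for $i\le d-1$.
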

 \noindent Now we prove  nonnegativity  of $\ov{e}_2(I).$ 
For a generalisation to good filtrations in Cohen-Macaulay modules, see
\cite[Proposition 3.1]{rv}.

\begin{theorem}
 Let $(R,\m)$ be an analytically unramified Cohen-Macaulay local ring of
dimension $d \geq 2$ and let $I$ be an $\m$-primary ideal. Then $$\ov{e}_2(I)
\geq 0.$$
\end{theorem}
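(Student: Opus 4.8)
The plan is to reduce to dimension two by a standard superficial-element/general-hyperplane argument, and then in dimension two use Blancafort's formula (Theorem \ref{blanc}) together with Itoh's vanishing results (Theorems \ref{s1} and \ref{s2}) to express $\ov{e}_2(I)$ as a nonnegative sum of lengths of graded pieces of local cohomology of $\ov{\mathcal R}(I)$. First I would pass to the faithfully flat extension $R(T)=R[T]_{\m[T]}$, which changes nothing about the normal Hilbert coefficients and makes the residue field infinite; by Theorem \ref{HM} (or rather the reductions it permits) I may assume $I$ is a parameter ideal, since replacing $I$ by a minimal reduction $J$ leaves $\ov{I^n}=\ov{J^n}$ unchanged. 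Then I would apply Theorem \ref{s1}: choosing generators $x_1,\dots,x_d$ of $I$ as in that theorem, and setting $C=R(T)/(\sum_i x_iT_i)$, $J=IC$, we have $\ov e_i(I)=\ov e_i(J)$ for $i\le d-1$, so in particular $\ov e_2(I)=\ov e_2(J)$, and $C$ is again analytically unramified and Cohen-Macaulay of dimension $d-1$. Iterating, I reduce to the case $d=2$.

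In dimension $d=2$, write $\mathcal R=\ov{\mathcal R_+}(I)=\bigoplus_{n\ge 0}\ov{I^n}t^n$. By Blancafort's formula (Theorem \ref{blanc}(2)) applied to the filtration $\mathcal I=\{\ov{I^n}\}$,
\[
\ov P_I(n)-\ov H_I(n)=\sum_{i=0}^{2}(-1)^i\lm\bigl([H^i_{\mathcal R_+}(\mathcal R)]_n\bigr),
\]
and each term has finite length for every $n$ by Theorem \ref{blanc}(1). Now I would sum the coefficient identity: comparing the right-hand side $P_I(n)-H_I(n)$, which for $n\gg 0$ vanishes, with the Hilbert–series manipulation, one gets the classical formula
\[
\ov e_2(I)=\sum_{n\ge 0}n\,\lm\bigl(\ov{I^{n}}/(\,\text{appropriate term}\,)\bigr)
\]
more precisely $\ov e_2(I)=\sum_{j}\sum_{i}(-1)^i\binom{?}{?}\lm([H^i_{\mathcal R_+}(\mathcal R)]_j)$-type expression obtained by writing the Hilbert function via the local cohomology and extracting the degree-$(d-2)=0$ coefficient. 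The key point is that the alternating sum collapses: $H^0_{\mathcal R_+}(\mathcal R)=0$ because $\mathcal R$ is a domain of positive dimension with $t$ a nonzerodivisor (equivalently, by Theorem \ref{s2}(1) with $M$ vs.\ $N$), and $H^1_{\mathcal R_+}(\mathcal R)$ vanishes in the relevant degrees by Theorem \ref{s2} as well, so only the $H^2$ term — which carries a $+$ sign — survives, yielding a sum of nonnegative lengths.

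The main obstacle is getting the bookkeeping exactly right: translating Blancafort's pointwise identity into a clean formula for the second coefficient $\ov e_2(I)$ as a manifestly nonnegative quantity, and verifying that the contributions of $H^0$ and $H^1$ of $\ov{\mathcal R_+}(I)$ (as opposed to the extended Rees algebra, for which Theorem \ref{s2} is literally stated) either vanish or contribute with the correct sign. Concretely I would relate $H^i_{\mathcal R_+}(\mathcal R)$ for the homogeneous Rees algebra to $H^i$ of the extended Rees algebra $\ov{\mathcal R}(I)$ with respect to $N=It\ov{\mathcal R}(I)$, where Theorem \ref{s2} gives $H^0=H^1=0$ and $[H^2]_j=0$ for $j\le 0$; this last vanishing is exactly what forces the surviving local cohomology to live in positive degrees and makes the resulting coefficient nonnegative. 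Once the two obstructing modules are controlled, $\ov e_2(I)\ge 0$ follows immediately from positivity of lengths.
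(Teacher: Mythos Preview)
Your overall strategy---reduce to a parameter ideal, induct down to dimension two via Itoh's general-element construction (Theorem~\ref{s1}), and in dimension two combine Blancafort's formula with Itoh's vanishing (Theorem~\ref{s2})---is exactly the paper's approach. The gap is in the dimension-two step: you are making it far harder than it is. There is no need to ``sum the coefficient identity'' or extract anything from a Hilbert-series manipulation; your placeholder expression with $\binom{?}{?}$ signals that you have not found the mechanism, and indeed the mechanism you are reaching for is not the one that works cleanly here.

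The paper simply evaluates Blancafort's identity at $n=0$. For $d=2$ one has $\ov H_I(0)=\lm(R/R)=0$ and $\ov P_I(0)=\ov e_2(I)$, so
\[
\ov e_2(I)=\ov P_I(0)-\ov H_I(0)=\sum_{i=0}^{2}(-1)^i\lm\bigl([H^i_{N}(\ov{\mathcal R}(I))]_0\bigr).
\]
By Theorem~\ref{s2}(1),(3) the $i=0$ and $i=1$ terms vanish outright, leaving $\ov e_2(I)=\lm\bigl([H^2_{N}(\ov{\mathcal R}(I))]_0\bigr)\ge 0$. That is the entire base case. Note also that your justification ``$H^0=0$ because $\mathcal R$ is a domain'' is not available: $R$ is only assumed analytically unramified Cohen--Macaulay, not a domain. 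You should invoke Theorem~\ref{s2} directly, and be careful to work with the extended Rees algebra $\ov{\mathcal R}(I)$ and $N=It\mathcal R(I)$, as that is where Theorem~\ref{s2} is stated.
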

\begin{proof}
We apply induction on $d$. Let $d=2$. By Theorem \ref{blanc},   
$$ \ov{P}_I(n)-\ov{H}_I(n)=\sum_{i=0}^2
(-1)^i\lm([H^i_N(\ov{\mathcal{R}}(I))]_n)$$ 
for all $n \in \mathbb{Z}$. Taking $n=0$ we get $
\ov{e}_2(I)=\lm([H^2_N(\ov{\mathcal{R}}(I))]_0) \geq 0.$ Let $d > 2$. Let $C$
and $J$ 
be as in Theorem \ref{s1}. Then by induction hypothesis $\ov{e}_2(I)=\ov{e}_2(J)
\geq 0$. 
\end{proof}

\noindent
The following theorem of Itoh gives a lower bound on $\ov{e}_2(I).$ For a generalisation
for good filtration of modules see \cite[Theorem 3.1]{rv}.

\begin{theorem}{\rm \cite[Theorem 2(2)]{i2}}\label{s3}
Let $(R,\m)$ be a $d$-dimensional analytically unramified Cohen-Macaulay local
ring.
 Let $I$ be a parameter ideal. Then $$\ov e_2(I)\geq \ov e_1(I)-\lm(\ov I/I).$$ 
\end{theorem}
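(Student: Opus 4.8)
The plan is to reduce to dimension two by the standard superficial-element/general-element technique already set up in the excerpt, and then to extract the inequality from Blancafort's local cohomology formula (Theorem~\ref{blanc}) together with Itoh's vanishing results (Theorem~\ref{s2}). First I would pass to the case $d=2$: using Theorem~\ref{s1}, for a parameter ideal $I$ one finds $C=R(T)/(\sum_i x_iT_i)$ and $J=IC$ with $\ov e_i(I)=\ov e_i(J)$ for $i=0,1,\dots,d-1$, and moreover $\lm(\ov I/I)$ is preserved under this passage since $\ov J=\ov I C$ and $J=IC$ with $C$ faithfully flat modulo the single general linear form. Iterating, the assertion for $d$ follows from the assertion for $d=2$, because both sides of the claimed inequality $\ov e_2(I)\ge \ov e_1(I)-\lm(\ov I/I)$ are expressed through $\ov e_1$ and $\ov e_2$, the lower of which ($\ov e_1$) is stable and $\ov e_2(I)=\ov e_2(J)$ as well when $d-1\ge 2$.

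For the base case $d=2$, I would apply Theorem~\ref{blanc}(2) to the filtration $\{\ov{I^n}\}$. Writing $N=It\,\mathcal R(I)$ (equivalently using $\mathcal R_+$), one gets for every $n$
$$\ov P_I(n)-\ov H_I(n)=\sum_{i=0}^{2}(-1)^i\lm\big([H^i_N(\ov{\mathcal R}(I))]_n\big).$$
By Theorem~\ref{s2}, $H^0_N(\ov{\mathcal R}(I))=H^1_N(\ov{\mathcal R}(I))=0$, so the right-hand side reduces to $\lm([H^2_N(\ov{\mathcal R}(I))]_n)$. Evaluating at $n=0$ gives $\ov e_2(I)=\lm([H^2_N(\ov{\mathcal R}(I))]_0)$, and evaluating at $n=1$ relates $\lm([H^2_N(\ov{\mathcal R}(I))]_1)$ to $\ov P_I(1)-\ov H_I(1)$. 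Since $\ov H_I(1)=\lm(R/\ov I)$ and $\ov P_I(1)=\ov e_0(I)-\ov e_1(I)$ (reading off the degree-two normal Hilbert polynomial), while $\lm(R/I)=\ov e_0(I)$ by the dimension-one multiplicity formula once we cut down further if needed, one obtains an identity of the shape $\lm([H^2_N(\ov{\mathcal R}(I))]_1)=\lm(\ov I/I)-\ov e_1(I)+\lm([H^2_N]_0)$, i.e. combining with the $n=0$ computation, $\ov e_2(I)-\big(\ov e_1(I)-\lm(\ov I/I)\big)=\lm([H^2_N(\ov{\mathcal R}(I))]_1)\ge 0$. That nonnegativity of a length is exactly the desired inequality.

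The main obstacle I expect is making the $n=1$ bookkeeping airtight: one must carefully translate $\ov P_I(1)-\ov H_I(1)$ into the coefficients $\ov e_0,\ov e_1$ and the lengths $\lm(R/I),\lm(R/\ov I)$, and verify that the graded piece $[H^2_N(\ov{\mathcal R}(I))]_0$ appears with the right sign and multiplicity in both evaluations so that it cancels cleanly, leaving a single nonnegative length $\lm([H^2_N(\ov{\mathcal R}(I))]_1)$. A secondary technical point is ensuring that in the reduction to $d=2$ the quantity $\lm(\ov I/I)$ truly is invariant under passing to $J=IC$; this follows from Theorem~\ref{s1}(1),(2) since $J\cap R=I$, $\ov J\cap R=\ov I$, and $C$ is a flat local extension with trivial residue extension on the relevant quotients, but it should be stated explicitly. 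Once these are in place, the induction closes and the theorem follows.
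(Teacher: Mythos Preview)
Your approach is exactly the paper's: induction on $d$ via Theorem~\ref{s1} for the reduction step, and Blancafort's formula together with Itoh's vanishing (Theorem~\ref{s2}) for the base case $d=2$. The paper's execution of the base case is slightly cleaner than yours: it evaluates only at $n=1$, observing directly that
\[
\ov P_I(1)-\ov H_I(1)=\lm\big([H^2_N(\ov{\mathcal R}(I))]_1\big)\ge 0,
\]
and then reads off $\ov e_0(I)-\ov e_1(I)+\ov e_2(I)\ge \lm(R/\ov I)$, which together with $\ov e_0(I)=\lm(R/I)$ yields the inequality. Your detour through $n=0$ is unnecessary, and note that your stated formula $\ov P_I(1)=\ov e_0(I)-\ov e_1(I)$ is wrong in dimension two: the correct value is $\ov e_0(I)-\ov e_1(I)+\ov e_2(I)$, and it is precisely this missing $\ov e_2(I)$ term that reappears in your final identity as $\lm([H^2_N]_0)$, so your end formula is right even though the intermediate sentence is not. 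Also, the justification ``by the dimension-one multiplicity formula once we cut down further'' is off: $\ov e_0(I)=e_0(I)=\lm(R/I)$ holds immediately in dimension $d$ because $I$ is a parameter ideal in a Cohen--Macaulay ring, with no further reduction needed. For the induction step, the paper records exactly your point that $\lm(\ov I/I)=\lm(\ov J/J)$ since $\ov J=\ov I C$.
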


\begin{proof}
Apply induction on $d$. For $d=2$, by Theorems \ref{blanc} and \ref{s2} ,
$$\ov{P}_I(1)-\ov{H}_I(1)=\lm([H^2_N(\ov{R}(I))]_1) \geq 0.$$ 
Hence $\ov{e}_0(I)-\ov{e}_1(I)+\ov{e}_2(I) \geq \lambda(R/\ov{I})$. Since $I$ is
a 
parameter ideal, $\ov{e}_0(I)=\lambda(R/I)$. Therefore $\ov{e}_2(I) \geq
\ov{e}_1(I)-\lambda(\ov{I}/I)$. Let $d >2 $. Let $C$ and 
$J$ be as in Theorem \ref{s1}. Since $\ov{J}=\ov{I}C$,
$\lm(\ov{I}/I)=\lm(\ov{J}/J)$. Hence the inequality follows by induction 
hypothesis. 
\end{proof}

\begin{theorem}
Under the assumptions of Theorem \ref{s3},
\begin{enumerate}
  \item[{\rm (1)}]  {\rm \cite[Theorem 2(2)]{i2}}\label{s7}
 $\ov{e}_2(I)=\ov{e}_1(I)-\lm(\ov{I}/I) \;\; \text{if and only if}\;\;  
\ov r(I) \leq 2.$
 \item[{\rm (2)}]  {\rm \cite[Theorem 3.12]{cpr}} If $\ov r(I)\leq 2$ then $\ov
G(I)$ is Cohen-Macaulay and its Hilbert series is given by
 $$\ov F_I(t)=\dfrac{\lm(R/\ov I)+[\ov e_0(I)-\lm(R/\ov I)-\lm(\ov{I^2}/I\ov
I)]t+\lm(\ov{I^2}/I\ov I)t^2}{(1-t)^d}.$$
\end{enumerate}

\end{theorem}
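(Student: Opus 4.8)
The plan is to treat part (2) together with the ``if'' direction of (1) first, and then to prove the ``only if'' direction of (1) by induction on $d\ge 2$.

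\smallskip
\textbf{The ``if'' direction of (1), and part (2).} Since $I$ is a parameter ideal in the Cohen--Macaulay ring $R$, it is its own minimal reduction of the filtration $\mathcal I=\{\ov{I^n}\}$. Assuming $\ov r(I)\le 2$ we have $\ov{I^n}=I\ov{I^{n-1}}$ for $n\ge 3$, so $\ov{I^n}\subseteq I$ for $n\ge 3$; for $n=2$ the Huneke--Itoh Intersection Theorem (Theorem \ref{ithm2}) gives $I\cap\ov{I^2}=I\ov I$, and $I\cap\ov I=I$ trivially. Thus $I\cap\ov{I^n}=I\ov{I^{n-1}}$ for all $n\ge 1$, so by the Valabrega--Valla criterion (Theorem \ref{vv}) $\ov G(I)$ is Cohen--Macaulay. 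Hence $\ov F_I(t)=h(t)/(1-t)^d$ with $h(t)=h_0+h_1t+h_2t^2$ (the degree of $h$ is at most $\ov r(I)\le 2$). Matching the lowest coefficient, the value, and the first two derivatives at $t=1$ with the standard identities $\ov e_i(I)=h^{(i)}(1)/i!$ gives $h_0=\lm(R/\ov I)$, $h_2=\ov e_2(I)$, $h_0+h_1+h_2=\ov e_0(I)$ and $h_1+2h_2=\ov e_1(I)$; eliminating yields $\ov e_2(I)=\ov e_1(I)-\ov e_0(I)+\lm(R/\ov I)=\ov e_1(I)-\lm(\ov I/I)$, which is the ``if'' direction of (1). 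Substituting this back and using Theorem \ref{s5} (so that $\ov e_1(I)-\lm(\ov I/I)=\lm(\ov{I^2}/I\ov I)$) rewrites $h(t)$ as exactly the numerator displayed in (2), and the Cohen--Macaulayness of $\ov G(I)$ has already been established; this proves (2).

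\smallskip
\textbf{The ``only if'' direction of (1): base case $d=2$.} Assume $\ov e_2(I)=\ov e_1(I)-\lm(\ov I/I)$; after the harmless faithfully flat base change $R\rightsquigarrow R(u)$ we may assume $R/\m$ is infinite. Let $N=It\,\ov{\mathcal R}(I)$ (so that $H^i_{\mathcal R_+}(\ov{\mathcal R}(I))=H^i_N(\ov{\mathcal R}(I))$) and $M=(t^{-1},It)\,\ov{\mathcal R}(I)$, as in Theorems \ref{blanc} and \ref{s2}. By Theorem \ref{s2} we have $H^0_N(\ov{\mathcal R}(I))=H^1_N(\ov{\mathcal R}(I))=0$, so Blancafort's formula (Theorem \ref{blanc}) reads $\ov P_I(n)-\ov H_I(n)=\lm([H^2_N(\ov{\mathcal R}(I))]_n)$ for all $n\in\Z$. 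Moreover $H^0_M(\ov{\mathcal R}(I))=H^1_M(\ov{\mathcal R}(I))=0$ gives $\depth\ov{\mathcal R}(I)\ge 2$, and since $t^{-1}$ is a nonzerodivisor with $\ov{\mathcal R}(I)/(t^{-1})\cong\ov G(I)$ we get $\depth\ov G(I)\ge 1=d-1$; hence Theorem \ref{redn} applies and $\ov r(I)=n(\mathcal I)+2$. So it suffices to show $n(\mathcal I)\le 0$, i.e. $[H^2_N(\ov{\mathcal R}(I))]_n=0$ for every $n\ge 1$. Evaluating the Blancafort identity at $n=1$ and using $\ov e_0(I)=\lm(R/I)$ gives $\lm([H^2_N(\ov{\mathcal R}(I))]_1)=\ov e_2(I)-\ov e_1(I)+\lm(\ov I/I)$, which vanishes by hypothesis.

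\smallskip
\textbf{The crux, and the inductive step $d\ge 3$.} It remains to \emph{propagate} the degree‑one vanishing: from $[H^2_N(\ov{\mathcal R}(I))]_1=0$ one must deduce $[H^2_N(\ov{\mathcal R}(I))]_n=0$ for all $n\ge 1$. I expect this to be the main obstacle. The intended route is to show that multiplication by a sufficiently general element of $\ov I$ induces surjections $[H^2_N(\ov{\mathcal R}(I))]_n\to[H^2_N(\ov{\mathcal R}(I))]_{n+1}$ for $n\ge 1$ (so that the zero module $[H^2_N]_1$ exhausts everything above it), using the positive depth of $\ov G(I)$ just established and the fact that $\ov{\mathcal R}(I)$ is generated in degrees $0$ and $1$ over $R[t^{-1}]$. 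For the inductive step, let $C=R(T)/(\sum_i x_iT_i)$, $J=IC$ be the general hyperplane section of Theorem \ref{s1}. By part (5) of that theorem $\ov e_i(I)=\ov e_i(J)$ for $i\le d-1$, in particular for $i=1,2$; and since $\sum_i x_iT_i\in IR(T)$, reducing modulo $(\sum_i x_iT_i)$ gives $\ov J/J\cong \ov I R(T)/IR(T)=(\ov I/I)\otimes_RR(T)$, so $\lm(\ov J/J)=\lm(\ov I/I)$. Thus the hypothesis descends to the parameter ideal $J$ in the analytically unramified Cohen--Macaulay local ring $C$, and by induction $\ov r(J)\le 2$; finally, using $\ov{J^n}\cap R=\ov{I^n}$ for all $n$ and $\ov{J^n}=\ov{I^n}C$ for $n\gg 0$ (Theorem \ref{s1}), one transfers $\ov r(J)\le 2$ back to $\ov r(I)\le 2$, completing the induction.
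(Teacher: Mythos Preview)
Your overall architecture matches the paper's: the ``if'' direction via Cohen--Macaulayness of $\ov G(I)$, the $d=2$ base case via local cohomology and Theorem~\ref{redn}, and the inductive step via the general hyperplane section of Theorem~\ref{s1}. Two points deserve attention.

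\medskip
\textbf{The crux in the base case.} Your instinct that the vanishing $[H^2_N(\ov{\mathcal R}(I))]_1=0$ must be propagated to all $n\ge 1$ is exactly right, and the surjectivity of multiplication by a general $at$ is the correct mechanism. However, your stated justification --- that $\ov{\mathcal R}(I)$ is generated in degrees $0$ and $1$ over $R[t^{-1}]$ --- is false: this would force $\ov{I^n}=(\ov I)^n$, which fails in general for the integral closure filtration. The paper simply cites \cite[Lemma 4.3.5]{b1}. A correct direct argument: since $\sqrt{N}=\sqrt{(x_1t,x_2t)}$ is generated up to radical by $d=2$ elements, $H^3_N\equiv 0$; then for $at\in N$ a nonzerodivisor on $\ov{\mathcal R}(I)$ (available since $\depth_M\ov{\mathcal R}(I)\ge 2$), the long exact sequence for $0\to\ov{\mathcal R}(I)(-1)\xrightarrow{at}\ov{\mathcal R}(I)\to\ov{\mathcal R}(I)/(at)\to 0$ together with $H^2_N(\ov{\mathcal R}(I)/(at))=0$ (cohomological dimension $\le 1$) gives the desired surjection $[H^2_N]_{n-1}\twoheadrightarrow[H^2_N]_n$.

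\medskip
\textbf{The lifting in the inductive step.} Your transfer ``$\ov r(J)\le 2\Rightarrow\ov r(I)\le 2$'' via $\ov{J^n}\cap R=\ov{I^n}$ and $\ov{J^n}=\ov{I^n}C$ is too quick: Theorem~\ref{s1}(3) gives $\ov{J^n}=\ov{I^n}C$ only for \emph{large} $n$, so your argument yields $\ov{I^n}=I\ov{I^{n-1}}$ only for $n\gg 0$, not for all $n\ge 3$. The paper closes this gap by invoking \cite[Proposition~17]{i2}, which lifts $\ov{J^{n+2}}=J^n\ov{J^2}$ (all $n\ge 0$) back to $\ov{I^{n+2}}=I^n\ov{I^2}$ (all $n\ge 0$). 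This lifting is genuinely nontrivial and should be cited or reproved; the information you listed does not suffice on its own.
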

\begin{proof} Let $\ov{r}(I) \leq 2$. By Huckaba-Marley Theorem,  
$$ \sum_{n\geq1}\lambda(\ov{I^n}/I \cap \ov{I^{n}}) \leq \ov{e}_1(I) \leq \sum
_{n\geq 1} \lambda(\ov{I^n}/I\ov{I^{n-1}})=
\lm(\ov{I}/I)+\lm(\ov{I^2}/I\ov{I}).$$ Using Huneke-Itoh Intersection theorem we
get $I \cap \ov{I^2}=I\ov{I}$. Therefore 
$$\ov{e}_1(I)=\sum_{n \geq 1}\lambda(\ov{I^n}/I \cap \ov{I^{n}})=\lm(\ov
I/I)+\lm(\ov{I^2}/I\ov I).$$
Hence by Theorem \ref{HM}, $\ov{G}(I)$ is Cohen-Macaulay. Let $I=(x_1, x_2,
\ldots, x_d).$ Let $x_i^*$ denote the image of $x_i$ in 
$\ov I/\ov {I^2}.$ Let $\mathcal{F}=\{\ov{I^n} +I/I\} $. Then 
\begin{eqnarray*}
 \ov{G}(I)/(x_1^*,\cdots,x_d^*)&=& G(\mathcal{F})\\
&=& \bigoplus_{n \geq 0} \frac{\ov{I^n}+I}{\ov{I^{n+1}}+I}\\
&=& \frac{R}{\ov{I}} \oplus \frac{\ov{I}}{\ov{I^2}+I} \oplus
\frac{\ov{I^2}+I}{\ov{I^3}+I}.
\end{eqnarray*}
Therefore 
 $$H(\ov{G}(I),t) = \frac{\lambda(R/\ov I)+\lambda(\ov{I}/\ov{I^2}+I)t+
 \lambda(\ov{I^2}+I/I)t^2}{(1-t)^d}.$$
Hence 
$$\ov{e}_2(I) = \lambda(\ov{I^2}+I/I) = \lambda(\ov{I^2}/ I \cap
\ov{I^2})=\lambda(\ov{I^2}/I\ov{I}).$$ Also 
$\ov{e}_1(I)=\lambda(\ov{I}/I)+\lambda(\ov{I^2}/I \ov{I})$. Therefore
$\ov{e}_2(I)=\ov{e}_1(I)-\lambda(\ov{I}/I)$.\\
\noindent Conversely let $\ov{e}_2(I)=\ov{e}_1(I)-\lm(\ov{I}/I)$. Use induction
on $d$. Let $d=2$. Since $\lm(R/\ov{I})=
\ov{e}_0(I)-\ov{e}_1(I)+\ov{e}_2(I)$, $\ov{P}_I(1)=\ov{H}_I(1)$. Therefore by
Theorems \ref{blanc} and \ref{s2}, 
$\lm([H^2_N(\ov{\mathcal{R}}(I))]_1)=0$. By \cite[Lemma 4.3.5]{b1},
$\lm([H^2_N(\ov{\mathcal{R}}(I))]_n)=0$ for all $n \geq 1$. 
Hence $\ov{P}_I(n)=\ov{H}_I(n)$ for 
all $n \geq 1.$   
Hence $\ov{n}(I) \leq 0$. By Theorem \ref{redn}, $\ov{r}(I) \leq 2$. 
Consider $C$ as and $J$ as in theorem \ref{s1}. Then by induction
$\ov{J^{n+2}}=J^n\ov{J^2}$ for all $n \geq 0$. Therefore by  \cite[Proposition
17]{i2} $\ov{I^{n+2}}=I^n\ov{I^2}$ for  all $n \geq 0.$ Hence $\ov r(I) \leq
2.$ 
\end{proof}

\noindent Now we analyse vanishing of $\ov{e}_2(I)$. By Theorem \ref{s4},
$\lm(R/I_1)=e_0(\mathcal{F})-e_1(\mathcal{F})$ 
if and only if $r(\mathcal{F})\leq 1$. In this case, $G(\mathcal{F})$ is
Cohen-Macaulay by Huckaba-Marley Theorem. Hence 
$e_2(\mathcal{F})=0$. For integral closure filtration,
$\mathcal{F}=\{\ov{I^n}\}$, converse is also true. In other words 
vanishing of $\ov{e}_2(I)$ is sufficient to gurantee that $\ov{r}(I) \leq 1$.
Huneke \cite[Theorem 4.5]{hun1}  proved this 
if $d=2$. 

 \begin{theorem}\label{s8}
  Let $(R,\m)$ be a $d$-dimensional analytically unramified Cohen-Macaulay local
ring with infinite residue field and let $I$ be 
an $\m$-primary ideal. Then $\ov e_2(I)=0$ if and only if $\ov{r}(I) \leq 1 .$ 
 \end{theorem}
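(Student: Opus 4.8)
The plan is to prove the two implications separately, with the nontrivial content in the forward direction $\ov e_2(I)=0 \Rightarrow \ov r(I)\leq 1$. I would begin by recording the easy converse: if $\ov r(I)\leq 1$, then $\ov r(I)\leq 2$, so Theorem \ref{s7}(2) applies and gives $\ov e_2(I)=\lm(\ov{I^2}/I\ov I)$; but $\ov r(I)\leq 1$ forces $\ov{I^2}=I\ov I$, whence $\ov e_2(I)=0$. (Alternatively, reduction number at most one already gives $\ov G(I)$ Cohen-Macaulay with $h$-polynomial of degree $\leq 1$ via Huckaba-Marley and Corollary \ref{s4}, so $\ov e_2(I)=0$ directly.) For the forward direction I would reduce to the case of a parameter ideal: passing to a minimal reduction $J$ of $\{\ov{I^n}\}$, we have $\ov{I^n}=\ov{J^n}$ for all $n$, so $\ov e_i(I)=\ov e_i(J)$ for every $i$ and $\ov r(I)=\ov r(J)$; thus it suffices to treat parameter ideals, and we may assume the residue field is infinite.

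Next I would run an induction on $d=\dim R$, using Theorem \ref{s1} to descend the dimension. For $d\geq 3$, choose $C=R(T)/(\sum x_iT_i)$ and $J=IC$ as in Theorem \ref{s1}; then $\ov e_i(I)=\ov e_i(J)$ for $i=0,1,\dots,d-1$, in particular $\ov e_2(J)=\ov e_2(I)=0$, and $C$ is an analytically unramified Cohen-Macaulay local ring of dimension $d-1$ (after reducing to the normal case when needed). By induction $\ov r(J)\leq 1$, i.e. $\ov{J^{n+1}}=J\ov{J^n}$ for all $n$; then Theorem \ref{s1}(1),(3) together with an argument of the type appearing in the proof of Theorem \ref{s7} (passing $\ov{J^{n+1}}=J\,\ov{J^n}$ back through $\ov{J^n}\cap R=\ov{I^n}$, cf. \cite[Proposition 17]{i2}) yields $\ov{I^{n+1}}=I\,\ov{I^n}$ for all $n$, so $\ov r(I)\leq 1$.

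The base case $d=2$ is the main obstacle and is where Huneke's original argument \cite[Theorem 4.5]{hun1} enters. Here I would invoke the Blancafort formula (Theorem \ref{blanc}) together with Itoh's vanishing (Theorem \ref{s2}): for $d=2$ one has $\ov P_I(n)-\ov H_I(n)=\lm([H^2_N(\ov{\mathcal R}(I))]_n)$ and this module vanishes in degrees $n\leq 0$, while the hypothesis $\ov e_2(I)=0$ combined with $\ov e_0(I)-\ov e_1(I)+\ov e_2(I)=\lm(R/\ov I)$ and $\ov e_0(I)=\lm(R/I)$ forces $\lm(\ov I/I)=\ov e_1(I)-\ov e_2(I)=\ov e_1(I)$. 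Then by Itoh's lower bound (Theorem \ref{s3}) $0=\ov e_2(I)\geq \ov e_1(I)-\lm(\ov I/I)=0$, so equality holds in Theorem \ref{s3}, and Theorem \ref{s7}(1) gives $\ov r(I)\leq 2$ together with $\ov e_2(I)=\lm(\ov{I^2}/I\ov I)$; since $\ov e_2(I)=0$, we get $\ov{I^2}=I\ov I$, and combined with $\ov r(I)\leq 2$ this collapses to $\ov{I^{n+1}}=I\ov{I^n}$ for all $n\geq 1$, i.e. $\ov r(I)\leq 1$. The delicate point to get right is the bookkeeping showing that the $d=2$ equality $\ov{I^2}=I\ov I$ plus $\ov r(I)\leq 2$ really forces $\ov r(I)\leq 1$ rather than merely $\ov r(I)\leq 2$, and making the descent step faithfully transport the relation $\ov{J^{n+1}}=J\ov{J^n}$ back to $R$; both are handled by the already-cited results of Itoh.
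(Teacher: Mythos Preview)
Your converse is fine and matches the paper. In the forward direction, however, the $d=2$ base case has a genuine slip: you assert $\ov e_0(I)-\ov e_1(I)+\ov e_2(I)=\lm(R/\ov I)$ as if it were known, but Blancafort at $n=1$ only yields the \emph{inequality} $\ov e_0-\ov e_1+\ov e_2\geq\lm(R/\ov I)$, since $\lm([H^2_N(\ov{\mathcal R}(I))]_1)\geq 0$ and Theorem~\ref{s2} gives vanishing only in degrees $\leq 0$, not at $n=1$. Without the reverse inequality your derivation of $\lm(\ov I/I)=\ov e_1(I)$ fails, and the next line ``$0=\ov e_2(I)\geq \ov e_1(I)-\lm(\ov I/I)=0$'' becomes circular. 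The missing input is exactly Theorem~\ref{s5} (or equivalently Corollary~\ref{s4}), which supplies $\ov e_1(I)-\lm(\ov I/I)\geq 0$; once inserted, the two inequalities squeeze and your argument goes through.

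More to the point, the induction on $d$ and the descent through Theorem~\ref{s1} are unnecessary detours. Theorems~\ref{s3} and~\ref{s5} already hold in every dimension $d\geq 2$, so the paper's proof runs directly: pass to a minimal reduction $J$ of $I$ and chain the inequalities
\[
0=\ov e_2(J)\ \geq\ \ov e_1(J)-\lm(\ov J/J)\ \geq\ \lm(\ov{J^2}/J\ov J)\ \geq\ 0,
\]
forcing $\ov e_1(I)=\lm(\ov I/J)$, whence Corollary~\ref{s4} gives $\ov r(I)\leq 1$ at once. This bypasses the Blancafort computation, the passage to $C=R(T)/(\sum x_iT_i)$, and the lifting step you yourself flagged as delicate (your citation of \cite[Proposition~17]{i2} covers the case $\ov r\leq 2$, not $\ov r\leq 1$, so that step would need its own justification anyway).
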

\begin{proof}
Let $\ov{e}_2(I)=0$. Let $J$ be a minimal reduction of $I.$  
By Theorem  \ref{s3} and Theorem \ref{s5}, 
$$0=\ov{e}_2(J) \geq \ov{e}_1(J)-\lambda(\ov{J}/J) \geq
\lambda(\ov{J^2}/J\ov{J}).$$ 
Therefore $\lm(\ov{J^2}/J\ov{J})=0=\ov{e}_1(J)-\lm(\ov{J}/J)$. Hence
$\ov{e}_1(I)=\lm(\ov{I}/J)$. By Theorems \ref{s4}, $\ov{r}
(I) \leq 1$. Conversely let $\ov{r}(I) \leq 1$. We may assume that $I$ is a
parameter ideal. Then by Theorem \ref{s7},
$\ov{e}_2(I)=\ov{e}_1(I)-\lm(\ov{I}/I)$. By Theorem 
\ref{s4}, $\ov{e}_1(I)-\lm(\ov{I}/I)=0$. So $\ov{e}_2(I)=0$.

\end{proof}
\noindent Next proposition gives a necessary and sufficient condition for
$\ov{e}_2(I)=1$.
\begin{proposition}\cite[Theorem 9]{i1}
 Let $(R,\m)$ be an analytically unramified Cohen Macaulay local ring of
dimension $d$. Let $I$ be a parameter ideal. Then
$\ov{e}_1(I)=\ov{e}_0(I)-\lm(R/\ov{I})+1$ if and 
only if $\ov{e}_2(I)=1.$
\noindent In this case, $\ov{r}(I) = 2$ and 
$$\ov F_I(t)=\dfrac{\lm(R/\ov I)+[\ov e_0(I)-\lm(R/\ov I)+1]t+t^2}{(1-t)^d}.$$
\end{proposition}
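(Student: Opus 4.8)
The plan is to reduce everything to the two-dimensional case and then read off the statement from Blancafort's formula (Theorem \ref{blanc}) together with Itoh's vanishing results (Theorem \ref{s2}), exactly in the spirit of the proofs of Theorems \ref{s3} and \ref{s7}. First I would establish the equivalence $\ov e_1(I)=\ov e_0(I)-\lm(R/\ov I)+1 \Leftrightarrow \ov e_2(I)=1$ by relating both sides to $\ov P_I(1)$. Note that $\ov P_I(1)-\ov H_I(1)=\ov e_0(I)-\ov e_1(I)+\ov e_2(I)-\lm(R/\ov I)$ after expanding the binomial coefficients and using $\ov H_I(1)=\lm(R/\ov I)$; since $I$ is a parameter ideal, $\ov e_0(I)=\lm(R/I)$. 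When $d=2$, Theorems \ref{blanc} and \ref{s2} give $\ov P_I(1)-\ov H_I(1)=\lm([H^2_N(\ov{\mathcal R}(I))]_1)\geq 0$, so the condition $\ov e_1(I)=\ov e_0(I)-\lm(R/\ov I)+1$ is equivalent to $\lm([H^2_N(\ov{\mathcal R}(I))]_1)=1$. To convert this into $\ov e_2(I)=1$, I would combine it with Itoh's lower bound $\ov e_2(I)\geq \ov e_1(I)-\lm(\ov I/I)$ (Theorem \ref{s3}) and the bound $\ov e_1(I)-\lm(\ov I/I)\geq \lm(\ov{I^2}/I\ov I)$ coming from Huckaba-Marley plus Huneke-Itoh (as in the proof of Theorem \ref{s5}); chasing these inequalities under the hypothesis should pin $\ov e_2(I)$ to exactly $1$, and conversely.

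For the dimension induction I would pass, as in Theorems \ref{s3} and \ref{s7}, to the ring $C=R(T)/(\sum_i x_iT_i)$ and the ideal $J=IC$ of Theorem \ref{s1}. By parts (1), (2) and (5) of that theorem we have $\ov{J^n}\cap R=\ov{I^n}$, $\ov J=\ov I C$, and $\ov e_i(I)=\ov e_i(J)$ for $i=0,1,\dots,d-1$. The coefficient $\ov e_2$ is preserved once $d\geq 3$, and $\lm(R/\ov I)=\lm(C/\ov J)$, so the numerical equivalence descends and ascends between $R$ and $C$. Thus it is enough to prove the whole statement for $d=2$, which the previous paragraph handles.

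Once $\ov e_2(I)=1$ is known, the reduction number claim $\ov r(I)=2$ and the Hilbert series formula should follow from Theorem \ref{redn} and the Valabrega-Valla/Huckaba-Marley machinery used in Theorem \ref{s7}: in dimension two the vanishing $\lm([H^2_N(\ov{\mathcal R}(I))]_1)=1$ together with \cite[Lemma 4.3.5]{b1} forces $\lm([H^2_N(\ov{\mathcal R}(I))]_n)=0$ for $n\geq 2$, giving $\ov n(I)\leq 1$, hence $\ov r(I)\leq 2$ by Theorem \ref{redn}; and $\ov r(I)\leq 1$ is excluded because it would give $\ov e_2(I)=0$ by Theorem \ref{s8}. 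This shows $\ov r(I)=2$, whence $\ov G(I)$ is Cohen-Macaulay and the computation of $H(\ov G(I),t)$ proceeds verbatim as in the proof of Theorem \ref{s7}, now with $\lm(\ov{I^2}/I\ov I)=\ov e_2(I)=1$, yielding the stated series $\ov F_I(t)=(\lm(R/\ov I)+[\ov e_0(I)-\lm(R/\ov I)+1]t+t^2)/(1-t)^d$.

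The main obstacle I anticipate is the first step in dimension two: converting the single-degree local cohomology condition $\lm([H^2_N(\ov{\mathcal R}(I))]_1)=1$ cleanly into the equality $\ov e_2(I)=1$ (and back), since $\ov e_2(I)=\sum_{n\geq 0}\lm([H^2_N(\ov{\mathcal R}(I))]_n)$ involves all degrees at once. The resolution is to control the higher-degree components via the chain of Huckaba-Marley inequalities — the hypothesis forces $\ov r(I)\leq 2$, which makes $\ov{I^n}\subseteq I$ for $n\geq 3$ and (via Theorem \ref{s7}) gives the exact value $\ov e_2(I)=\lm(\ov{I^2}/I\ov I)$; the arithmetic then shows this length is $1$ precisely under the stated condition on $\ov e_1(I)$.
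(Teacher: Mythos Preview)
Your overall strategy (reduce via Theorem~\ref{s1}, and in dimension two use Huckaba--Marley together with Theorems~\ref{s3}, \ref{s5}, \ref{s7}) is exactly the paper's, and your treatment of the converse and of the Hilbert series is fine. But the local cohomology computations you put at the center of the forward direction are miscalculated, and this leaves a real gap.

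First, in $d=2$ one has $\ov P_I(1)-\ov H_I(1)=\ov e_0(I)-\ov e_1(I)+\ov e_2(I)-\lm(R/\ov I)=\lm([H^2_N(\ov{\mathcal R}(I))]_1)$. Under the hypothesis $\ov e_1(I)=\ov e_0(I)-\lm(R/\ov I)+1$ this becomes $\ov e_2(I)-1$, not $1$; so the hypothesis is \emph{not} equivalent to $\lm([H^2_N]_1)=1$. Also, $\ov e_2(I)=\lm([H^2_N(\ov{\mathcal R}(I))]_0)$, not $\sum_{n\ge 0}\lm([H^2_N]_n)$. Finally, \cite[Lemma~4.3.5]{b1} propagates \emph{vanishing} upward in degree; it says nothing from $\lm([H^2_N]_1)=1$, so your conclusion $\lm([H^2_N]_n)=0$ for $n\ge 2$ and hence $\ov n(I)\le 1$ is unjustified.

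The effect is that you never obtain the upper bound $\ov e_2(I)\le 1$; your inequalities (Theorems~\ref{s3} and \ref{s5}) only yield $\ov e_2(I)\ge \ov e_1(I)-\lm(\ov I/I)=1$. The paper closes this gap without local cohomology: from Huckaba--Marley, the hypothesis $\ov e_1(I)=\lm(\ov I/I)+1$ forces $\sum_{n\ge 2}\lm((\ov{I^n}+I)/I)\le 1$. The value $0$ is ruled out since it would give $\ov{I^2}=I\cap\ov{I^2}=I\ov I$ (Huneke--Itoh), whence $\ov e_2(I)=0$, contradicting $\ov e_2(I)\ge 1$. Hence the sum is $1$, equality holds in Theorem~\ref{HM}, $\ov G(I)$ is Cohen--Macaulay, $\ov{I^n}\subseteq I$ for $n\ge 3$, and Valabrega--Valla gives $\ov r(I)\le 2$; Theorem~\ref{s7} then yields $\ov e_2(I)=\ov e_1(I)-\lm(\ov I/I)=1$. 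Replace your cohomological detour by this argument and the rest of your plan goes through.
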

\begin{proof} Let $\ov{e}_1(I)=\ov{e}_0(I)-\lm(R/\ov{I})+1$. We use induction on
$d$ to prove that $\ov{e}_2(I)=1$. Let $d=2$. 
Let $J \subseteq I$ be a minimal reduction of $I$. Then by Huckaba-Marley
Theorem,
$$ \sum_{n \geq 2}\lm(\ov{J^n}/J \cap \ov{J^n}) \leq
\ov{e}_1(J)-\lm(\ov{J}/J).$$
Therefore $ \sum_{n \geq 2} \lm(\ov{J^n}/J \cap \ov{J^n}) \leq 1$. Suppose
$\sum_{n\geq 2} \lm(\ov{J^n}/J \cap \ov{J^n})=0$. 
Then $\ov{J^2}=J \cap \ov{J^2}=J\ov{J}$, by Huneke-Itoh intersection theorem. By
\cite[Theorem 3.26]{m} $\ov{e}_2(J)=0$. 
But Theorem \ref{s3} implies that $$\ov{e}_2(J)\geq
\ov{e}_1(J)-\lm(\ov{J}/J)=\ov{e}_1(I)-\ov{e}_0(I)+\lm(R/\ov{I})=1,$$ a
contradiction. 
Hence $\sum_{n \geq 2}\lm(\ov{J^n}/J \cap \ov{J^n})=1$. Therefore by Theorem
\ref{HM}, $\ov{G}(I)$ 
is Cohen-Macaulay. Similar argument as above shows that $\ov{J^2} \neq J\ov{J}$.
Therefore $\ov{J^n}=J \cap \ov{J^n}$ for all $n \geq 
3$. Hence by Valabrega-Valla theorem, $\ov{J^n}=J \ov{J^{n-1}}$ for all $n \geq
3$. Therefore Theorem \ref{s7} implies that 
$\ov{e}_2(J)=\ov{e}_1(J)-\lm(\ov{J}/J)=1$. Hence $\ov{e}_2(I)=1$. Let $d >2$. We
may assume that $I$ is a parameter ideal. Let $C$ and $J$ be as in theorem
\ref{s1}. Hence 
$\ov{e}_1(J)=\ov{e}_0(J)-\lm(C/\ov{J})+1$. Therefore by induction,
$\ov{e}_2(J)=1$. Hence $\ov{e}_2(I)=1$.

Conversely, let $\ov{e}_2(I)=1$. Let $J$ be a minimal reduction of $I$. 
Then by Theorem \ref{s3} and \ref{s5}, $$1=\ov{e}_2(J)\geq
\ov{e}_1(J)-\lm(\ov{J}/J) \geq 0.$$ Suppose 
$\ov{e}_1(J)-\lm(\ov{J}/J)=0$. Then Huckaba-Marley Theorem implies that
$\ov{e}_1(J)=\sum_{n\geq 1} \lm(\ov{J^n}/J \cap \ov{J^n})$ and $\ov{G}(I)$ is
Cohen-Macaulay. Therefore 
$\ov{J^n}=J \cap \ov{J^n}$ for all $n \geq 2$. By Valabrega-Valla theorem,
$\ov{J^n}=J \ov{J^{n-1}}$ for all $n\geq 2$. Hence 
$\ov{r}(J) \leq 1$. By Theorem \ref{s8}, $\ov{e}_2(J)=0$, a contradiction.
Therefore $\ov{e}_1(J)-\lm(\ov{J}/J)=1$ and hence
$\ov{e}_1(I)=\ov{e}_0(I)-\lm(R/\ov{I})+1$.

\end{proof}

\noindent As a consequence of Theorem \ref{s7} we get similar result for normal
ideals in \cite[Theorem 3.12]{cpr}.
However the following example in \cite{cpr} shows that 
$\lm(R/I)=e_0(I)-e_1(I)+e_2(I)$ is not a sufficient condition to guarantee that
$G(I)$ is Cohen-Macaulay.

\begin{example}\cite[Example 3.10]{cpr}\label{neg}
 Let $(R,\m)$ be the 2-dimensional local Cohen-Macaulay ring
 $$k[|x,y,z,u,v|]/(z^2,zu,zv,uv,yz-u^3,xz-v^3)$$ with $k$ a field and
$x,y,z,u,v$ indeterminate. One can see that the depth $G(\m)=0$ and
 $$F_{\m}(t)=\dfrac{1+3t+3t^3-t^4}{(1-t)^2}.$$
 In this case $e_2(\m)=e_1(\m)-e_0(\m)+1$ but $G(\m)$ is not Cohen-Macaulay.

\end{example}

 \section{Study of $\ov e_3(I)$ }
\noindent So far we have seen that $e_1(\mathcal I)$ and $e_2(\mathcal I)$ are
non-negative 
for any $I$-admissible filtration in a Cohen-Macaulay local ring. But the
non-negativity does not hold true for 
$e_3(\mathcal I)$. Note that $e_3(\m)=-1$ in Example \ref{neg}.
 Marley in \cite{m} has given an another example in which $e_3(I)$ is
negative.
 \begin{example}
  Let $R=k[|x,y,z|]$ where $k$ is a field. 
Then the Hilbert polynomial $P_I(x)$ of the ideal   
  $$I=(x^3,y^3,z^3,x^2y,xy^2,yz^2,xyz)$$
  is given by
  $$P_I(x)=27{x+2\choose 3}-18{x+1\choose 2}+4x+1.$$
  Hence $e_3(I)=-1$.
 \end{example}
\noindent However Itoh has proved that $\ov e_3(I)$ is non-negative in a
Cohen-Macaulay analytically unramified
local ring. He has  proposed a conjecture for  the vanishing of $\ov e_3(I)$ in
\cite{i2}. 

\noindent
{\bf Itoh's Conjecture:}
 Let $(R,\m)$ be a analytically unramified Gorenstein local ring of dimension
$d\geq 3$. Then $\ov e_3(I)=0$ if and only if $\ov r(I) \leq 2.$
 
\noindent Itoh has given a solution to the conjecture when $\ov I=\m.$ In order
to prove this first we recall some preliminary results.

\begin{proposition} \cite[Proposition 10]{i3}\label{ipro1}
 Let $(R,\m)$ be a $d$-dimensional analytically unramified Cohen-Macaulay local
ring and let $I$ be a parameter ideal. Then for all $n\geq 0$,
 \begin{eqnarray*}
\lm(R/\ov{I^{n+1}}) 
&\leq & \lm(R/I){n+d\choose d}-[\lm(\ov I/I)+\lm(\ov{I^2}/I\ov I)]{n+d-1\choose
d-1}\\ &+&\lm(\ov{I^2}/I\ov I){n+d-2\choose d-2}.
\end{eqnarray*}
Moreover, equality holds for all $n \geq 1$ if and only if $\ov r(I) \leq 2.$ 
In particular if $\ov r(I) \leq 2$ then $\ov e_i(I)=0$ for $i\geq 3$.
\end{proposition}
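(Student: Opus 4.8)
The plan is to bound $\lm(R/\ov{I^{n+1}})$ from above by sandwiching $\ov{I^{n+1}}$ between explicit ideals and then computing lengths one graded layer at a time, the only nonformal input being the Huneke--Itoh intersection theorem (Theorem~\ref{ithm2}). Since $R$ is Cohen--Macaulay and $I=(x_1,\dots,x_d)$ is a parameter ideal, $x_1,\dots,x_d$ is a regular sequence, so $G(I)\cong (R/I)[X_1,\dots,X_d]$; hence $\lm(R/I^{m})=\lm(R/I)\binom{m+d-1}{d}$, and---what I will use repeatedly---if $\sum_{|\gamma|=m}x^{\gamma}c_{\gamma}\in I^{m+1}$ with $c_\gamma\in R$, then $c_{\gamma}\in I$ for every $\gamma$, because the degree-$m$ monomials in the $x_i$ form an $R/I$-basis of $I^{m}/I^{m+1}$.

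First I would record the chain, valid for $n\ge 1$,
$$I^{n+1}\ \subseteq\ I^{n}\ov I\ \subseteq\ I^{n-1}\ov{I^{2}}\ \subseteq\ \ov{I^{n+1}},$$
where the middle inclusion is $I^{n-1}(I\ov I)\subseteq I^{n-1}\ov{I^{2}}$ and the last is $I^{n-1}\ov{I^{2}}\subseteq\ov{I^{n-1}}\,\ov{I^{2}}\subseteq\ov{I^{n+1}}$. So it suffices to bound $\lm(R/I^{n-1}\ov{I^{2}})$, which I would do via the two successive factors $I^{n}\ov I/I^{n+1}$ and $I^{n-1}\ov{I^{2}}/I^{n}\ov I$. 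For the first, the $R$-linear surjection $\bigoplus_{|\gamma|=n}\ov I\to I^{n}\ov I/I^{n+1}$, $(b_{\gamma})\mapsto\ov{\sum x^{\gamma}b_{\gamma}}$, has kernel exactly $\bigoplus_{\gamma} I$: a tuple is in the kernel iff $\sum x^{\gamma}b_{\gamma}\in I^{n+1}=\sum x^{\gamma}I$, i.e. $\sum x^{\gamma}(b_{\gamma}-a_{\gamma})=0$ with $a_{\gamma}\in I$, and the freeness above forces $b_{\gamma}-a_\gamma\in I$, hence $b_\gamma\in I$. Thus $\lm(I^{n}\ov I/I^{n+1})=\binom{n+d-1}{d-1}\lm(\ov I/I)$.

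The crux is the second factor, and I expect this to be the main obstacle. Using $I^{n}\ov I=I^{n-1}(I\ov I)$ and the general fact $I\ov I\subseteq\ov{I}\,\ov{I}\subseteq\ov{I^{2}}$, there is an $R$-linear surjection $\bigoplus_{|\beta|=n-1}\ov{I^{2}}\to I^{n-1}\ov{I^{2}}/I^{n}\ov I$, $(c_{\beta})\mapsto\ov{\sum x^{\beta}c_{\beta}}$. Reading this surjection naively only gives $\lm(I^{n-1}\ov{I^{2}}/I^{n}\ov I)\le\binom{n+d-2}{d-1}\lm(\ov{I^{2}}/I\ov I)$, which is the wrong direction for bounding $\lm(R/\ov{I^{n+1}})$ from above; the key point is that this map is an \emph{isomorphism}. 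Indeed, if $\sum x^{\beta}c_{\beta}\in I^{n}\ov I=\sum_{|\beta|=n-1}x^{\beta}(I\ov I)$, then $\sum x^{\beta}(c_{\beta}-g_{\beta})=0$ for some $g_{\beta}\in I\ov I$, so freeness gives $c_{\beta}\in I\ov I+I$; since also $c_{\beta}\in\ov{I^{2}}$, the modular law (applicable because $I\ov I\subseteq\ov{I^{2}}$) yields $c_{\beta}\in\ov{I^{2}}\cap(I\ov I+I)=I\ov I+(\ov{I^{2}}\cap I)$, and the Huneke--Itoh intersection theorem $\ov{I^{2}}\cap I=I\ov I$ (Theorem~\ref{ithm2} with $n=1$) collapses this to $c_{\beta}\in I\ov I$. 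Hence the kernel is $\bigoplus_{\beta}I\ov I$ and $\lm(I^{n-1}\ov{I^{2}}/I^{n}\ov I)=\binom{n+d-2}{d-1}\lm(\ov{I^{2}}/I\ov I)$.

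Assembling, $\lm(R/I^{n-1}\ov{I^{2}})=\lm(R/I)\binom{n+d}{d}-\binom{n+d-1}{d-1}\lm(\ov I/I)-\binom{n+d-2}{d-1}\lm(\ov{I^{2}}/I\ov I)$, and Pascal's identity $\binom{n+d-1}{d-1}=\binom{n+d-2}{d-1}+\binom{n+d-2}{d-2}$ rewrites this as the claimed right-hand side; together with $\lm(R/\ov{I^{n+1}})\le\lm(R/I^{n-1}\ov{I^{2}})$ this is the inequality for $n\ge 1$, while $n=0$ is just the identity $\lm(R/\ov I)=\lm(R/I)-\lm(\ov I/I)$. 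Equality for all $n\ge 1$ forces $I^{n-1}\ov{I^{2}}=\ov{I^{n+1}}$ for all $n\ge 1$ (equal colengths), i.e. $\ov{I^{n+2}}=I^{n}\ov{I^{2}}$ for all $n\ge 1$, which by Lemma~\ref{itl} is precisely $\ov r(I)\le 2$; and $\ov r(I)\le 2$ conversely gives all those equalities. In that case the equality identifies $\ov P_I(x)$ with $\lm(R/I)\binom{x+d-1}{d}-[\lm(\ov I/I)+\lm(\ov{I^{2}}/I\ov I)]\binom{x+d-2}{d-1}+\lm(\ov{I^{2}}/I\ov I)\binom{x+d-3}{d-2}$, a polynomial whose coefficients in the binomial basis $\binom{x+d-1}{d},\binom{x+d-2}{d-1},\dots$ vanish past the $(d-2)$-nd one, so $\ov e_i(I)=0$ for $i\ge 3$ (and one reads off $\ov e_1(I)=\lm(\ov I/I)+\lm(\ov{I^{2}}/I\ov I)$, $\ov e_2(I)=\lm(\ov{I^{2}}/I\ov I)$, consistent with Theorems~\ref{s5} and~\ref{s7}).
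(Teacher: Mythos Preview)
Your proof is correct and follows essentially the same route as the paper: both exploit the chain $I^{n+1}\subseteq I^{n}\ov I\subseteq I^{n-1}\ov{I^{2}}\subseteq\ov{I^{n+1}}$, compute $\lm(I^{n}\ov I/I^{n+1})$ and $\lm(I^{n-1}\ov{I^{2}}/I^{n}\ov I)$ using freeness of $I^{m}/I^{m+1}$ over $R/I$, and appeal to the Huneke--Itoh intersection $\ov{I^{2}}\cap I=I\ov I$ at the crucial step. The only organizational difference is that the paper first passes to $(I^{n-1}\ov{I^{2}}+I^{n})/I^{n}$ via $I^{n}\ov I=I^{n-1}\ov{I^{2}}\cap I^{n}$ and then identifies this with $I^{n-1}/I^{n}\otimes\ov{I^{2}}/I\ov I$, whereas you compute the kernel of $\bigoplus_{|\beta|=n-1}\ov{I^{2}}\to I^{n-1}\ov{I^{2}}/I^{n}\ov I$ directly; your modular-law step is harmless but unnecessary since $I\ov I+I=I$ already. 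You also spell out the equality case and the vanishing of $\ov e_i(I)$ for $i\ge 3$ via Lemma~\ref{itl}, which the paper's displayed proof leaves implicit.
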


\begin{proof}
 If $n=0$ then the inequality trivially holds. So assume $n\geq 1$. Since
$I^n\ov{I}\subseteq I^{n-1}\ov{I^2}\subseteq \ov{I^{n+1}}$ we have
$\lm(R/\ov{I^{n+1}})\leq 
\lm(R/I^n\ov I)-\lm(I^{n-1}\ov{I^2}/I^n\ov I)$. Since $I^n/I^{n+1}\otimes R/\ov
I\cong I^n/I^n\ov I $  and
 $I^n/I^{n+1}$ is a free $R/I$ module we get 
\begin{eqnarray*}
\lm(R/I^n\ov I)& =& \lm(R/I^n)+\lm(I^n/I^n\ov I)\\
&=& \lm(R/I){n-1+d\choose d}+\lm(R/\ov I){n+d-1\choose d-1}\\
&=&\lm(R/I){n+d\choose d}-\lm(\ov I/I){n+d-1\choose d-1}.
\end{eqnarray*}
Therefore it is sufficient to prove that $\lm(I^{n-1}\ov{I^2}/I^n\ov
I)=\lm(\ov{I^2}/I\ov I){n-1+d-1\choose d-1}$. Since $I^n\ov I\subseteq
I^{n-1}\ov{I^2}\cap I^n\subseteq
 \ov{I^{n+1}}\cap I^n=I^n\ov I$ (by Theorem \ref{ithm2}), we have $I^n\ov
I=I^{n-1}\ov{I^2}\cap I^n$ and hence $I^{n-1}\ov{I^2}/I^n\ov I\cong (I^{n-1}\ov
{I^2}+I^n)/I^n$. \\
Now we prove that the canonical morphism $$I^{n-1}/I^n\otimes \ov{I^2}/I\ov
I\longrightarrow (I^{n-1}\ov {I^2}+I^n)/I^n$$  is an isomorphism. Let
$x_1,\ldots ,x_d$ be a
regular sequence such that $I=(x_1,\ldots ,x_d)$ and let $\{M_j\}$  be the set
of  monomials in $x_1,\ldots ,x_d$ of degree $n-1$. It is sufficient to show
that if
 $\sum_j a_jM_j\in I^n$ with $a_j\in \ov {I^2}$ then $a_j\in I$ and hence
$a_j\in \ov{I^2}\cap I=I\ov I$. Suppose that $\sum_j a_jM_j=\sum_jb_jM_j$ with
$b_j\in I$. 
Since $x_1,\ldots ,x_d$ is a regular sequence we have $a_j-b_j\in I$ for each
$j$ and hence $a_j\in I$. Therefore $(I^{n-1}\ov {I^2}+I^n)/I^n \cong 
I^{n-1}/I^n\otimes \ov{I^2}/I\ov I$.
\end{proof}

\noindent Let $\underbar{x}=x_1,\ldots ,x_d$ be a minimal generators of $I$.
Then the  natural exact sequence of \v{C}ech-complexes,
$$0\longrightarrow C(\underbar x;R)[t,t^{-1}]\longrightarrow C(t^{-1},\underbar
{xt};\ov{\mathcal{R}}(I))\longrightarrow
C(\underbar{xt};\ov{\mathcal{R}}(I))\longrightarrow 0$$
gives an exact sequence
$$0\longrightarrow H^d_{\mathcal{M}}(\ov{\mathcal{R}}(I))\longrightarrow
H^d_{\mathcal{R_+}}(\ov{\mathcal{R}}(I))\longrightarrow
H^d_{\m}(R)[t,t^{-1}]\longrightarrow H^{d+1}_{\mathcal M}
(\ov{\mathcal{R}}(I))\longrightarrow 0.$$

\noindent Consider the the canonical graded 
homomorphism $$\alpha:H^d_{\mathcal{R_+}}(\ov{\mathcal{R}}(I))\longrightarrow
H^d_{\m}(R)[t,t^{-1}].$$
We denote by $\alpha_j$ the graded part of degree $j$ of $\alpha$. Then we have
\begin{lemma}{\rm \cite[Lemma 18]{i2}}\label{ilema}
 The map $\alpha_j=0$ if and only if for all $n \geq 0 ,$ $$
\ov{I^{n+d-1+j}}\subseteq I^n.$$
\end{lemma}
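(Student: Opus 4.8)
The plan is to compute $\alpha_j$ explicitly at the level of \v{C}ech cocycles. Since $R$ is analytically unramified and Cohen--Macaulay and $I=(x_1,\ldots,x_d)$ is a parameter ideal, Rees's theorem gives that $\ov{\mathcal{R}}(I)$ is a finite $\mathcal{R}(I)$-module, and $\mathcal{R}_+\ov{\mathcal{R}}(I)$ has the same radical as $(x_1t,\ldots,x_dt)\ov{\mathcal{R}}(I)$, so $H^d_{\mathcal{R}_+}(\ov{\mathcal{R}}(I))$ is the top cohomology of $C(\underline{xt};\ov{\mathcal{R}}(I))$. The short exact sequence of \v{C}ech complexes displayed above arises by inverting $t^{-1}$, and since $\ov{\mathcal{R}}(I)_{t^{-1}}=R[t,t^{-1}]$, I would check --- by running the lift--differential--descend recipe for a connecting map on a top cocycle, which collapses because the only degree-$d$ summand of $C(t^{-1},\underline{xt};\ov{\mathcal R}(I))$ mapping onto $C^d(\underline{xt};\ov{\mathcal R}(I))$ is the one indexed by $\{x_1t,\ldots,x_dt\}$, whose outgoing differential into degree $d+1$ is precisely the localization map at $t^{-1}$ --- that $\alpha$ is simply the map
$$H^d_{(x_1t,\ldots,x_dt)}(\ov{\mathcal R}(I))\longrightarrow H^d_{(x_1t,\ldots,x_dt)}(R[t,t^{-1}])=H^d_{\m}(R)[t,t^{-1}]$$
induced functorially by the inclusion $\ov{\mathcal R}(I)\hookrightarrow R[t,t^{-1}]$.

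The next step is to read off the graded pieces from the direct-limit description of top local cohomology. Writing $H^d_{(x_1t,\ldots,x_dt)}(M)=\varinjlim_{\ell}M/(x_1^{\ell}t^{\ell},\ldots,x_d^{\ell}t^{\ell})M$ with transition maps given by multiplication by $x_1\cdots x_d\,t^d$, the $t$-degree-$j$ component is $\varinjlim_{\ell}\ov{I^{j+\ell d}}/(x_1^{\ell},\ldots,x_d^{\ell})\ov{I^{j+\ell(d-1)}}$ for $M=\ov{\mathcal{R}}(I)$ and $\varinjlim_{\ell}R/(x_1^{\ell},\ldots,x_d^{\ell})=H^d_{\m}(R)$ for $M=R[t,t^{-1}]$, and $\alpha_j$ is, at level $\ell$, the natural map induced by $\ov{I^{j+\ell d}}\hookrightarrow R$. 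Because $x_1,\ldots,x_d$ is a regular sequence, the colon formula $(x_1^{\ell'},\ldots,x_d^{\ell'}):(x_1\cdots x_d)^{\ell'-\ell}=(x_1^{\ell},\ldots,x_d^{\ell})$ shows the transition maps $R/(x_1^{\ell},\ldots,x_d^{\ell})\to R/(x_1^{\ell'},\ldots,x_d^{\ell'})$ are injective, hence $R/(x_1^{\ell},\ldots,x_d^{\ell})\hookrightarrow H^d_{\m}(R)$. Therefore $\alpha_j=0$ if and only if for every $\ell\ge 1$ the image of $\ov{I^{j+\ell d}}$ in $R/(x_1^{\ell},\ldots,x_d^{\ell})$ is zero, i.e. $\ov{I^{j+\ell d}}\subseteq(x_1^{\ell},\ldots,x_d^{\ell})$.

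It then remains to show this family of containments is equivalent to $\ov{I^{n+d-1+j}}\subseteq I^{n}$ for all $n\ge 0$ (the cases with $d+j\le 0$ being trivially consistent on both sides). For the implication $(\Leftarrow)$ I would substitute $n=d(\ell-1)+1$ and use the elementary fact that $I^{d(\ell-1)+1}\subseteq(x_1^{\ell},\ldots,x_d^{\ell})$ --- every monomial in $x_1,\ldots,x_d$ of degree $d(\ell-1)+1$ has some exponent $\ge\ell$. For $(\Rightarrow)$, I would induct on $n$: the case $n=0$ is trivial, and from $\ov{I^{n+d-2+j}}\subseteq I^{n-1}$ one gets $\ov{I^{n+d-1+j}}\subseteq I^{n-1}\cap\ov{I^{(n-1)+(d+j)}}$, which by the Huneke--Itoh intersection theorem in its iterated form $I^{m}\cap\ov{I^{m+k}}=I^{m}\ov{I^{k}}$ (valid for ideals generated by a regular sequence) equals $I^{n-1}\ov{I^{d+j}}$; since the $\ell=1$ containment is exactly $\ov{I^{d+j}}\subseteq(x_1,\ldots,x_d)=I$, this is contained in $I^{n-1}\cdot I=I^{n}$.

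The main obstacle is the first step, the identification of $\alpha_j$: one has to be careful about the internal $\mathbb{Z}$-grading on the localized \v{C}ech modules and the resulting degree shifts, about the direction and sign of the connecting homomorphism, and about the radical computations that ensure the three modules in the displayed exact sequence are the expected local cohomology modules. Once $\alpha_j$ is pinned down, the monomial inclusion and the regular-sequence colon formula are routine, and the only substantive input in the last step is the iterated Huneke--Itoh intersection theorem.
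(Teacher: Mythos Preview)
Your identification of $\alpha_j$ via the inclusion $\ov{\mathcal R}(I)\hookrightarrow R[t,t^{-1}]$ and the resulting diagonal criterion --- $\alpha_j=0$ if and only if $\ov{I^{j+\ell d}}\subseteq(x_1^{\ell},\ldots,x_d^{\ell})$ for every $\ell\ge1$ --- are correct, and your treatment of the implication $(\Leftarrow)$ agrees with the paper's. The genuine gap is in $(\Rightarrow)$: the ``iterated'' Huneke--Itoh formula
\[
I^{m}\cap\ov{I^{m+k}}=I^{m}\,\ov{I^{k}}
\]
that you invoke is \emph{false} in general for parameter ideals in analytically unramified Cohen--Macaulay local rings. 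Specializing to $m=1$ it reads $I\cap\ov{I^{k+1}}=I\,\ov{I^{k}}$ for all $k$, which by the Valabrega--Valla criterion (Theorem~\ref{vv}, applied with $J=I$ and $I_n=\ov{I^n}$) is exactly the statement that $\ov G(I)$ is Cohen--Macaulay. This fails already in regular local rings of dimension three: if $J$ is a minimal reduction of the normal ideal $I\subset k[[x,y,z]]$ in the Huckaba--Huneke example quoted in this survey, then $\ov G(J)=G(I)$ is not Cohen--Macaulay, so $J\cap\ov{J^{k+1}}\ne J\,\ov{J^{k}}$ for some $k$. Your inductive step, which needs the formula with $m=n-1$ and $k=d+j\ge2$, therefore cannot be carried out.

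The paper's proof avoids this by \emph{not} reducing to diagonal exponents. It works directly with \v{C}ech cocycles $a\,t^{j}/\prod_i(x_it)^{c_i}$ for arbitrary $(c_1,\ldots,c_d)$ with $c_1+\cdots+c_d=n+d-1$: the hypothesis $\alpha_j=0$ forces $a\in(x_1^{c_1},\ldots,x_d^{c_d})$ for every such tuple, and the elementary decomposition
\[
I^{n}=\bigcap_{c_1+\cdots+c_d=n+d-1}(x_1^{c_1},\ldots,x_d^{c_d}),
\]
valid for any regular sequence, then yields $a\in I^{n}$. The freedom to use unequal exponents in the \v{C}ech denominator is precisely what replaces the intersection theorem you are missing; your direct-limit description, being cofinal along the diagonal, throws this information away.
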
 

\begin{proof}
 Let $x_1,\ldots,x_d$ be a system of minimal generators of $I$. We have a
natural morphism of \v{C}ech complexes and its 
cohomologies:
$$\diagram
[\prod_i \ov{\mathcal{R}}(I)_{x_1t \ldots (x_it)^{\wedge}\ldots x_dt}]_j \rto
\dto & [\ov{\mathcal{R}}(I)_{x_1t\ldots x_dt}]_j \rto \dto^{\beta_j}  &
[H^d_{\mathcal R_+}(\ov{\mathcal{R}}(I))]_j \rto \dto^{\alpha_j} & 0 &\\
\prod_i R_{x_1\ldots(x_i)^{\wedge} \ldots x_d} \rto^{f}  &  R_{x_1 \ldots x_d}
\rto &  H^d_{\m}(R)  \rto & 0 & 
\enddiagram$$
Let $\ov{I^{n+d-1+j}} \subseteq I^n$ for all $n \geq 0$. Let
$$z=at^jt^{nd}/(x_1t\ldots x_dt)^n \in 
[\ov{\mathcal{R}}(I)_{x_1t\ldots x_dt}]_j.$$ 
Therefore $a \in \ov{I^{nd+j}} \subseteq I^{nd+j-(d-1+j)}=I^{nd-d+1}$. Let
$x^c=x_1^{c_1}\ldots x_d^{c_d}$ for $(c_1,\ldots,c_d)\in \mathbb N^d$ and
$$a=\sum_{c_1+\cdots+c_d=nd+d-1}r_{c}{x}^{c}.$$ If $c_i < n$ for all $i$, then
$c_1+\cdots+c_d \leq d(n-1)<nd-d+1$. Therefore $c_i \geq n$ for some $i$. Hence 
$a \in (x_1^n,\cdots,x_d^n)$. Let $a=\displaystyle \sum_{i=1}^d r_ix_i^{n}$.
Then $$f\left(\sum_{i=1}^d (-1)^{i-1}r_i/(x_1\ldots \hat{x_i} \ldots
x_d)^n\right)= a/(x_1\ldots x_d)^n.$$ 
Thus $\beta_j(z) = a/(x_1 \cdots x_d)^n \in \im f$ and hence $\alpha_j \equiv
0$.
Conversely, let $\alpha_j \equiv 0$. Since 
$$I^n=\displaystyle \bigcap_{c_1+\cdots +c_d=n+d-1}(x_1^{c_1},\ldots,x_d^{c_d}),
$$
it is sufficient to show that $\ov{I^{n+d-1+j}}$ is contained in
$(x_1^{c_1},\ldots,x_d^{c_d})$ for all integers $c_i \geq 1$ with $c_1+\cdots+
c_d = n+d-1$. Let $a \in \ov{I^{n+d-1+j}}$. Then $z=at^j/(x_1^{c_1}\ldots
x_d^{c_d}) \in [\ov{\mathcal
{R}}(I)_{x_1t\cdots x_dt}]_j
$. Therefore $\beta_j(z) \in \im f$. Let $$\beta_j(z)=a/(x_1^{c_1}\ldots
x_d^{c_d})=\sum (a_ix_i^s)/(x_1\ldots x_d)^s$$ 
for some $a_i \in R$ and $s \geq $ max$\{
c_i \mid i=1,\ldots,d\}.$ Since  $ax_1^{s-c_1}\cdots x_d^{s-c_d}=\sum a_ix_i^s$
we get $a \in (x_1^{c_1},\cdots,x_d^{c_d}).$ 
This proves the assertion. 
\end{proof}

\begin{proposition}{\rm \cite[Proposition 19]{i2}}{\label{ipro2}}
 Let $(R,\m)$ be a Gorenstein, analytically unramified local ring of positive
dimension. Let $I$ be a parameter ideal such that $\ov {I^2}\not=I\ov I$. If
$\ov{I^{n+2}}$ and $\m\ov{I^{n+1}}$ are 
contained in $I^n$ for every $n\geq 0$, then $\lm(\ov{ I^2}/I\ov I)=1$
and $\ov r(I)\leq 2$.
\end{proposition}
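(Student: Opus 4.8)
The plan is to extract information from the two hypotheses $\ov{I^{n+2}}\subseteq I^n$ and $\m\ov{I^{n+1}}\subseteq I^n$ by feeding them into Lemma \ref{ilema} and Proposition \ref{ipro1}. First I would interpret $\ov{I^{n+2}}\subseteq I^n$ for all $n\geq 0$ in the language of Lemma \ref{ilema}: writing $n+2 = n+d-1+j$ means $j = 3-d$, so the hypothesis says $\alpha_{3-d}=0$, and more generally $\alpha_j = 0$ for all $j \leq 3-d$ since the containments $\ov{I^{m}}\subseteq I^n$ only get easier as $m$ decreases relative to $n$. In particular, by the exact sequence preceding Lemma \ref{ilema}, vanishing of $\alpha_j$ in these degrees forces $[H^d_{\mathcal M}(\ov{\mathcal R}(I))]_j \cong [H^d_{\mathcal R_+}(\ov{\mathcal R}(I))]_j$ in the relevant degrees, and $[H^{d+1}_{\mathcal M}(\ov{\mathcal R}(I))]_j \cong [H^d_{\m}(R)]_j$ there. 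This is where the Gorenstein hypothesis enters: $\ov{\mathcal R}(I)$ (or rather its relevant Veronese/completion) has a canonical module with predictable graded structure, so that graded local duality converts the vanishing/one-dimensionality statements about $H^d_{\mathcal R_+}$ into the desired statement $\lm(\ov{I^2}/I\ov I) = 1$.

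Concretely, I would first establish that $\lm(\ov{I^2}/I\ov I)=1$. We know it is nonzero by hypothesis ($\ov{I^2}\neq I\ov I$). To get the upper bound, I would use Proposition \ref{ipro1}: the inequality there, combined with the extra hypothesis that $\m\ov{I^{n+1}}\subseteq I^n$ (which lets one replace $\ov I$ by $\m$ in the relevant intermediate term, tightening the estimate), should force the coefficient $\lm(\ov{I^2}/I\ov I)$ appearing in the polynomial bound to equal $1$ — otherwise the length $\lm(R/\ov{I^{n+1}})$ would be forced below a value it cannot attain, or one would contradict the Gorenstein symmetry of the $h$-vector of $\ov G(I)$. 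Alternatively, and perhaps more cleanly, once $\alpha_{3-d}=0$ one gets via the duality that the top nonzero graded piece of the canonical module of $\ov G(I)$ sits in a single degree with length $1$, which is exactly $\lm(\ov{I^2}/I\ov I)=1$; the Gorenstein hypothesis on $R$ is what makes the socle one-dimensional.

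Having secured $\lm(\ov{I^2}/I\ov I)=1$, the second assertion $\ov r(I)\leq 2$ should follow from the ``moreover'' clause of Proposition \ref{ipro1}: I would check that the hypotheses now force equality $\lm(R/\ov{I^{n+1}}) = \lm(R/I)\binom{n+d}{d} - [\lm(\ov I/I)+1]\binom{n+d-1}{d-1} + \binom{n+d-2}{d-2}$ for all $n\geq 1$ — the containments $\ov{I^{n+2}}\subseteq I^n$ and $\m\ov{I^{n+1}}\subseteq I^n$ being precisely what is needed to run the chain of inequalities in the proof of \ref{ipro1} as equalities. Then \ref{ipro1} directly yields $\ov r(I)\leq 2$. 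The main obstacle I anticipate is the bookkeeping in the duality step: correctly identifying the graded canonical module of $\ov{\mathcal R}(I)$ (which is not standard graded and may require passing to $\ov G(I)$ or completing), pinning down in which internal degree its socle lives, and matching that degree precisely to the module $\ov{I^2}/I\ov I$ rather than $\ov I/I$ or $\ov{I^3}/I\ov{I^2}$ — this is where the hypothesis $\m\ov{I^{n+1}}\subseteq I^n$ (as opposed to merely $\ov{I^{n+2}}\subseteq I^n$) must be used to kill the lower-degree contribution. Everything else is a matter of assembling Lemma \ref{ilema}, Proposition \ref{ipro1}, and Theorem \ref{ithm2} in the right order.
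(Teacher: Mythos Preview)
Your proposal circles the right ingredients (Proposition~\ref{ipro1}, a squeeze argument) but misses the concrete mechanism by which the Gorenstein hypothesis enters, and the duality route via Lemma~\ref{ilema} and the canonical module of $\ov{\mathcal R}(I)$ is a red herring here. The paper's proof is elementary and does not touch Lemma~\ref{ilema} at all. The key step you are missing is this: the hypothesis $\m\ov{I^{n+1}}\subseteq I^n$ says exactly that the image of $\ov{I^{n+1}}$ in $R/I^n$ lies in the socle $(I^n:\m)/I^n$. Since $R$ is Gorenstein and $I=(x_1,\dots,x_d)$ is a parameter ideal, each $(x_1^{c_1},\dots,x_d^{c_d})$ is irreducible, and $I^n=\bigcap_{c_1+\dots+c_d=n+d-1}(x_1^{c_1},\dots,x_d^{c_d})$ (all $c_i\geq 1$) is an irredundant irreducible decomposition; hence $\lm((I^n:\m)/I^n)=\binom{n+d-2}{d-1}$. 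Combined with Huneke--Itoh ($I^n\cap\ov{I^{n+1}}=I^n\ov I$), this gives $\lm(\ov{I^{n+1}}/I^n\ov I)\leq\binom{n+d-2}{d-1}$.

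Now write $\lm(R/\ov{I^{n+1}})=\lm(R/I^{n+1})-\lm(I^n\ov I/I^{n+1})-\lm(\ov{I^{n+1}}/I^n\ov I)$ and use freeness of $I^n/I^{n+1}$ over $R/I$ to get $\lm(I^n\ov I/I^{n+1})=\lm(\ov I/I)\binom{n+d-1}{d-1}$. This produces a \emph{lower} bound for $\lm(R/\ov{I^{n+1}})$ which, after Pascal's identity, is exactly the right-hand side of Proposition~\ref{ipro1} with $\lm(\ov{I^2}/I\ov I)$ replaced by $1$. Comparing with the upper bound of Proposition~\ref{ipro1} forces $\lm(\ov{I^2}/I\ov I)\leq 1$ (hence $=1$ by hypothesis) and equality in Proposition~\ref{ipro1} for all $n\geq 1$, whence $\ov r(I)\leq 2$. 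So the Gorenstein assumption is used for the socle of $R/I^n$, not for graded duality on a blowup algebra; Lemma~\ref{ilema} appears only in the subsequent theorem, where it is used to \emph{produce} the containment $\ov{I^{n+2}}\subseteq I^n$ from $\ov e_3(I)=0$, not in the proof of this proposition.
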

\begin{proof}
Let $I=(x_1,\ldots, x_d)$. Since $I^n \supseteq \m \ov{I^{n+1}}$, using
Huneke-Itoh Intersection Theorem $$(I^n:\m)/I^n \supseteq (\ov{I^{n+1}}+I^n)/I^n
= \ov{I^{n+1}}/I^n \cap \ov{I^{n+1}}=
\ov{I^{n+1}}/I^n\ov{I}.$$ Therefore $\lm(\ov{I^{n+1}}/I^n\ov{I}) \leq
\lm((I^n:\m)/I^n)$. As 
$R$ is Gorenstein,
$(x_1^{c_1},\ldots,x_d^{c_d})$ 
is an irreducible ideal. Hence
$$ I^n = \bigcap_{c_1+\cdots+c_d=n+d-1} (x_1^{c_1},\ldots,x_d^{c_d})$$
is an irredundant decomposition of $I^n$ as a product of irreducible ideals
where $c_1, c_2, \ldots, c_d \geq 1.$
The  dimension of $(I^n:\m)/I^n$ as $R/\m$-vector
space equals the number of irreducible 
components of $I^n$. 
Hence $\lm((I^n:\m)/I^n)= {n-1+d-1 \choose d-1}$. Therefore
$\lm(\ov{I^{n+1}}/I^n\ov{I})\leq {n-1+d-1 \choose d-1}.$ Now 
\begin{eqnarray*}
 \lm(R/\ov{I^{n+1}})&=& \lm(R/I^{n+1})-\lm(\ov{I^{n+1}}/I^{n+1})\\
&= &\lm(R/I^{n+1})-\lm(I^n\ov{I}/I^{n+1})-\lm(\ov{I^{n+1}}/I^n\ov{I}).
\end{eqnarray*}
But 
\begin{eqnarray*}
 \lm(I^n\ov{I}/I^{n+1})&=&\lm(R/I^{n+1})-\lm(R/I^n\ov{I})\\
&=& \lm(R/I^{n+1})-\lm(R/I^n)-\lm(I^n/I^n\ov{I})\\
&=& \lm(I^n/I^{n+1})-\lm(I^n/I^n\ov{I}). 
\end{eqnarray*}  
Since $I^n/I^{n+1} \otimes R/\ov{I} \cong I^n/I^n\ov{I},$ for all $n \geq 0,$
$$\lm(I^n/I^n\ov{I})=\lm(R/\ov{I}){n+d-1 \choose d-1}.$$  Hence 
$\lm(I^n\ov{I}/I^{n+1})=\lm(\ov{I}/I){n+d-1 \choose d-1}$. Therefore
\begin{eqnarray*}
 \lm(R/\ov{I^{n+1}})&\geq& \lm(R/I){n+d \choose d} -\lm(\ov{I}/I){n+d-1 \choose
d-1} -{n-1+d-1 \choose d-1}\\
&=& \lm(R/I){n+d \choose d} -[\lm(\ov{I}/I) +1]{n+d-1 \choose d-1}+{n+d-2
\choose d-2}
\end{eqnarray*}
By Proposition \ref{ipro1}, for all $n \geq 0,$
\begin{eqnarray*}
\lm(R/\ov{I^{n+1}}) &\leq& \lm(R/I) {n+d \choose
d}-[\lm(\ov{I}/I)+\lm(\ov{I^2}/I\ov{I})]{n+d-1 \choose d-1} \\ & +&
\lm(\ov{I^2}/I\ov{I}) {n+d-2 \choose d-2}. 
\end{eqnarray*}
Hence $-{n+d-2 \choose d-1} \leq - \lm(\ov{I^2}/I\ov{I}){n+d-2 \choose d-1}$.
Therefore $\lm(\ov{I^2}/I\ov{I})=1$ and equality 
holds in Proposition \ref{ipro1}. Hence $\ov r(I)\leq 2$.

\end{proof}

\begin{theorem} \cite[Theorem 3]{i2}
 Let $(R,\m)$ be a $d$-dimensional analytically unramified Cohen-Macaulay local 
ring and let $I$ be a parameter ideal. Suppose $d\geq 3$. Then \\

{\rm (1) } $\ov e_3(I)\geq 0$ and if $\ov e_3(I)=0$ then $\ov{I^{n+2}}\subseteq
I^n$ for every $n\geq 0$.\\
{\rm (2)} If $R$ is Gorenstein and $\ov I=\m$ then $\ov e_3(I)=0$ if and only if
$\ov{r}(I) \leq 2.$  
\end{theorem}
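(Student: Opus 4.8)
The plan is to prove both parts by induction on $d$, using Theorem \ref{s1} to descend to the three-dimensional case, where Blancafort's formula (Theorem \ref{blanc}), the vanishing results of Theorem \ref{s2}, and Lemma \ref{ilema} become available.

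\emph{Part (1): non-negativity.} For $d=3$, apply Blancafort's formula at $n=0$ with $N=It\mathcal R(I)$:
$$\ov P_I(0)-\ov H_I(0)=\sum_{i=0}^{3}(-1)^i\lm([H^i_N(\ov{\mathcal R}(I))]_0).$$
Here $\ov H_I(0)=0$ and $\ov P_I(0)=-\ov e_3(I)$, while Theorem \ref{s2} forces $[H^0_N]_0=[H^1_N]_0=0$ and $[H^2_N]_0=[H^2_M]_0=0$ (the last since $2\le d-1$ and $[H^2_M]_j=0$ for $j\le 0$). Hence $\ov e_3(I)=\lm([H^3_N(\ov{\mathcal R}(I))]_0)\ge 0$, a finite length by Theorem \ref{blanc}(1). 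For $d>3$ pass to $C=R(T)/(\sum_i x_iT_i)$ and $J=IC$ as in Theorem \ref{s1}; then $C$ is again a $(d-1)$-dimensional analytically unramified Cohen-Macaulay local ring and $\ov e_3(I)=\ov e_3(J)$ by Theorem \ref{s1}(5), so the inequality follows by induction.

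\emph{Part (1): the implication $\ov e_3(I)=0\Rightarrow\ov{I^{n+2}}\subseteq I^n\ (n\ge 0)$.} By Lemma \ref{ilema} with $j=3-d$ this is exactly $\alpha_{3-d}=0$. For $d=3$ it is immediate: $\ov e_3(I)=0$ forces $[H^3_N(\ov{\mathcal R}(I))]_0=0$ by the computation above, so the source of $\alpha_0$ already vanishes. For $d>3$ I would again descend via Theorem \ref{s1}: the inductive hypothesis applied to $J$ in $C$ gives $\ov{J^{n+2}}\subseteq J^n$ for all $n\ge 0$, and this transfers back to $\ov{I^{n+2}}\subseteq I^n$ using the contraction relation $\ov{J^{n}}\cap R=\ov{I^{n}}$ of Theorem \ref{s1}(1) together with Itoh's transfer result \cite[Proposition 17]{i2} (the device already used in the proof of Theorem \ref{s7}). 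Alternatively, working in fixed dimension $d$, one combines the exact sequence preceding Lemma \ref{ilema} with Blancafort's formula in degrees $n\le 0$ to bound $\ov e_3(I)$ below by a sum of non-negative lengths, one of which is $\lm(\im\alpha_{3-d})$, so that $\ov e_3(I)=0$ forces $\alpha_{3-d}=0$. This cohomological bookkeeping for $d>3$ is the step I expect to be the main obstacle.

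\emph{Part (2).} The implication $\ov r(I)\le 2\Rightarrow\ov e_3(I)=0$ is immediate from Proposition \ref{ipro1} (which gives $\ov e_i(I)=0$ for all $i\ge 3$ once $\ov r(I)\le 2$, using neither the Gorenstein hypothesis nor $\ov I=\m$). For the converse, assume $\ov e_3(I)=0$. By Part (1), $\ov{I^{n+2}}\subseteq I^n$ for all $n\ge 0$; since $\ov I=\m$ and the integral closure filtration is multiplicative, also $\m\,\ov{I^{n+1}}=\ov I\,\ov{I^{n+1}}\subseteq\ov{I^{n+2}}\subseteq I^n$ for all $n\ge 0$. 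If $\ov{I^2}\ne I\ov I$, Proposition \ref{ipro2} applies verbatim and yields $\ov r(I)\le 2$. In the remaining case $\ov{I^2}=I\ov I$, Theorem \ref{s5} reads $\ov e_1(I)\ge\lm(\ov I/I)$, with equality if and only if $\ov r(I)\le 2$; so it suffices to exclude $\ov e_1(I)>\lm(\ov I/I)$. If $\ov e_1(I)>\lm(\ov I/I)$, I would run the length estimate from the proof of Proposition \ref{ipro2}: from $\m\,\ov{I^{n+1}}\subseteq I^n$ and the Huneke-Itoh Intersection Theorem, $\lm(\ov{I^{n+1}}/I^n\ov I)\le\lm((I^n:\m)/I^n)=\binom{n+d-2}{d-1}$ (this length counting the irreducible components of $I^n$, since $R$ is Gorenstein), whence
$$\lm(R/\ov{I^{n+1}})\ge\lm(R/I)\binom{n+d}{d}-(\lm(\ov I/I)+1)\binom{n+d-1}{d-1}+\binom{n+d-2}{d-2};$$
comparing leading terms with $\ov P_I(n+1)$ gives $\ov e_1(I)\le\lm(\ov I/I)+1$, hence $\ov e_1(I)=\lm(\ov I/I)+1=\ov e_0(I)-\lm(R/\ov I)+1$. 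But then \cite[Theorem 9]{i1} forces $\ov e_2(I)=1$ and $\ov r(I)=2$, and Theorem \ref{s5} then forces $\ov e_1(I)=\lm(\ov I/I)$, contradicting $\ov e_1(I)=\lm(\ov I/I)+1$. Therefore $\ov e_1(I)=\lm(\ov I/I)$ and $\ov r(I)\le 2$, completing the proof.
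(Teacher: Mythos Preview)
Your proof is correct and follows the same architecture as the paper's: induction via Theorem~\ref{s1} to reduce to $d=3$, Blancafort's formula plus the vanishing of Theorem~\ref{s2} to identify $\ov e_3(I)$ with a local cohomology length, and Lemma~\ref{ilema} to translate the vanishing into the containment $\ov{I^{n+2}}\subseteq I^n$; then Proposition~\ref{ipro2} and Proposition~\ref{ipro1} for Part~(2).

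Two remarks. First, the inductive transfer in Part~(1) is simpler than you suggest: you do not need Itoh's Proposition~17. Since the element $x=\sum_i x_iT_i$ has positive degree in the $T_i$, one has $I^nC\cap R=I^n$ directly (an element of $R$ lying in $I^nR(T)+(x)$ already lies in $I^nR(T)$ by comparing $T$-degrees). Combined with $\ov{J^{n+2}}\cap R=\ov{I^{n+2}}$ from Theorem~\ref{s1}(1), the containment $\ov{J^{n+2}}\subseteq J^n$ pulls back immediately. Your ``alternative'' cohomological bookkeeping is therefore unnecessary.

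Second, in Part~(2) you treat the borderline case $\ov{I^2}=I\ov I$ explicitly, whereas the paper simply invokes Proposition~\ref{ipro2} (whose hypothesis $\ov{I^2}\ne I\ov I$ excludes this case). Your handling is correct but more elaborate than needed: once the length estimate gives $\ov e_1(I)\le\lm(\ov I/I)+1$ and \cite[Theorem~9]{i1} yields $\ov r(I)=2$, you are already done---the subsequent contradiction argument only shows, additionally, that this borderline case cannot actually occur. Either way the conclusion $\ov r(I)\le 2$ is secured, so your treatment of this edge case is a genuine (if minor) improvement over the paper's terse invocation of Proposition~\ref{ipro2}.
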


\begin{proof}
(1) Apply induction on $d$. Assume $d=3$. By Theorem 
\ref{blanc} and Theorem \ref{s2}, we have $\ov e_3(I)=
\lm(H^3_{\mathcal R_+}(\ov{\mathcal{ R}}(I))_0\geq 0$. If $\ov e_3(I)=0$ then
$\left[H^3_{\mathcal R_+}(\ov{\mathcal{ R}}(I))\right]_0=0$. 
Therefore $\alpha_0=0$ in lemma \ref{ilema}. Hence $\ov{I^{n+2}}\subseteq I^n$
for every $n\geq 0$.
Using Theorem \ref{s1} it is easy to see that the result holds in higher
dimension also.\\
(2) Let $R$ be Gorenstein and $\ov I=\m$. Let $\ov{e}_3(I)=0$. Then
$\m\ov{I^{n+1}}\subseteq \ov{I^{n+2}} \subseteq I^n$ for 
all $n \geq 0$. By proposition \ref{ipro2}, $\ov{I^{n+2}}=I^n\ov {I^2}$ for
every $n\geq 0$. The converse
follows from Proposition \ref{ipro1}.
\end{proof}

\noindent Huckaba-Huneke in \cite{hh}  gave a different proof of the
non-negativity of $\ov e_3(I)$
by reducing to dimension three and then applying the following theorem.
\begin{theorem}{\rm \cite[Theorem 3.1]{hh}}
 Let $(R,\m)$ a $d$-dimensional Cohen-Macaulay local ring. Let $I$ be a normal
ideal with $\grade(I)\geq 2$ and $I$ be integral over an ideal generated by an
$R$-regular
sequence. Then there exists $n$ such that $\depth G(I^n)\geq 2$.
\end{theorem}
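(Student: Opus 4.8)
The plan is to produce the required power by combining the normality of $I$ (which already forces $\depth G(I)\geq 1$) with a single regular superficial element, reducing the problem to positive depth of an associated graded ring in a Cohen-Macaulay quotient of dimension $d-1$, and then using the Huneke--Itoh Intersection Theorem (Theorem \ref{ithm2}) to repair the loss of normality after passing to a high power.

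First I would reduce to the case $R/\m$ infinite via the faithfully flat extension $R\rightsquigarrow R[X]_{\m R[X]}$, under which Cohen-Macaulayness, $\grade I\geq 2$, normality, and the reduction hypothesis are all preserved. Writing $J=(x_1,\dots,x_s)$ for the regular sequence over which $I$ is integral, $J$ is a reduction of $I$, so $\ov{J^k}=\ov{I^k}$; since $I$ is normal, $\ov{I^k}=I^k$, whence $I^k=\ov{J^k}$ for every $k$. Thus the $I$-adic filtration is exactly the integral closure filtration of the complete intersection $J$, so $G(I)=\ov G(J)$, and the Huneke--Itoh relations $J^k\cap I^{k+1}=J^k I$ (and their iterates) are available. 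Because the Ratliff--Rush closure of any power lies between the power and its integral closure, normality gives $\widetilde{I^m}=I^m$ for all $m$, so $\depth G(I)\geq 1$; the same holds for every $I^n$ (a power of a normal ideal is normal), whence $\depth G(I^n)\geq 1$ for all $n$.

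To boost the depth, I would choose $x\in J$ general, so that $x$ is $R$-regular and superficial for $\{I^m\}$. Since $\depth G(I)\geq 1$, the leading form $x^*$ is a nonzerodivisor and $I^{m+1}:x=I^m$ for all $m$; iterating gives $I^{nm}:x^n=I^{n(m-1)}$, and as $x^n$ is $R$-regular, $x^nR\cap I^{nm}=x^nI^{n(m-1)}$ for all $m$. By the Valabrega--Valla criterion (Theorem \ref{vv}) the leading form $(x^n)^*$ is then a nonzerodivisor on $G(I^n)$ and
$$G(I^n)/(x^n)^*G(I^n)\;\cong\;G_{\ov R}\big(\,\text{image of }I^n\,\big),\qquad \ov R=R/x^nR,$$
with $\ov R$ Cohen-Macaulay of dimension $d-1$. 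Consequently $\depth G(I^n)\geq 2$ if and only if the images of the ideals $I^{nm}$ in $\ov R$ are Ratliff--Rush closed for every $m\geq 1$, for which it suffices that each $I^{nm}+x^nR$ be integrally closed in $R$.

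The crux, and the main obstacle, is precisely this asymptotic normality in the quotient: although $I^{nm}$ is integrally closed, the ideal $I^{nm}+x^nR$ need not be, since $x^n\notin I^{nm}$ once $m\geq 2$ and normality does not descend to $\ov R$. The plan is to measure the defect $\ov{I^{nm}+x^nR}/(I^{nm}+x^nR)$ by tracking the monomials $x^{nj}I^{nm(i-j)}$ that occur in the powers $(I^{nm}+x^nR)^i$ and applying the Huneke--Itoh relations $J^k\cap I^{k+1}=J^kI$ to force any element integral over $I^{nm}+x^nR$ back into $I^{nm}+x^nR$. The hard part will be showing that the possible correction terms are confined to degrees bounded independently of $m$, so that choosing $n$ larger than the reduction number of $I$ with respect to $J$ squeezes the defect to zero for all $m$ simultaneously. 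Granting this, every image of $I^{nm}$ in $\ov R$ is integrally closed, hence Ratliff--Rush closed, so $\depth G_{\ov R}(\text{image of }I^n)\geq 1$ and therefore $\depth G(I^n)\geq 2$ for this $n$, as desired.
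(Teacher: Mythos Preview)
The survey paper does not supply its own proof of this theorem; it merely quotes the result from Huckaba--Huneke \cite{hh} and uses it as a black box in the discussion of $\ov e_3(I)$. There is therefore nothing in the paper to compare your outline against.

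On its own merits, your outline has the right opening moves---normality gives $\widetilde{I^m}=I^m$, hence $\grade G(I)_+\geq 1$, and one can indeed choose $x\in J$ with $x^*$ a nonzerodivisor on $G(I)$ (since $J$ is a reduction, its image in $I/I^2$ is contained in no associated prime of $G(I)$). The reduction to showing $\depth G_{\ov R}(I^n\ov R)\geq 1$ via the identification $G(I^n)/(x^n)^*\cong G_{\ov R}(I^n\ov R)$ is also sound. However, the proposal stalls exactly at the decisive point: you write ``Granting this'' for the assertion that the images $I^{nm}\ov R$ are Ratliff--Rush closed for all $m$ once $n$ is large, and offer only a heuristic involving the Huneke--Itoh relations and the reduction number. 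Two specific concerns make this more than a routine omission. First, your sufficiency claim ``it suffices that each $I^{nm}+x^nR$ be integrally closed in $R$'' is not clearly correct: an integral dependence relation for $\ov a$ over $I^{nm}\ov R$ lifts to an equation $a^k+c_1a^{k-1}+\cdots+c_k=x^nb$ in $R$ with $c_j\in I^{nmj}$, but the term $x^nb$ lies only in $(x^n)$, not in $(I^{nm}+x^nR)^k$, so one does not obtain an integral equation over $I^{nm}+x^nR$. Second, the hoped-for uniform bound on the ``correction terms'' has no visible mechanism; the Huneke--Itoh identity $J^k\cap I^{k+1}=J^kI$ controls intersections with powers of the full parameter ideal $J$, not with the single principal ideal $(x^n)$, and nothing in the outline explains how to bridge that gap. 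As written, the proposal correctly isolates the difficulty but does not overcome it.
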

\noindent In \cite{cpr} Corso, Polini and Rossi showed that in
 an analytically unramified Cohen-Macaulay local ring of dimension three  if
$\ov e_3(I)=0$ for an $\m$-primary ideal then $G(I^n)$ is Cohen-Macaulay for
large $n$.
\begin{theorem} \cite[Corollary 4.5]{cpr}
 Let $(R,\m)$ be a local Cohen-Macaulay ring of dimension three with infinite 
residue field. Let $I$ be an $\m$-primary ideal of $R$ such that $I$ is 
asymptotically normal. Then $e_3(I)=0$ if and only if there exists some $n$
such 
that the reduction number of $I^n$ is at most two. Under these conditions,
$G(I^n)$ is Cohen-Macaulay for $n>>0$.
\end{theorem}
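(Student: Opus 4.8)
The plan is to establish two implications, ``$r(I^n)\le 2$ for some $n$ $\Rightarrow$ $e_3(I)=0$'' and ``$e_3(I)=0$ $\Rightarrow$ $G(I^m)$ is Cohen-Macaulay for all $m\gg 0$'', and then to read off the remaining assertions from them. First I would record the consequences of asymptotic normality. Fix $N\ge 1$ with $\ov{I^n}=I^n$ for all $n\ge N$. Then $\{\ov{I^n}\}$ agrees with $\{I^n\}$ in all large degrees, so it is an $I$-admissible filtration, $R$ is analytically unramified \cite{rees1}, $\ov e_i(I)=e_i(I)$ for all $i$, and for every $m\ge N$ the ideal $I^m$ is normal, so that $\ov G(I^m)=G(I^m)$ and $\ov e_i(I^m)=e_i(I^m)$. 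Since $H_{I^m}(n)=\lm(R/I^{mn})=H_I(mn)$, one has $P_{I^m}(x)=P_I(mx)$ as polynomials, whence $e_3(I^m)=e_3(I)$ for all $m\ge 1$. I would also use the identity $e_3(J)=\sum_{j}\binom{j}{3}h_j$, valid for any $\m$-primary ideal $J$, where $H(G(J),t)=h(t)/(1-t)^{3}$ with $h(t)=\sum_j h_j t^j$; it follows by evaluating the Hilbert polynomial at $0$. The degree of $h$ equals $n(J)+d$; hence if $\depth G(J)\ge d-1$ then by Theorem \ref{redn} $\deg h=r(J)$, and if moreover $G(J)$ is Cohen-Macaulay then all $h_j\ge 0$, so $e_3(J)\ge 0$ with equality if and only if $r(J)\le 2$.

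For the first implication, suppose $r(I^n)\le 2$ for some $n$. It is a known fact that $r(I^n)\le 2$ forces $\depth G(I^n)\ge d-1$; together with the previous paragraph this gives $\deg h^{(n)}=r(I^n)\le 2$, where $h^{(n)}$ is the $h$-polynomial of $G(I^n)$, and hence $e_3(I)=e_3(I^n)=\sum_{j}\binom{j}{3}h^{(n)}_j=0$.

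For the second implication, assume $e_3(I)=0$. For $m\ge N$ the normal ideal $I^m$ has $\grade I^m=d=3\ge 2$ and is integral over any of its minimal reductions $J_m$, which is a complete intersection because $R$ is Cohen-Macaulay with infinite residue field; hence by \cite[Theorem 3.1]{hh}, applied to the ideals $I^m$ (and a standard argument propagating the depth bound to all large powers), one has $\depth G(I^m)\ge d-1=2$ for all $m\gg 0$. Fix such an $m$. By Blancafort's formula (Theorem \ref{blanc}) for the normal filtration of $I^m$ in dimension $3$, combined with Itoh's vanishing results (Theorem \ref{s2}, which yields $[H^i_{\mathcal R_+}(\ov{\mathcal R}(I^m))]_0=0$ for $i\le 2$), one obtains
$$0=e_3(I)=e_3(I^m)=\ov e_3(I^m)=\lm\big([H^3_{\mathcal R_+}(\ov{\mathcal R}(I^m))]_0\big).$$
Now $\depth G(I^m)\ge d-1$ says $H^i_{\mathcal R_+}(G(I^m))=0$ for $i<d-1$, so $G(I^m)$ is Cohen-Macaulay if and only if $H^{d-1}_{\mathcal R_+}(G(I^m))=0$; the plan is to deduce this last vanishing from the vanishing of $[H^3_{\mathcal R_+}(\ov{\mathcal R}(I^m))]_0$ by passing through the canonical exact sequences relating the local cohomology of $\ov{\mathcal R}(I^m)$, of $\mathcal R(I^m)$ and of $G(I^m)$ (as in the discussion preceding Lemma \ref{ilema}), and using that, since $I^m$ is normal, Itoh's vanishing confines the relevant graded components to a range of degrees over which the degree-$0$ vanishing propagates. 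Once $G(I^m)$ is known to be Cohen-Macaulay, the remaining assertions follow: $0=e_3(I^m)=\sum_j\binom{j}{3}h^{(m)}_j$ with $h^{(m)}_j\ge 0$ forces $r(I^m)=\deg h^{(m)}\le 2$, which gives the $n$ required for the converse of the first implication, and $G(I^m)$ is Cohen-Macaulay for all $m\gg 0$ by construction.

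The main obstacle is the step just described: upgrading $\depth G(I^m)\ge d-1$ together with the single vanishing $e_3(I^m)=0$ to full Cohen-Macaulayness of $G(I^m)$. The depth bound alone is too weak, since after killing a maximal regular sequence of superficial elements one lands in dimension $1$, where the $h$-vector of the associated graded ring may have negative entries, and then $e_3=\sum_j\binom{j}{3}h_j=0$ does not by itself force $\deg h\le 2$. Bridging this gap requires genuinely using the normality of $I^m$ and Itoh's vanishing theorems to show that $H^{d-1}_{\mathcal R_+}(G(I^m))$ and $H^d_{\mathcal R_+}(\ov{\mathcal R}(I^m))$ are supported, in the grading, only in degrees where the vanishing in degree $0$ of the latter forces the former to vanish identically; this is precisely where the hypotheses $\dim R=3$ and $I$ asymptotically normal are used.
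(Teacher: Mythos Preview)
The paper does not give its own proof of this result; it is simply quoted from \cite{cpr} without argument, so there is no proof in the paper to compare against.

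That said, your proposal has genuine gaps beyond the one you yourself flag. In the first implication you assert as a ``known fact'' that $r(I^n)\le 2$ forces $\depth G(I^n)\ge d-1$. This is not a general fact for the adic filtration of an $\m$-primary ideal in a Cohen--Macaulay ring: the implication $\ov r(I)\le 2\Rightarrow \ov G(I)$ Cohen--Macaulay in Theorem~\ref{s7} goes through the Huneke--Itoh intersection theorem $\ov{I^2}\cap J=J\ov I$, which is specific to the integral-closure filtration and has no analogue $I^{2n}\cap J=JI^n$ for an arbitrary power $I^n$. Since the ``some $n$'' in the hypothesis may well be smaller than your $N$, you cannot assume $I^n$ is normal, and you provide no device (for instance, producing some $m\ge N$ with $\ov r(I^m)\le 2$, which Proposition~\ref{ipro1} would then convert into $\ov e_3(I^m)=0=e_3(I)$) to reduce to the normal case. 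Without that, your $h$-vector computation $e_3(I^n)=\sum_j\binom{j}{3}h^{(n)}_j=0$ does not go through.

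For the second implication you correctly locate the crux and then do not resolve it. Your appeal to \cite[Theorem~3.1]{hh} yields only a \emph{single} power with $\depth G(I^n)\ge 2$, not all large $m$; the ``standard argument propagating the depth bound to all large powers'' is asserted but not supplied, and it is not routine. The closing paragraph is a description of what would have to be shown rather than a proof: you still owe an actual argument that the vanishing $[H^3_{\mathcal R_+}(\ov{\mathcal R}(I^m))]_0=0$ forces $H^2_{\mathcal R_+}(G(I^m))=0$, and the canonical exact sequences alone do not give this without further control on the graded pieces of $H^3$. As written, neither direction is complete.
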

The following example from \cite{cpr} shows that if an ideal $I$ is
asymptotically normal such that $e_3(I)=0$ then $G(I)$ need not be
Cohen-Macaulay.
\begin{example}
 Let $R=k[|x,y,z|]$ with $k$ a field and $x,y,z$ indeterminates. Consider the
ideal $I=(x^2-y^2,y^2-z^2,xy,xz,yz)$. Then $I^n$ is integrally closed for every
$n\geq 2$. We also have
 $$F_I(t)=\dfrac{5+6t^2-4t^3+t^4}{(1-t)^3}$$
 which gives $e_3(I)=0$ but $G(I)$ has depth zero. On the other hand $I^2$ is a
normal ideal with $e_3(I^2)=0$ and $G(I^2)$ is Cohen-Macaulay and reduction
number is two. 
\end{example}
 Huckaba and Huneke in \cite{hh} have  shown that in a two dimensional
analytically unramified Cohen-Macaulay local ring, for some $n,$ 
$\ov G({I^n})$ is Cohen-Macaulay.
 \begin{theorem} \cite[Theorem 3.1]{hh}
  Let $(R,\m)$ be a two dimensional Cohen-Macaulay local ring with infinite
residue field and let $I$ be an normal $\m$-primary ideal of $R$. Then there
exists $n$ such that $G(I^n)$ is Cohen-Macaulay. 
 \end{theorem}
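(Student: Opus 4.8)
The statement to be proved is: in a two-dimensional Cohen--Macaulay local ring $(R,\m)$ with infinite residue field, if $I$ is a normal $\m$-primary ideal then there is some $n$ with $\ov G(I^n) = G(I^n)$ Cohen--Macaulay. (Since $I$ is normal, $\ov{I^k}=I^k$ for all $k$, so $G(I^n)$ and $\ov G(I^n)$ coincide.) The plan is to reduce the question to a numerical statement about the reduction number of $I^n$ and then show the reduction number drops to two after passing to a high enough power.

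First I would pass to a minimal reduction $J=(a,b)$ of $I$, which exists because $R/\m$ is infinite and $\dim R=2$; then $J$ is generated by a regular sequence. By the Huneke--Itoh Intersection Theorem (Theorem \ref{ithm2}) applied to $J$, one has $J^k\cap \ov{J^{k+1}}=J^k\ov J$ for all $k\ge 1$, and since $I$ is normal and integral over $J$ we have $\ov{J^k}=\ov{I^k}=I^k$ for all $k$. The next step is to recall, via Theorems \ref{s5} and \ref{s3} (or directly via the Huckaba--Marley Theorem \ref{HM}), that the normal reduction number $\ov r(I)$ is controlled by $\ov e_1(I)$ and $\ov e_2(I)$: in particular $\ov r(I)\le 2$ iff $\ov e_2(I)=\ov e_1(I)-\lm(\ov I/I)$, and when this holds $\ov G(I)$ is Cohen--Macaulay. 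So it suffices to produce an $n$ for which the normal (equivalently ordinary, since $I$ is normal) reduction number of $I^n$ is at most two.

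For that, I would use the standard fact that the Rees algebra $\R_+(\{I^n\})=\bigoplus_{n\ge 0}I^n t^n$ is a finitely generated graded algebra over $R$, hence its $t^{-1}$-adic behaviour stabilizes; concretely, the associated graded ring $G(I)$ has a finite filtration and there exists an integer $n_0$ such that $\depth G(I^{n})\ge 2 = \dim R$ for all $n\ge n_0$ — this is exactly the content of the Huckaba--Huneke result quoted just above (Theorem \cite[Theorem 3.1]{hh}), which says that for a normal $\m$-primary ideal in a two-dimensional Cohen--Macaulay local ring there is some $n$ with $\depth G(I^n)\ge 2$. Since $\dim R = 2$, $\depth G(I^n)\ge 2$ forces $G(I^n)$ Cohen--Macaulay. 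As $I^n$ is again normal (powers of a normal ideal are normal), we have $\ov G(I^n)=G(I^n)$, and we are done.

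The main obstacle is the Veronese-type stabilization step: one must know that after replacing $I$ by $I^n$ for $n\gg 0$ the depth of the associated graded ring reaches $d$. The cleanest route is simply to invoke the Huckaba--Huneke theorem exactly as stated in the excerpt (it is the $d=2$ case); alternatively, one argues that $\ov{\R_+}(I)$, being a two-dimensional Cohen--Macaulay-on-the-punctured-spectrum normal-ish algebra of finite type, has only finitely many graded pieces obstructing depth, so a high Veronese $\ov{\R_+}(I)^{(n)}\cong \ov{\R_+}(I^n)$ becomes Cohen--Macaulay, whence $\ov G(I^n)$ is Cohen--Macaulay. In either presentation, the reduction of the problem to a reduction-number statement via the Huckaba--Marley and Valabrega--Valla criteria is routine; the genuine input is the existence of that good power $n$.
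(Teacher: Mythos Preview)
The paper does not prove this theorem; it merely quotes it from \cite{hh}. So there is no ``paper's own proof'' to compare against. What can be assessed is whether your proposal actually establishes the result independently, and on that point there is a genuine gap.

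Your argument is circular. After reducing the question to showing that some power $I^n$ has $\depth G(I^n)\ge 2$, you write that ``this is exactly the content of the Huckaba--Huneke result quoted just above (Theorem \cite[Theorem 3.1]{hh}).'' But that earlier quoted result and the theorem you are asked to prove are the \emph{same} theorem in \cite{hh}: the general statement gives $\depth G(I^n)\ge 2$, and in dimension two that is precisely Cohen--Macaulayness of $G(I^n)$. Invoking it here is assuming what you want to prove. The preliminary material you set up (the minimal reduction $J=(a,b)$, the Huneke--Itoh intersection theorem, the Huckaba--Marley/Valabrega--Valla criteria) is correct and relevant, but none of it produces the crucial integer $n$; you explicitly outsource that step to the theorem itself.

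Your fallback sketch (``$\ov{\R_+}(I)$ is Cohen--Macaulay on the punctured spectrum, so a high Veronese becomes Cohen--Macaulay'') points in a reasonable direction but is not an argument as written: $\R_+(I)$ has dimension $3$, not $2$, the phrase ``normal-ish'' is not a hypothesis, and the passage from a Veronese of the Rees algebra being Cohen--Macaulay to $G(I^n)$ being Cohen--Macaulay needs justification. An honest proof along these lines would have to show, for instance, that the relevant local cohomology modules $H^i_{\mathcal R_+}(\R_+(I))$ have only finitely many nonzero graded components (using normality and the two-dimensional Cohen--Macaulay hypothesis), so that after passing to a suitable Veronese they vanish; this is essentially what Huckaba and Huneke do, and it is the substantive content you have omitted.
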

 They have also shown that the above result cannot be extended to higher
dimension in view of
 \begin{example}
  Let $k$ be a field of characteristic $\not= 3$. Set $R=k[|x,y,z|]$. Let 
  $$N=(x^4,x(y^3+z^3),y(y^3+z^3),z(y^3+z^3))$$
  and set $I=N+\m^5$ where $\m=(x,y,z)$. Then $I$ is a height $3$ normal ideal
of $R$ and $G(I^n)$ is not Cohen-Macaulay for any $n\geq 1$.
 \end{example}

  \section{Normal Hilbert Polynomials in two dimensional regular local rings}
  {\it Throughout this section, we assume that  $(R,\m)$
  is a regular local ring of dimension two unless otherwise stated.}
  
  In this section we present a formula for the normal Hilbert polynomial of two
$\m$-primary ideals  of $R.$ This is a consequence of a result of Lipman and
Teissier \cite {lt} that for any complete $\m$-primary ideal in $R,$ $\ov r(I)
\leq 1.$ The formula has also been derived in \cite{lip}, \cite{hun2} and
\cite{r2}.      We shall use joint reductions and Zariski's theory of complete
ideals in $R$
  to derive this formula.  We shall also derive the same formula using a formula
of Hoskin and Deligne for the colength of a complete $\m$-primary ideal.
  
  A crucial step for obtaining a formula for the normal Hilbert polynomial of
two ideals  is to show that if $I$ and $J$ are complete $\m$-primary ideals in
$R$ with $R/\m$ infinite then there exist $a\in I$ and $b\in J$ such that
$aI+bJ=IJ$. This was proved in
  \cite{verma}. By taking $I=J$ we get Lipman-Teissier formula: $I^2=(a,b)I.$  
  
Joint reductions were introduced by Rees in \cite{r}. Let $I_1,I_2\ldots, I_r$
be ideals in a local ring $(R,\m)$. A set of elements $(x_1,\ldots ,x_r)$ such
that $x_i\in I_i$ is called a joint reduction of the set of ideals $(I_1,\ldots
, I_r)$ if there are positive integers $a_1,\ldots ,a_r$ such that 
  $$\sum_{i=1}^r x_iI_1^{a_1}I_2^{a_2}\ldots I_i^{a_i-1} \ldots
I_r^{a_r}=I_1^{a_1} \ldots I_r^{a_r}.$$
  
  Rees showed \cite{r} that if $R/\m$ is infinite and $I_1,\ldots, I_r$ are
$\m$-primary ideals in any local ring $(R,\m)$ then joint reductions exist. Liam
O'Carroll showed the existence of joint reductions for the arbitrary ideals
\cite{o}. 
  
  We now recall a few facts from Zariski's theory of complete ideals.  An ideal
$I$ of $R$ is called {\bf contracted} if there is an $x\in \m\setminus \m^2$
such that $IR[\m/x]\cap R=I$. Here $R[\m/x]=R[y/x]$  where $\m=(x,y)$. By 
\cite[Lemma 3, Appendix 5]{zs}, if $I$ and $J$ are contracted from $R[\m/x]$
then so is $IJ$. Rees \cite[Lemma 3.1]{r2} and Lipman \cite[Corollary 3.2]{lip}
showed that if $I$ is $\m$-primary and contracted then $\mu(I)=1+o(I)$ where
$\mu(I)=\dim(I/\m I)$ and $o(I)=\m$-adic order of $I=\max\{n \mid I\subseteq
\m^n\}$. Huneke and Sally \cite[Theorem 2.1]{hunsally} proved that if $R/\m$ is
infinite and $\mu(I)=1+o(I)$ then $I$ is a contracted ideal. An important fact
about complete ideal is that they are contracted \cite[Proposition 3.1]{hun2}. 
  
  Let $I$ be an $\m$-primary ideal contracted from $S=R[\m/x]$. Let $N$ be a
maximal ideal containing $\m S$. Then $S_N$ is a $2$-dimensional regular local
ring. Suppose $o(I)=r$. Then $IS=x^rI'S$ for an ideal $I'$ of $S$. We say $I'$
is a transform of $I$ in $S$ and $I'_N$ is called the transform of $I$ in $S_N$.
By Proposition 5 of \cite[Appendix 5]{zs}, if $I$ is complete then so is $I'$
and $I'_N$. Finally by \cite[Proposition 3.6]{hun2}, $e(I)>e(I'_N)$.
  
  \begin{theorem} \cite[Theorem 2.1]{verma}
  Let $(R,\m)$ be a $2$-dimensional regular local ring with $R/\m$ infinite. 
Let $I$ and $J$ be $\m$-primary complete ideals. Then there exists  a  joint
reduction $(a,b)$ of $(I,J)$ such that 
  $$aJ+bI=IJ.$$
  \end{theorem}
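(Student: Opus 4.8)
The plan is to prove the statement by induction on the multiplicity $e(I)+e(J)$ (or, equivalently, on a suitable complexity measure of the pair), using Zariski's factorization of complete ideals together with the transform operation recalled just above. First I would observe that it suffices to produce elements $a \in I$ and $b \in J$ for which $aJ+bI=IJ$; the ``joint reduction'' terminology then follows automatically since $\mu(a I^0 J) = \mu(a J) \leq \mu(IJ)$ forces the defining equation of a joint reduction with $a_1=a_2=1$. The base of the induction is the case where one of $I,J$ equals $\m$: here Lipman--Teissier (the case $I=J=\m$ giving $\m^2=(a,b)\m$, more generally $\ov{r}(\m I)\le 1$) or a direct argument with a general linear form handles the claim, since $\m$ is generated up to reduction by any two general elements.

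For the inductive step I would pick $x \in \m \setminus \m^2$ general enough that both $I$ and $J$ are contracted from $S=R[\m/x]$ (possible since complete ideals are contracted, and a general $x$ works for both simultaneously as $R/\m$ is infinite). Write $o(I)=r$, $o(J)=s$, so $IS = x^r I' S$ and $JS = x^s J' S$ with $I', J'$ the transforms, again complete. Then at each maximal ideal $N \supseteq \m S$ of $S$, the localization $S_N$ is a two-dimensional regular local ring with infinite residue field (or one passes to a further general linear form), and by the induction hypothesis—justified by $e(I'_N) < e(I)$ and $e(J'_N) < e(J)$ from \cite[Proposition 3.6]{hun2}—there is a joint reduction of $(I'_N, J'_N)$. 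The technical heart is then to glue these local joint reductions on $\Spec S$ into elements of $S$, and to pull them back through the contraction $(\,\cdot\,)S \cap R = (\,\cdot\,)$ to produce the desired $a \in I$, $b \in J$. Here one uses that $IJ$ is contracted from $S$ (Zariski, \cite[Lemma 3, Appendix 5]{zs}), so $IJ = (IJ)S \cap R = x^{r+s}(I'J')S \cap R$, and the factor $x^{r+s}$ and the equation $aJ'+bI' = I'J'$ in $S$ descend to $aJ+bI=IJ$ in $R$ after clearing the common power of $x$ and contracting.

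The main obstacle I expect is precisely this globalization-and-descent step: a joint reduction is a local-to-global-flavored object, and passing from joint reductions of $(I'_N,J'_N)$ at the finitely many relevant maximal ideals $N$ of $S$ to a single pair $(a,b)$ in $S$ (and then in $R$) requires a prime-avoidance / general-position argument to choose elements of $I'$ and $J'$ that simultaneously work at all the $N$, together with a check that the resulting identity is insensitive to the remaining (non-$\m$-primary) part of the transform. A clean way to organize this is to choose $a, b$ as generic $R$-linear (equivalently $k$-linear) combinations of fixed generators of $I$ and $J$ respectively, show that genericity guarantees the joint-reduction equation holds after transforming to each $S_N$, invoke the induction hypothesis there, and then contract; the equation $aJ+bI \subseteq IJ$ is automatic, and equality is checked prime-by-prime on the (two-dimensional, regular) ring, where it reduces to the already-established local statements. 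One should also handle the degenerate case $o(I)=0$ or $o(J)=0$ (i.e.\ $I=R$ or $J=R$) trivially at the outset.
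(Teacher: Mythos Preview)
Your inductive framework via quadratic transforms is the right one and matches the paper's, but you miss the decisive technical step, and the ``globalization'' obstacle you worry about is a symptom of that gap rather than the real difficulty.

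The paper does not attempt to glue local joint reductions. Instead it \emph{starts} with an arbitrary joint reduction $(a,b)$ of $(I,J)$, whose existence is guaranteed by Rees since $R/\m$ is infinite, and proves that this fixed pair already satisfies $aJ+bI=IJ$. The step you are missing is showing that $K:=aJ+bI$ is itself a contracted ideal. The paper establishes this by a generator count: from the joint-reduction equation one sees that $a$ (resp.\ $b$) lies in a minimal generating set of $I$ (resp.\ $J$); then an explicit syzygy computation, using only that $a,b$ is a regular sequence, shows the $r+s+1$ elements $ab_0,\ldots,ab_s,ba_1,\ldots,ba_r$ minimally generate $K$, where $o(I)=r$, $o(J)=s$. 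Since $o(K)=o(IJ)=r+s$, this gives $\mu(K)=1+o(K)$, hence $K$ is contracted by the Huneke--Sally criterion.

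Once $K$ is contracted, the descent is immediate and no gluing is needed: choose $x$ so that $K$ and $IJ$ are both contracted from $S=R[\m/x]$, write $KS=x^{r+s}(a'J'+b'I')S$ and $IJS=x^{r+s}I'J'S$, and note that $(a',b')$ is again a joint reduction of $(I',J')$. By induction at each maximal ideal $N\supseteq\m S$ (using $e(I'_N)<e(I)$ and $e(J'_N)<e(J)$) one gets $(a'J'+b'I')S_N=I'J'S_N$, hence equality in $S$, and contracting yields $K=IJ$. Your plan never secures $(aJ+bI)S\cap R=aJ+bI$, and without it equality in $S$ does not descend to $R$; the generic-choice/prime-avoidance argument you sketch does not supply this. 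Note also that the induction is really proving the statement for \emph{every} joint reduction, which is exactly what lets it apply to the transformed pair $(a',b')$ with no patching. Finally, your base case is slightly off: the paper inducts on $t=\max\{e(I),e(J)\}$ with base $t=1$, i.e.\ $I=J=\m$, not ``one of $I,J$ equals $\m$''.
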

  
  \begin{proof}
  We may assume that $R/\m$ is infinite. Apply induction on
$t=\max\{e(I),e(J)\}$. Let $t=1.$ Then $e(I)=1=\lm(R/L)$ where $L$ is a minimal
reduction of $I.$  Thus $I=L=\m$ and similarly $J=\m$. Since $R$ is regular
$\m=(x,y)$ for some $x,y\in \m$. Thus $x\m+y\m=\m^2$. Now let $t>1$. Let $(a,b)$
be a joint reduction of $(I,J)$. We show that the ideal $K=aJ+bI$ is contracted.
Let 
  $$aI^{n-1}J^n+bI^nJ^{n-1}=I^nJ^n$$
  for some $n$. This equation implies that $a$ ( respectively $b$) is part of a
minimal basis of $I$ (respectively $J$). Let $o(I)=r$ and $o(J)=s$. Since $I$
and $J$ are complete, they are contracted. Hence $\mu(I)=r+1$ and $\mu(J)=s+1$.
Let $I=(a_0,\ldots ,a_r)$ and $J=(b_0,\ldots, b_s)$ where $a=a_0$ and $b=b_0$.
Now we show that $K$ is minimally generated by the $r+s+1$ elements:
  $$a_0b_0, a_0b_1, \ldots ,a_0b_s, b_0a_1,b_0a_2, \ldots ,b_0a_r.$$
  Indeed, let 
  $$\sum_{i=0}^sa_0b_iu_i+\sum_{j=1}^rb_0a_jv_j=0$$
  where $u_0,\ldots ,u_s$ and $v_1,\ldots ,v_r\in R.$ As $a,b$ is a regular
sequence there is an $f\in R$ such that 
  $$a_0f=\sum_{j=1}^ra_jv_j \mbox{ and hence } b_0f=-\sum_{i=0}^sb_iu_i.$$
  Hence $f,v_1,\ldots, v_r\in \m$ and $u_0+f,u_1,\ldots ,u_s\in \m$. Thus
$\mu(K)=r+s+1$. Since $K=aJ+bI$ is a reduction of $IJ$, $o(K)=o(IJ)=r+s$. Hence
$K$ is a contracted ideal. Pick $x\in \m \setminus \m^2$ such that $K$ and $IJ$
are contracted from $S=R[\m/x]$. We write $a=a'x^r$, $b=b'x^s$, $I=x^{r}I'S$,
$J=x^sJ'S$ where $I',J'$ are ideals in $S$ and $a',b'\in S$. Then 
  $$KS=(a'J'+b'I')x^{r+s}S, ~~ IJS=x^{r+s}I'J'S.$$
  Therefore $(a',b')$ is a joint reduction of $(I',J')$. As $IJ$ and $K$ are
contracted from $S$ it is enough to show $a'J'+b'I'=I'J'$. To prove that 
  $a'J'+b'I'=I'J'$, localize at any maximal ideal $N$ of $S$ containing
$a'J'+b'I'$. Since $e(I'_N)<e(I)$ and $e(J'_N)<e(J)$, it follows that
$(a'J'+b'I')S_N=I'J'S_N$. Therefore $a'J'+b'I'=I'J'$ and consequently
$IJ=aJ+bI$.

  \begin{lemma}
  Let $(R,\m)$ be a local ring of dimension $\geq 2$. Let $I$ and $J$ be ideals
of $R$ and $a\in I$, $b\in J$ be such that $(a,b)$ is a regular sequence. Then
the $R$-module homomorphism
  $$f: \frac{R}{I}\oplus \frac{R}{J}\longrightarrow \frac{(a,b)}{aJ+bI}$$
  defined as $f(\ov x, \ov y)=(xb+ya)+(aJ+bI)$ is an isomorphism.
\end{lemma}

\begin{proof}
It is clear that $f$ is surjective. For injectivity let $xb+ya\in aJ+bI$. Then
$xb\in (a,bI)$. Choose $c\in R$ and $d\in I$ such that $xb=ac+bd$. Hence
$b(x-d)=ca$ which implies $x-d\in (a:b)=(a)$. Thus $x\in I$. Similarly $y\in J$.
Therefore $f$ is injective and hence an isomorphism.
\end{proof}

\begin{theorem}
Let $(R,\m) $ be a $2$-dimensional Cohen-Macaulay local ring with $R/\m$ infinite.
 Let $I$ and $J$ be
$\m$-primary ideals. Then the following are equivalent. 
\begin{enumerate}
\item[{\rm (a)}] There exist $a\in I$ and $b\in J$  such that $aJ+bI=IJ$,
\item[{\rm (b)}] For all $r,s\geq 1$ $a^rJ^s+b^sI^r=I^rJ^s,$
\item[{\rm (c)}] For all $r,s\geq 1$, $\lm(R/I^rJ^s)=\lm(R/I^r)+rs
e_1(I|J)+\lm(R/J^s)$ where $e_1(I\mid J)=e(a,b)$,
\item[{\rm (d)}] $e_1(I|J)=\lm(R/IJ)-\lm(R/I)-\lm(R/J).$
\end{enumerate}
\end{theorem}

\begin{proof}
$(a)\implies (b)$ By symmetry it is enough to show that $a^rJ+bI^r=I^rJ$ for all
$r\geq 1$, Apply induction on $r$.  Assume that $a^rJ+bI^r=I^rJ$. Then
\begin{eqnarray*}
I^{r+1}J &=& I(I^rJ)\\
&=& I(a^rJ+bI^r)\\
&=& a^rIJ+bI^{r+1}\\
&=& a^r(aJ+bI)+bI^{r+1}\\
&=& a^{r+1}J+bI^{r+1}.
\end{eqnarray*}
$(b)\implies (c)$ Since 
$$R/I^r\oplus R/J^s\cong (a^r,b^s)/(a^rJ^s+b^sI^r)=(a^r,b^s)/I^rJ^s,$$
we get
\begin{eqnarray*}
\lm(R/I^rJ^s) &= &\lm(R/(a^r,b^s))+\lm(R/I^r)+\lm(R/J^s)\\
&=& rs \lm(R/(a,b))+\lm(R/I^r)+\lm(R/J^s)\\
&=& rs e_1(I|J)+\lm(R/I^r)+\lm(R/J^s).
\end{eqnarray*} 
$(c)\implies (d)$ Clear.\\
$(d)\implies (a)$ As $R/\m$ is infinite there exists a joint reduction $(a,b)$
of $(I,J)$. Since 
$$R/I\oplus R/J\cong (a,b)/(aJ+bI),$$
we have $\lm(R/aJ+bI)-e_1(I|J)=\lm(R/I)+\lm(R/J)$. Substitute
$e_1(I|J)=\lm(R/IJ)-\lm(R/I)-\lm(R/J)$ to get $IJ=aJ+bI.$
\end{proof}

\begin{corollary}\cite{lip} \label{lip}
Let $I$ be an $\m$-primary ideal of a regular local ring of dimension $2$. Then
for all $n\geq 1$, 
$$\lm(R/\ov{I^n})=e(I){n+1\choose 2}-[e(I)-\lm(R/\ov I)]n.$$
\end{corollary}

\begin{proof}
By Zariski's theorem, $\ov{I^n}=\ov I^n$ for all $n$. Hence we may assume that
$I$ is a complete ideal. Use induction on $n$. Assuming the result of $n-1$, we
have 
\begin{eqnarray*}
\lm(R/I^n) &=& \lm(R/I^{n-1}I)= \lm(R/I^{n-1})+(n-1)e(I)+\lm(R/I)\\
&=& e(I){n\choose 2}-[e(I)-\lm(R/I)](n-1)+n-e(I)+\lm(R/I)\\
&=& e(I){n+1\choose 2}-[e(I)-\lm(R/ I)]n.
\end{eqnarray*}
  \end{proof}
  
\begin{corollary}
Let $I$ and $J$ be $\m$-primary ideals of a regular local ring of dimension $2$.
Then for all $r,s\geq 0$, 
$$\lm(R/\ov{I^rJ^s})=e(I){r \choose 2}+rs e_1(I|J)+ e(J){s\choose 2}+r\lm(R/\ov
I)+s\lm(R/\ov J).$$
\end{corollary}

\begin{proof}
We may assume that $R/\m$ is infinite. Thus for any joint reduction $(a,b)$ of
$(\ov I, \ov J)$, $a\ov J+b \ov I=\ov I \ov J.$ Thus 
$$\lm(R/\ov I^r\ov J^s)=\lm(R/\ov I^r)+rs e_1(I|J)+\lm(R/\ov J^s).$$
By Zariski's theorem $\ov I^r=\ov {I^r},$ $\ov J^s=\ov {J^s}$ and $\ov I^r \ov
J^s=\ov{I^rJ^s}$. Now use Corollary
\ref{lip} to finish the proof.
\end{proof}

  We end this section by sketching an alternate proof of Lipman's formula for
normal Hilbert polynomial of an $\m$-primary ideal in a regular local ring of
dimension two. This is done by invoking a formula of Hoskin \cite{hos} which is
also proved by Deligne \cite{del} and Rees \cite{r2} independently. We refer the
reader to section 14.5 of \cite{hunsally} for a very readable account.
  
  Let $I$ be an $\m$-primary complete ideal in a $2$-dimensional regular local
ring $(R,\m)$. We say that a $2$-dimensional regular local ring $(S,\n)$
dominate $R$ birationally if $R\subset S$, $\n\cap R=\m$ and $R$ and $S$ have
equal fraction fields. Let $N$ be a maximal ideal of $T=R[\m/x]$ where $x\in
\m\setminus \m^2$ and $\m T\subset N$. Then $T_N$ is a $2$-dimensional regular
local ring, called a local quadratic transform of $R$. Abhyankar \cite[Theorem
3]{a} showed that if $S$ birationally dominates $R$ then there is a unique
sequence 
  $$R=R_0\subset R_1\subset \ldots \subset R_n=S$$
  of $2$-dimensional regular local ring such that $R_i$ is a local quadratic
transform of $R$ for $i=1,\ldots, n$. A point basis of $I$ is the set 
  $$\mathcal B(I)=\{o(I^T): T \mbox{ dominates } R \mbox{ birationally }\}$$
  where $I^T=$ transform of $I$ in $T$ and $o(I^T)=\m_T$-adic order of $I^T$.
  \end{proof}
  
  \begin{theorem}[Hoskin-Deligne Formula]
Let $(R,\m)$ be a regular local ring of dimension $2$ with infinite residue
field and let $I$ be a complete $\m$-primary ideal of $R$. Then 
$$\lm(R/I)=\sum_{T\succ R}{o(I^T)+1 \choose 2}[T/\m_T :R/\m]$$
where the sum is over all $2$-dimensional regular local rings $T$ which
birationally dominate $R$, written as $T\succ R$ and $\m_T$ is the maximal ideal
of $T$.
  \end{theorem}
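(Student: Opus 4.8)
The plan is to prove the Hoskin–Deligne formula by induction on the colength $\lm(R/I)$, using Zariski's theory of complete ideals together with the factorization of a complete $\m$-primary ideal into its transform in a quadratic transform. Since $I$ is $\m$-primary and complete, it is contracted (from any $S = R[\m/x]$ with $x \in \m \setminus \m^2$ chosen generically); write $o(I) = r$, so that $\mu(I) = r+1$. The base case is $I = \m$, where $o(\m) = 1$, the only $T$ birationally dominating $R$ with nonzero order of the transform is $R$ itself (giving $o(\m^R) = o(\m) = 1$), and the formula reads $\lm(R/\m) = \binom{2}{2}\cdot 1 = 1$, which holds.

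For the inductive step, I would choose $x \in \m \setminus \m^2$ so that $I$ is contracted from $S = R[\m/x]$, and write $IS = x^r I'S$ where $I'$ is the transform of $I$ in $S$ (Proposition 5, Appendix 5 of \cite{zs} guarantees $I'$ is again complete). The key length computation is to compare $\lm(R/I)$ with the colengths of the transforms $I'_N$ at the finitely many maximal ideals $N$ of $S$ containing $\m S$. Concretely, one shows
$$\lm(R/I) = \binom{o(I)+1}{2} + \sum_{N \supseteq \m S} \lm(S_N/I'_N)\,[S_N/N : R/\m].$$
The first term accounts for the "order" contribution at $R$ (corresponding to $T = R$ in the sum, where $o(I^R) = o(I) = r$ and $[R/\m : R/\m] = 1$), and is obtained from the filtration $I \subseteq \m^{r-1} \subseteq \cdots \subseteq \m \subseteq R$ together with the fact that $\lm(R/\m^r) = \binom{r+1}{2}$ and $\lm(\m^r/I)$ matches the colength contribution from $S$ via the exact sequence relating $I$, $x^r S$, and $I'S$. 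Since $e(I) > e(I'_N)$ for each $N$ (Proposition 3.6 of \cite{hun2}), and colength is bounded in terms of multiplicity, each $\lm(S_N/I'_N) < \lm(R/I)$, so induction applies: $\lm(S_N/I'_N) = \sum_{T \succ S_N} \binom{o(I^T)+1}{2}[T/\m_T : S_N/N]$.

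The final step is bookkeeping: by Abhyankar's theorem (\cite[Theorem 3]{a}), every $2$-dimensional regular local ring $T$ birationally dominating $R$ other than $R$ itself dominates a unique local quadratic transform $S_N = T_N$ of $R$, and the transform of $I$ satisfies $I^T = (I')^T$ at such $T$. Multiplying residue-degree factors across the tower $R \subset S_N \subset \cdots \subset T$ gives $[T/\m_T : S_N/N][S_N/N : R/\m] = [T/\m_T : R/\m]$, so summing the inductive formulas over all $N$ and adding the $T = R$ term reassembles exactly $\sum_{T \succ R} \binom{o(I^T)+1}{2}[T/\m_T : R/\m]$. The main obstacle I anticipate is the length identity in the displayed equation above — specifically, showing that $\lm(\m^r/I)$ (equivalently $\lm(R/I) - \binom{r+1}{2}$) equals $\sum_N \lm(S_N/I'_N)[S_N/N : R/\m]$; this requires carefully tracking the contraction $IS \cap R = I$ and the relation $IS = x^r I'S$ through the flat (in codimension one) map $R \to S$, and identifying $\lm(x^r S / I S)$ locally at each maximal ideal of $S$ with $\lm(S_N/I'_N)$ weighted by the appropriate residue degree. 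Everything else is formal manipulation of the point-basis combinatorics and the multiplicativity of field extension degrees along the Abhyankar tower.
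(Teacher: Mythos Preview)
The paper does not prove the Hoskin--Deligne formula: it is stated without proof, with a pointer to section 14.5 of \cite{hs} (Huneke--Swanson) and the original references \cite{hos}, \cite{del}, \cite{r2}. So there is no ``paper's own proof'' to compare against; the formula is quoted as a black box and then used to rederive Lipman's formula in the subsequent corollary.

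That said, your outline is the standard one found in those references and is essentially correct. Two small points. First, your justification that $\lm(S_N/I'_N) < \lm(R/I)$ via ``colength is bounded in terms of multiplicity'' is roundabout; the cleaner route is to observe that the displayed length identity itself, once established, forces $\sum_N \lm(S_N/I'_N)[S_N/N : R/\m] = \lm(R/I) - \binom{r+1}{2} < \lm(R/I)$ since $r \geq 1$, so each summand is strictly smaller and induction applies directly. Second, the phrase ``flat (in codimension one) map $R \to S$'' is not the right mechanism: $R \to S = R[\m/x]$ is not flat. The identity $\lm(\m^r/I) = \lm_R(S/I')$ comes instead from the contraction hypothesis $IS \cap R = I$ (giving injectivity of $\m^r/I \to \m^r S/IS$) together with the equality $\m^r S = x^r S$ and an explicit argument (using $\mu(I) = o(I)+1$) that the map is surjective; then $\m^r S/IS = x^r S/x^r I' \cong S/I'$, and one decomposes $\lm_R(S/I')$ over the finitely many maximal ideals of $S$ containing $I'$ with the residue-degree weights. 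With those adjustments your argument goes through.
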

  
\begin{corollary}
Let $I$ be a complete $\m$-primary ideal of $2$-dimensional regular local ring
$(R,\m)$. Then for all $n\geq 1$,
$$\lm(R/I^n)=e(I){n+1\choose 2}-[e(I)-\lm(R/I)]n.$$
\end{corollary}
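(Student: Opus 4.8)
The plan is to substitute each power $I^{n}$ into the Hoskin-Deligne Formula and then reorganize the resulting finite sum. Since $R$ is a two-dimensional regular local ring and $I$ is complete, Zariski's product theorem yields $\ov{I^{n}}=\ov I^{n}=I^{n}$, so every $I^{n}$ is again a complete $\m$-primary ideal and the Hoskin-Deligne Formula applies to it, giving
$$\lm(R/I^{n})=\sum_{T\succ R}{o((I^{n})^{T})+1\choose 2}[T/\m_{T}:R/\m].$$
First I would record two facts from Zariski's theory: the transform is multiplicative on complete $\m$-primary ideals, so $(I^{n})^{T}=(I^{T})^{n}$ for every $T\succ R$; and the $\m_{T}$-adic order is additive on products in the regular local ring $T$, because the associated graded ring of $T$ is a polynomial ring over $T/\m_{T}$, hence a domain. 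Together these give $o((I^{n})^{T})=n\cdot o(I^{T})$ for all $T\succ R$, so in particular the finite index set $\{T\succ R : o(I^{T})>0\}$ does not change with $n$.

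The second step is purely formal. Using the identity
$${na+1\choose 2}=n{a+1\choose 2}+{n\choose 2}a^{2},$$
valid for all integers $a,n\geq 0$ since both sides equal $(n^{2}a^{2}+na)/2$, I would substitute $a=o(I^{T})$ and sum over $T\succ R$, obtaining
$$\lm(R/I^{n})=n\sum_{T\succ R}{o(I^{T})+1\choose 2}[T/\m_{T}:R/\m]+{n\choose 2}\sum_{T\succ R}o(I^{T})^{2}[T/\m_{T}:R/\m].$$
The first sum is $\lm(R/I)$ by the Hoskin-Deligne Formula with exponent $1$. For the second sum, the displayed identity holds for every $n\geq 1$, so $\lm(R/I^{n})$ eventually coincides with a polynomial in $n$ whose coefficient of $n^{2}$ equals $\frac{1}{2}\sum_{T\succ R}o(I^{T})^{2}[T/\m_{T}:R/\m]$; on the other hand $\lm(R/I^{n})$ coincides for large $n$ with $P_{I}(n)$, whose coefficient of $n^{2}$ is $e(I)/2$. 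Hence $\sum_{T\succ R}o(I^{T})^{2}[T/\m_{T}:R/\m]=e(I)$, and therefore $\lm(R/I^{n})=n\,\lm(R/I)+{n\choose 2}e(I)$. Since ${n+1\choose 2}={n\choose 2}+n$, this rewrites as $e(I){n+1\choose 2}-[e(I)-\lm(R/I)]n$, which is the asserted formula.

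The step I expect to be the only real obstacle is the first one: one has to be sure that passing to the transform genuinely commutes with raising to powers (not merely up to the unit power of the generic element used to define the transform) and that the point basis stays finite, i.e. that $\mathcal B(I^{n})=n\,\mathcal B(I)$. This is precisely the content of Zariski's product theorem for complete ideals in a two-dimensional regular local ring, and I would simply invoke it, referring to Section 14.5 of \cite{hunsally} for a detailed account. Everything after that is manipulation of binomial coefficients.
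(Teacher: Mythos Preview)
Your proof is correct and follows essentially the same route as the paper: apply the Hoskin--Deligne formula to $I^{n}$, use $o((I^{n})^{T})=n\,o(I^{T})$ from Zariski's theory, expand the binomial coefficient as a quadratic in $n$, and read off the leading coefficient by comparison with the Hilbert polynomial. The only cosmetic difference is that the paper writes $\binom{no(T)+1}{2}=\frac{1}{2}(n^{2}o(T)^{2}+n\,o(T))$ directly rather than via your identity $\binom{na+1}{2}=n\binom{a+1}{2}+\binom{n}{2}a^{2}$, but the content is identical.
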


\begin{proof}
By Zariski's theorem $I^n$ is complete. Moreover $o((I^n)^T)=n o(I^T)$. Hence
for all $n\geq 1$,
$$\lm(R/I^n)=\sum_{T\succ R}{n o(I^T)+1 \choose 2}[T/\m_T :R/\m].$$
Let us write $[T/\m_T :R/\m]=d(T)$ and $o(I^T)=o(T)$. Then for all $n \geq 0$,
\begin{eqnarray*}
\lm(R/I^n) &=& \sum_{T\succ R}\frac{1}{2}(n^2 o(T)^2+no(T))d(T)\\
&=& \frac{1}{2}e(I)(n^2+n)-e_1(I)n+e_2(I)
\end{eqnarray*}
Hence $$e(I)=\displaystyle{\sum_{T\succ R}o(T)^2d(T)},
\;\;e_1(I)=\displaystyle{\sum_{T\succ R}{o(T)\choose 2}d(T)},
\;\; e_2(I)=0.$$
These expressions  imply that
$e_1(I)=e(I) - \lm(R/I).$
\end{proof}

The Hoskin-Deligne formula has been generalized for finitely supported 
$\m$-primary ideals in regular local rings of dimension at least three by
C. D'Cruz \cite{cl}. B. Johnston \cite {j} established
a multiplicity formula for the same class of ideals.
   
   \section{The normal Hilbert polynomial of a monomial ideal}
   Let $R=k[x_1,x_2,\ldots, x_d]$ be the polynomial ring over a field $k$ with 
   dimension $d\geq 2. $  The maximal homogeneous ideal of $R$ will be denoted
by $\m.$ Let $I=(x^{v_1},\ldots ,x^{v_q})$ be an $\m$-primary 
   monomial ideal where $v_i\in \mathbb N^d$ for $i=1,2,\ldots, q.$ If 
$w=(w_1,\ldots ,w_d)\in \mathbb N^d$ then we put $x^w=x_1^{w_1}\ldots
x_d^{w_d}.$ First we describe the integral closure of $I$ in terms of convex 
   polytopes which will lead to a formula for  the normal Hilbert polynomial of
$I$ in terms of Ehrhart polynomials of certain polytopes derived from the
exponent vectors $v_1, v_2, \ldots, v_q.$
   
  Let $e_1, e_2, \ldots, e_d$ be the standard basis vectors of $\mathbb Q^d.$ Since $I$
is an $\m$-primary ideal, there are natural numbers $a_1, a_2, \ldots,a_d$ such
that $v_i=a_ie_i$ for $i=1,2, \ldots,d$ after we have permuted the generators of
$I.$  Let ${a}=(1/a_1,1/a_2, 1/a_3, \ldots, 1/a_d).$
We may asume that  $\langle v_i, a\rangle < 1$ for $i=d+1, \ldots, s$ and
$\langle v_i, a\rangle \geq 1$ for 
   $i=s+1,  \ldots,q.$  Consider  the convex polytopes in $\mathbb Q^d.$
     \begin{eqnarray*}
   P &=& \text{conv}(v_1,v_2,\ldots,v_s) \\
   S &=& \text{conv}(0,v_1, v_2, \ldots,v_d)
   \end{eqnarray*}
   and the convex polyhedron $Q=\mathbb Q_{+}^d+\text{conv}(v_1, v_2,
\ldots,v_q).$
  B. Teissier \cite{Te} identified $\ov{I^n}$ in terms of lattice points of
$nQ$.  See also \cite{vil}. 
   \begin{proposition}\label{vill}
   With above settings  $\ov{I^n}=(\{x^a\mid a\in nQ\cap \mathbb Z^d\})$ for all
$n\in \mathbb N$.
   \end{proposition}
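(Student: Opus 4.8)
The plan is to prove the equality $\ov{I^n}=(\{x^a \mid a \in nQ \cap \mathbb{Z}^d\})$ by a two-sided containment argument, using the monomial structure throughout and the valuative description of integral closure for monomial ideals. The basic fact I would invoke is that for a monomial ideal $I$, the integral closure $\ov{I}$ is again a monomial ideal, and a monomial $x^a$ lies in $\ov{I}$ if and only if $a$ belongs to the convex region $\mathbb{Q}_+^d + \text{conv}(v_1,\ldots,v_q)$ (equivalently, $a$ satisfies the defining inequalities of the Newton polyhedron $Q$). More generally, since $I^n$ is also a monomial ideal with Newton polyhedron $nQ = \mathbb{Q}_+^d + \text{conv}(nv_1,\ldots,nv_q)$, a monomial $x^a$ lies in $\ov{I^n}$ if and only if $a \in nQ$. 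So the statement really reduces to checking that $x^a \in \ov{I^n}$ precisely when $a \in nQ \cap \mathbb{Z}^d$, together with the observation that $\ov{I^n}$, being a monomial ideal, is generated by the monomials it contains.

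First I would set up the monomial-ideal framework: record that $I^n$ is generated by $\{x^{w} : w = \sum_i c_i v_i,\ c_i \in \mathbb{N},\ \sum c_i = n\}$, and that a monomial $x^a$ lies in a monomial ideal $J$ iff $a$ lies (coordinatewise $\ge$) above one of the generating exponents. Then I would prove the Newton-polyhedron characterization of $\ov{I^n}$: if $a \in nQ$, write $a = p + \sum_i \lambda_i (n v_i)$ with $p \in \mathbb{Q}_+^d$, $\lambda_i \ge 0$, $\sum \lambda_i = 1$; clearing denominators gives an integer $m$ with $ma = mp + \sum_i (m\lambda_i)(nv_i)$ and $\sum_i (m\lambda_i) = m$, so $x^{ma} \in (I^n)^m$, whence $x^a \in \ov{I^n}$ by the standard characterization of integral closure (an element $x$ with $x^m \in J^m$ is integral over $J$). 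Conversely, if $x^a \in \ov{I^n}$, then $x^a$ satisfies an integral equation over $I^n$, which for monomials forces $x^{ma} \in (I^n)^m$ for some $m$, i.e. $ma$ lies above some exponent $\sum_i c_i(nv_i)$ with $\sum c_i = m$; dividing by $m$ shows $a \in nQ$.

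The main obstacle, and the point deserving the most care, is the passage between "$x^a$ is integral over the monomial ideal $I^n$" and the purely combinatorial statement "$x^{ma}\in (I^n)^m$ for some $m$." One clean way is: the Rees algebra $\mathcal{R}(I^n)=\bigoplus_m (I^n)^m t^m \subseteq R[t]$ is a finitely generated monomial subalgebra, its integral closure in $R[t]$ is again a monomial (indeed, normal affine semigroup) algebra, and the degree-$m$ monomials of that integral closure are exactly the $x^a t^m$ with $a \in mnQ' $ where $Q'$ is the appropriate cone/polyhedron; the identity $[\ov{\mathcal{R}(I^n)}]_m = \ov{(I^n)^m}$ then delivers the claim. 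Alternatively, one cites the well-known explicit description (e.g. as in \cite{vil}) that for a monomial ideal $I$ one has $\ov{I} = (\{x^a : a \in \text{NP}(I) \cap \mathbb{Z}^d\})$ where $\text{NP}(I) = \mathbb{Q}_+^d + \text{conv}(\text{exponents})$, and applies it verbatim to $I^n$, noting $\text{NP}(I^n) = n\,\text{NP}(I) = nQ$. With that fact in hand the proof is essentially the two containments sketched above, and the remaining verifications (that $\ov{I^n}$ is a monomial ideal, that $\text{NP}(I^n)=nQ$, that clearing denominators is legitimate) are routine. Finally I would remark that this identifies $\ov{I^n}$ with the ideal spanned by the lattice points of $nQ$, which is exactly what is needed to pass to Ehrhart polynomials in the sequel.
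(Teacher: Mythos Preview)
Your proposal is correct and follows essentially the same route as the paper: both argue by double inclusion, using that $x^a\in\ov{I^n}$ if and only if $x^{ma}\in (I^n)^m$ for some $m\geq 1$, and then pass between rational convex combinations in $nQ$ and integer exponent relations by clearing denominators. The only difference is emphasis: you flag the equivalence ``$x^a$ integral over $I^n$ $\Leftrightarrow$ $x^{ma}\in(I^n)^m$ for some $m$'' as the step needing care (and sketch a Rees-algebra justification), whereas the paper simply invokes it; otherwise the two arguments are the same computation.
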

   \begin{proof}
   Let $x^\alpha \in \ov {I^n}$ and $J=I^n$ then $x^\alpha \in \ov J$ which
implies that there exists $0\not= m\in \N$ such that $x^{m\alpha}\in J^m$. We
have $I= (x^{v_1},\ldots, x^{v_q})$. Thus $x^{m\alpha}\in (x^{v_1},\ldots,
x^{v_q})^{mn}$ which implies that $x^{m\alpha}=(x^{v_1})^{k_1}\ldots (x^{v_q})^{k_q}
x^{\beta}$ where $\sum k_i =mn$. Thus we get $m\alpha =\sum k_iv_i +\beta $
which implies $\alpha={\displaystyle \sum }\frac{k_i}{m} v_i +\frac{\beta}{m}$.
Let $l_i=\frac{k_i}{m}$ then $\alpha={\displaystyle\sum }l_iv_i +\beta' $ where
$\sum l_i=n$ and $0\leq\beta'\in \Q^d$. Thus
$\frac{\alpha}{n}={\displaystyle\sum }\frac{l_i}{n} v_i+\frac{\beta'}{n}$ where
${\displaystyle\sum} \frac{l_i}{n}=1$ which implies that $\frac{\alpha}{n}\in
\text{conv}(v_1,\ldots,v_q)+\Q_+^d=Q$. Therefore $\alpha\in nQ\cap \Z^d.$\\ 
   \indent Conversely let $\alpha \in nQ\cap\Z^d$ for some $ n \geq 0.$  Then
$\alpha= u+w$ where $u\in n \text{conv}(v_1,\ldots,v_q)$ and $w\in \Q^{d}_+$.
Therefore $\alpha=n\sum l_iv_i+w$ with $0\leq l_i\leq 1$ and $\sum l_i=1$. Let
$m=\mbox{l.c.m(denominators of }l_i)$ and $m_i=ml_i\in \Z$. Thus
$m\alpha=mw+n\sum m_iv_i$ which implies that
$x^{m\alpha}=x^{mw}((x^{v_1})^{m_1}\ldots (x^{v_q})^{m_q})^n.$ Therefore
$x^{m\alpha}\in I^{mn}$ and hence $x^\alpha\in \ov{I^n}$.
   \end{proof}
   Villarreal \cite{vil} has shown that the normal Hilbert polynomial of a
monomial ideal is the difference of two Ehrhart polynomials. First we recall the
Ehrhart polynomial of a convex polytope and some of its properties.
   \begin{theorem}[Ehrhart, 1962]
    Let $P$ be an integral  convex polytope of dimension $d.$ Then the function 
   $\chi_P(n)=|nP \cap \mathbb Z^d|$
   for $n\in \mathbb N$ is a polynomial function of degree $d$ denoted by
   $$E_P(n)=a_dn^d+\cdots +a_1n+a_0$$ with $a_i\in \mathbb Q$ for all $i$.
   \end{theorem}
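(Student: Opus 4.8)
The plan is to establish Ehrhart's theorem in two stages: first reduce from an arbitrary integral polytope to an integral simplex by triangulation and inclusion--exclusion, and then treat a simplex by the cone construction, which converts the lattice-point count into the coefficient sequence of an explicit rational function. I would fix a triangulation $P=\bigcup_{i=1}^m\Delta_i$ of $P$ into $d$-dimensional integral simplices whose vertices are among the vertices of $P$ (a pulling triangulation works). Since the $\Delta_i$ form a simplicial complex, for any nonempty $S\subseteq\{1,\dots,m\}$ the intersection $\Delta_S:=\bigcap_{i\in S}\Delta_i$ is a common face, hence again an integral simplex (or empty): of dimension $d$ if $|S|=1$, and $<d$ if $|S|\ge 2$. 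Inclusion--exclusion gives
$$\chi_P(n)=\sum_{\emptyset\neq S\subseteq\{1,\dots,m\}}(-1)^{|S|+1}\,\bigl|\,n\Delta_S\cap\Z^d\,\bigr|.$$
So once I know that the counting function $L_\Delta(n)=|n\Delta\cap\Z^d|$ of every integral $k$-simplex $\Delta$ agrees, for all $n\in\N$, with a polynomial in $n$ of degree exactly $k$ with positive rational leading coefficient, it will follow that $\chi_P$ is a $\Q$-polynomial of degree $\le d$; and since only the singleton sets $S=\{i\}$ can contribute in degree $d$, the degree-$d$ coefficient of $\chi_P$ is a sum of positive leading coefficients, hence nonzero, so $\deg\chi_P=d$.

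\textbf{The simplex case.} For $\Delta=\mathrm{conv}(u_0,\dots,u_k)$ with the $u_i\in\Z^d$ affinely independent, I would pass to $\mathbb{R}^{d+1}$, set $w_i=(u_i,1)$, and form the simplicial cone $C=\cone(w_0,\dots,w_k)$. Slicing $C$ by the hyperplane ``last coordinate $=n$'' identifies $n\Delta\cap\Z^d$ with the lattice points of $C$ of height $n$. Since $w_0,\dots,w_k$ are linearly independent, every lattice point of $C$ has a unique expression $p+\sum_i m_iw_i$ with $m_i\in\Z_{\ge 0}$ and $p$ lying in the half-open parallelepiped $\Pi=\{\sum_i\mu_iw_i:0\le\mu_i<1\}$; the set $\Pi\cap\Z^{d+1}$ is finite, say equal to $\{p_1,\dots,p_N\}$, it contains $0$, and since each $w_i$ has height $1$, the height $h_j$ of $p_j$ is an integer with $0\le h_j\le k$. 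Grouping the lattice points of $C$ by height yields
$$\sum_{n\ge 0}L_\Delta(n)\,t^n=\frac{\sum_{j=1}^N t^{h_j}}{(1-t)^{k+1}}.$$
From the expansion $t^{h}/(1-t)^{k+1}=\sum_{n\ge 0}\binom{n-h+k}{k}\,t^n$ together with the bound $0\le h_j\le k$, one checks that the polynomial $\binom{n-h_j+k}{k}=\tfrac1{k!}(n-h_j+1)(n-h_j+2)\cdots(n-h_j+k)$ already vanishes for $0\le n<h_j$, because the $k$ consecutive integers occurring as factors then straddle $0$. Hence
$$L_\Delta(n)=\sum_{j=1}^N\binom{n-h_j+k}{k}$$
is a genuine polynomial identity in $n$, with rational coefficients, degree $\le k$, and leading coefficient $N/k!>0$ (since $N\ge 1$); so the degree is exactly $k=\dim\Delta$, which completes the reduction and hence the theorem.

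\textbf{The main obstacle.} The delicate step is not the triangulation bookkeeping but the passage from ``eventually polynomial'' — which is automatic once one has a rational generating function with denominator $(1-t)^{k+1}$ — to ``polynomial for every $n\in\N$''. This is precisely what the half-open-parallelepiped decomposition delivers, through the bound $h_j\le k$ on the finitely many extra lattice points $p_j$; the same bound is what prevents the top-degree term from cancelling. Rationality of the coefficients is then automatic, being inherited from the binomials $\binom{n-h_j+k}{k}$ with integer $h_j$. The one point in the polytope step one should not skip is the verification that the degree-$d$ contributions to $\chi_P$ come only from the top-dimensional simplices $\Delta_i$, so that the leading term of $\chi_P$ genuinely survives; this uses that in a triangulation any two distinct facets meet along a face of strictly smaller dimension.
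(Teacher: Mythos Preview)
Your argument is correct and is essentially the standard modern proof of Ehrhart's theorem via the cone method (as in Stanley's \emph{Enumerative Combinatorics} or Beck--Robins). The key technical points --- that the half-open parallelepiped produces numerator exponents $h_j\le k$, that $\binom{n-h_j+k}{k}$ vanishes for $0\le n<h_j$ so the polynomial identity holds from $n=0$, and that only the top-dimensional simplices contribute in degree $d$ after inclusion--exclusion --- are all handled correctly.

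Note, however, that the paper does not actually prove this theorem: it is quoted as a classical result of Ehrhart (1962), stated without proof along with a list of standard properties, and then used as a tool to express the normal Hilbert polynomial of a monomial ideal as a difference of Ehrhart polynomials. So there is no ``paper's own proof'' to compare against; your write-up supplies a complete argument where the survey simply cites one.
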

   The polynomial $E_P(n)$ is called the {\bf Ehrhart polynomial} of $P$. Some
well known properties of $E_P$ are
   \begin{enumerate}
    \item  Let  $\vol (P)$ denote the relative volume of $P$. Then $a_d=\vol
(P).$ 
    \item  Suppose  $F_1,\ldots F_s$ are facets of $P$. Then $a_{d-1}=\left(
\sum_{i=1}^s\vol(F_i)\right)/2 .$ 
    \item  We have $\chi_P(n)=E_P(n)$ for all integers $n\geq 0$. 
   \end{enumerate}
   Moral\'es first proved the following result in \cite{m2} later Villarreal
\cite{vil} has given another proof. 
   \begin{theorem}\cite[Proposition 3.6]{vil} Let $I$ be an $\m$-primary
monomial ideal of the polynomial ring $k[x_1,x_2, \ldots,x_d]$ over a field $k.$
Then 
    $$\lm(R/\ov{I^n})=|\mathbb N^d \setminus nQ|= E_S(n)-E_P(n)$$ for all $n.$
In particular
   $$\ov P_I(n)=[\vol(S)-\vol(P)]n^d+\text{lower degree terms}$$
   and the constant term of $\ov P_I(x) $ is zero.
   \end{theorem}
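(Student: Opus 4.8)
The plan is to combine Proposition~\ref{vill} with a lattice-point count and then to read off the leading and constant terms from the stated properties of Ehrhart polynomials.

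The first equality is essentially a restatement of Proposition~\ref{vill}. Since $Q\subseteq\Q_{+}^{d}$, the set $nQ\cap\Z^{d}$ lies in $\N^{d}$, and it is stable under adding vectors of $\N^{d}$ (because $Q$ is stable under translation by $\Q_{+}^{d}$); hence the monomial ideal $\ov{I^{n}}=(\{x^{\alpha}:\alpha\in nQ\cap\Z^{d}\})$ has monomial $k$-basis $\{x^{\alpha}:\alpha\in nQ\cap\Z^{d}\}$, so the residues of the $x^{\alpha}$ with $\alpha\in\N^{d}\setminus nQ$ form a $k$-basis of $R/\ov{I^{n}}$. As $\ov{I^{n}}$ is $\m$-primary this basis is finite and $\lm(R/\ov{I^{n}})=|\N^{d}\setminus nQ|$.

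The main point is the identity $nS\cap\Z^{d}=(nP\cap\Z^{d})\ \sqcup\ (\N^{d}\setminus nQ)$ for every integer $n\ge 1$; granting it, the third listed property of Ehrhart polynomials gives $|\N^{d}\setminus nQ|=|nS\cap\Z^{d}|-|nP\cap\Z^{d}|=E_{S}(n)-E_{P}(n)$. To prove the identity, set $a=(1/a_{1},\ldots,1/a_{d})$ and $R_{0}:=\{\,y\ge 0:\langle y,a\rangle=1\,\}$. One checks directly that $R_{0}=\text{conv}(a_{1}e_{1},\ldots,a_{d}e_{d})=\text{conv}(v_{1},\ldots,v_{d})\subseteq P$, and that every $y\ge 0$ with $\langle y,a\rangle\ge 1$ decomposes as $y=\frac{1}{\langle y,a\rangle}\,y+\bigl(1-\frac{1}{\langle y,a\rangle}\bigr)y$ with the first summand in $R_{0}$ and the second in $\Q_{+}^{d}$. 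Two consequences: first, $Q=P+\Q_{+}^{d}$, because $v_{i}\in P$ for $i\le s$ while $\langle v_{i},a\rangle\ge 1$ for $i>s$ forces $v_{i}\in R_{0}+\Q_{+}^{d}\subseteq P+\Q_{+}^{d}$; second, $\Q_{+}^{d}\setminus Q\subseteq S$, because $\{y\ge 0:\langle y,a\rangle\ge 1\}\subseteq Q$ forces every $y\in\Q_{+}^{d}\setminus Q$ to satisfy $\langle y,a\rangle<1$. The crucial containment is $S\cap Q\subseteq P$ (the reverse being clear). Given $x\in S\cap Q$, if $\langle x,a\rangle=1$ then $x\in R_{0}\subseteq P$; otherwise $0<\langle x,a\rangle<1$ and, by $Q=P+\Q_{+}^{d}$, write $x=p+w$ with $p\in P$, $w\in\Q_{+}^{d}$, so $\langle p,a\rangle\le\langle x,a\rangle<1$. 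Put $\mu:=(1-\langle x,a\rangle)/(1-\langle p,a\rangle)\in(0,1]$: if $\mu=1$ then $x=p\in P$; if $\mu<1$ then $r:=(x-\mu p)/(1-\mu)$ satisfies $r\ge 0$ (as $x\ge p\ge\mu p$) and $\langle r,a\rangle=1$, so $r\in R_{0}\subseteq P$ and $x=\mu p+(1-\mu)r\in P$. Finally, scaling by $n$ and using that $\Q_{+}^{d}$ is a cone (so $\Q_{+}^{d}\setminus nQ=n(\Q_{+}^{d}\setminus Q)$), we obtain $nS\cap nQ=nP$ and $\Q_{+}^{d}\setminus nQ\subseteq nS$; hence $nS=(nS\cap nQ)\sqcup(nS\setminus nQ)=nP\sqcup(\Q_{+}^{d}\setminus nQ)$, and intersecting with $\Z^{d}$ gives the asserted partition.

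For the ``in particular'' part, write $\ov P_{I}(x)=E_{S}(x)-E_{P}(x)$. Since $S$ is a $d$-dimensional simplex, $\deg E_{S}=d$ with leading coefficient $\vol(S)>0$, while $\deg E_{P}=\dim P\le d$; hence $\deg\ov P_{I}=d$ with leading coefficient $\vol(S)-\vol(P)$ (the $d$-dimensional volume of $P$, which is $0$ when $\dim P<d$), consistent with $\ov e_{0}(I)=e(I)=d!\,(\vol(S)-\vol(P))$. The constant term of $\ov P_{I}$ is $E_{S}(0)-E_{P}(0)=1-1=0$, since $E_{P}(0)=|0\cdot P\cap\Z^{d}|=1=E_{S}(0)$ by the same property. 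The only genuinely geometric step is $S\cap Q\subseteq P$: this is where the $\m$-primary hypothesis enters — it makes $R_{0}=\text{conv}(v_{1},\ldots,v_{d})$ a facet of $S$ lying inside $P$ — and the convex-combination argument above is its crux; everything else is routine bookkeeping with Proposition~\ref{vill} and the Ehrhart machinery recalled in the excerpt.
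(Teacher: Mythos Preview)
Your proof is correct and follows the same route as the paper: both reduce to the set identity $nS=nP\sqcup(\Q_{+}^{d}\setminus nQ)$ (equivalently, the paper's decomposition $Q=(\Q_{+}^{d}\setminus S)\cup P$) and then apply Ehrhart. The difference is only in level of detail: the paper merely asserts this decomposition, whereas you supply a genuine proof by establishing $Q=P+\Q_{+}^{d}$, $\Q_{+}^{d}\setminus Q\subseteq S$, and the key inclusion $S\cap Q\subseteq P$ via the convex-combination argument with $R_{0}$; you also correctly compute the constant term as $E_{S}(0)-E_{P}(0)=1-1=0$ (the paper's ``$E_{S}(0)=E_{P}(0)=0$'' is a slip).
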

   \begin{proof}
   Since $\lm(R/\ov{I^n})=dim_k(R/\ov{I^n})$, by Proposition \ref{vill}, we have 
$\lm(R/\ov{I^n})=|\mathbb N^d \setminus nQ|$. As $E_S(0)=E_P(0)=0$, we get the
equality at $n=0$. Now assume $n\geq 1$. Notice that we can decompose $Q$ as 
$Q=(\Q^d_+\setminus S)\cup P$. Thus we get $$nQ=(\Q^d_+\setminus nS)\cup nP
\Longrightarrow \N^d\setminus nQ=[\N^d\cap (nS)]\setminus [\N^d\cap (nP)].$$
   Hence we get $\lm(R/\ov{I^n})=E_S(n)-E_P(n)$. Using the properties of Ehrhart
polynomial we can write
   $$\ov P_I(n)=[\vol(S)-\vol(P)]n^d+\text{lower degree terms}$$ with the
constant term zero.
   \end{proof}
   
   It follows from a result of Marley \cite{m} that  not only the normal Chern
number but all the coefficients of the normal Hilbert polynomial of a  monomial
ideal in a polynomial ring are non-negative. We  present a different proof of
this theorem.
    \begin{theorem} Let $I$ be a zero-dimensional monomial ideal of a
    polynomial ring $R=k[x_1,x_2,\ldots,x_d]$ over a field $k$. Then
    $\ov{e}_i(I) \geq 0$ for all $i=0,1,2,\ldots, d-1$.
    \end{theorem}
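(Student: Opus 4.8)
The plan is to prove the stronger statement that the normal associated graded ring $\ov G(I)=\bigoplus_{n\ge 0}\ov{I^n}/\ov{I^{n+1}}$ is Cohen--Macaulay, and then to deduce $\ov e_i(I)\ge 0$ from the non-negativity of its $h$-vector. It is precisely here that the hypothesis that $I$ is a monomial ideal is used, and it is what lets the argument handle all dimensions at once, going past the piecemeal bounds $\ov e_1,\ov e_2,\ov e_3\ge 0$ of the earlier sections. Since for a monomial ideal the lengths $\lm(R/\ov{I^n})$ and the ideals $\ov{I^n}$ are insensitive to an extension of the base field, I would first reduce to the case that $k$ is infinite.

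To show $\ov G(I)$ is Cohen--Macaulay I would realize it as a quotient of a normal semigroup ring. The extended Rees algebra $\mathcal R(I)=\bigoplus_{n\in\Z}I^nt^n\subseteq R[t,t^{-1}]$ (with $I^n=R$ for $n\le 0$) is generated over $k$ by the monomials $x_1,\dots,x_d$, $t^{-1}$ and $x^{v_1}t,\dots,x^{v_q}t$, hence is an affine semigroup ring, and its integral closure in $R[t,t^{-1}]$ is $\ov{\mathcal R}(I)=\bigoplus_{n\in\Z}\ov{I^n}t^n$. By Proposition~\ref{vill} each $\ov{I^n}$ is again a monomial ideal, so $\ov{\mathcal R}(I)$ is a \emph{normal} affine semigroup ring, hence Cohen--Macaulay by Hochster's theorem (see \cite{bh}); it has dimension $d+1$. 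The element $u=t^{-1}$ is a nonzerodivisor on $\ov{\mathcal R}(I)$ and $\ov{\mathcal R}(I)/(u)\cong\ov G(I)$, so $\ov G(I)$ is a Cohen--Macaulay standard graded algebra of dimension $d$ over the Artinian ring $R/\ov I$.

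Finally I would pass from Cohen--Macaulayness to the numerical conclusion. Write $\ov F_I(t)=h(t)/(1-t)^d$ with $h(t)=h_0+h_1t+\dots+h_st^s\in\Z[t]$. A homogeneous linear system of parameters for $\ov G(I)$ exists because $k$ is infinite, and it is a regular sequence since $\ov G(I)$ is Cohen--Macaulay; the corresponding Artinian quotient has Hilbert series $h(t)$, so $h_j\ge 0$ for all $j$. By the Moral\'es--Villarreal formula $\lm(R/\ov{I^n})=E_S(n)-E_P(n)$ one has $\ov H_I(n)=\ov P_I(n)$ for all $n\ge 0$ and $\ov P_I$ has zero constant term, i.e. $\ov e_d(I)=0$ and $s\le d-1$. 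Summing $\ov H_I(n)=\sum_{m=0}^{n-1}[t^m]\ov F_I(t)$ with the hockey-stick identity then gives $\ov P_I(x)=\sum_{j=0}^{s}h_j\binom{x+d-1-j}{d}$; expanding each $\binom{x+d-1-j}{d}$ in the basis $\binom{x+d-1-i}{d-i}$ (whose terms are linearly independent, having pairwise distinct degrees $d,d-1,\dots,0$) and comparing with $\ov P_I(x)=\sum_{i=0}^{d}(-1)^i\ov e_i(I)\binom{x+d-1-i}{d-i}$ yields $\ov e_i(I)=\sum_{j\ge i}\binom{j}{i}h_j\ge 0$ for $i=0,1,\dots,d-1$.

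The step I expect to be the main obstacle is the Cohen--Macaulayness of $\ov G(I)$ --- concretely, the two supporting facts that $\ov{\mathcal R}(I)=\bigoplus_n\ov{I^n}t^n$ is exactly the normalization of the semigroup ring $\mathcal R(I)$ and that cutting by $u=t^{-1}$ preserves Cohen--Macaulayness; once these are in place the rest is routine. One might instead try to argue purely from the Ehrhart description $\ov P_I(n)=E_S(n)-E_P(n)$ via Stanley's non-negativity of $h^{*}$-polynomials together with $P\subseteq S$, but the possibility $\dim P<d$ makes the comparison of the two $h^{*}$-polynomials delicate, so the semigroup-ring route looks cleaner.
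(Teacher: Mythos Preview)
Your proof is correct and follows essentially the same route as the paper: realize the normal Rees algebra as a normal affine semigroup ring, apply Hochster's theorem to obtain Cohen--Macaulayness, deduce that $\ov G(I)$ is Cohen--Macaulay, and conclude $\ov e_i(I)=\sum_{j}\binom{j}{i}h_j\ge 0$ from the non-negativity of the $h$-vector. The only differences are cosmetic: the paper works with the ordinary Rees algebra $\ov{\mathcal R}_+(I)$ and passes to $\ov G(I)$ by citing \cite{viet} rather than via the extended Rees algebra modulo $t^{-1}$, and it bounds $\deg h\le d-1$ using the Brian\c{c}on--Skoda theorem instead of the Moral\'es--Villarreal Ehrhart description (in either argument this degree bound is extra information, not needed for the inequality $\ov e_i\ge 0$).
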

    
    \begin{proof}
      We may assume without loss of generality that $k$ is infinite.  The
integral closure of a monomial ideal is a monomial ideal by \cite[Proposition
1.4.6]{hs}. Hence  The Rees algebra 
    $\ov{\mathcal {R}}= \ov{\mathcal
{R}}({I})=\bigoplus_{n=0}^{\infty}\ov{I^n}t^n$ is a normal semigroup ring. Hence
by Hochster's theorem \cite[Theorem 6.3.5]{bh} $\ov{\mathcal{R}}$ 
    is Cohen-Macaulay. Therefore the associated graded ring $\ov G=\ov
G(I)=\oplus_{n=0}^{\infty}\ov{I^n}/\ov{I^{n+1}}$ is Cohen-Macaulay by
\cite{viet}.  Let $J$ be a minimal reduction of $I$. 
   Then the initial forms of generators of $J$ in degree one component
    of $\ov G$  form a $\ov G$-regular sequence. Hence
     \begin{eqnarray*} 
     H(\ov G/J\ov G,z)&=& (1-z)^d H(\ov G,z)\\
             &=& \lm(R/\ov{I}) + \lm(\ov{I}/J+\ov{I^2})z+ \cdots
+\lm(\ov{I^{d-1}}/J\ov{I^{d-2}}+\ov{I^d})z^{d-1}\\
     &:=& f(z)
    \end{eqnarray*}
    In the above calculation we have used the celebrated theorem of
Brian\c{c}on-Skoda Theorem \cite[ Theorem 13.3.3]{hs} which asserts that 
$\ov{I^{n+d}} \subseteq I^{n+1}$ for all $n \geq 0.$ Since for $i=1,\ldots ,d-1$
$$ i \; ! \;\ov{e}_i(I)=\frac{d^if(z)}{dz^i}\left|\right._{z=1}.$$ 
    Therefore $\ov e_i(I)\geq 0$ for $i=0,1,\ldots ,d-1$.
    \end{proof}

\end{document}